\newcites{sec}{References}
\theoremstyle{plain}
\newtheorem{theorem}{Theorem}
\newtheorem{lemma}[theorem]{Lemma}
\newtheorem{corollary}[theorem]{Corollary}
\newtheorem{proposition}[theorem]{Proposition}
\theoremstyle{remark}
\newtheorem{example}[theorem]{Example}
\numberwithin{equation}{section}
\numberwithin{theorem}{section}
\newtheorem*{assumption*}{\assumptionnumber}
\providecommand{\assumptionnumber}{}
\newenvironment{myassumption}[1]
 {%
  \renewcommand{\assumptionnumber}{Assumption #1}%
  \begin{assumption*}%
  \protected@edef\@currentlabel{#1}%
 }
 {%
  \end{assumption*}
 }
\newcommand*\Norml{\mathop{}\!\mathbin\lVert}
\newcommand*\Normr{\mathop{}\!\mathbin\rVert}
\newcommand*\at{\mathop{}\!\mathbin a_{\vartheta}}
\renewcommand{\sc}[2]{\langle#1,#2\rangle}
\newcommand{\norm}[1]{\lVert#1\rVert}
\let\P\relax
\DeclareMathOperator{\P}{{\mathbb P}}
\DeclareMathOperator{\E}{{\mathbb E}}
\DeclareMathOperator{\R}{{\mathbb R}}
\DeclareMathOperator{\N}{{\mathbb N}}
\renewcommand{\theta}{\vartheta}    
\renewcommand{\epsilon}{\varepsilon}
\numberwithin{equation}{section}
\numberwithin{theorem}{section}
\begin{document}

\begin{frontmatter}
\title{Optimal parameter estimation for linear SPDEs from multiple measurements}
\runtitle{Optimal parameter estimation for linear SPDEs}

\begin{aug}
%
%
\author[A]{\fnms{Randolf}~\snm{Altmeyer}\ead[label=e1]{ra591@cam.ac.uk}},
\author[B]{\fnms{Anton}~\snm{Tiepner}\ead[label=e2]{tiepner@math.au.dk}}
 \and
\author[C]{\fnms{Martin}~\snm{Wahl}\ead[label=e3]{martin.wahl@math.uni-bielefeld.de}}
%
%
\address[A]{Department of Pure Mathematics and Mathematical Statistics, University of Cambridge\printead[presep={,\ }]{e1}}
\address[B]{Department of Mathematics, Aarhus University\printead[presep={,\ }]{e2}}
\address[C]{Fakult\"{a}t f\"{u}r Mathematik, Universit\"{a}t Bielefeld\printead[presep={,\ }]{e3}}
\end{aug}

\begin{abstract}
The coefficients in a second order parabolic linear stochastic partial differential equation (SPDE) are estimated from multiple spatially localised measurements. Assuming that the spatial resolution tends to zero and the number of measurements is non-decreasing, the rate of convergence for each coefficient depends on its differential order and is faster for higher order coefficients. Based on an explicit analysis of the reproducing kernel Hilbert space of a general stochastic evolution equation, a Gaussian lower bound scheme is introduced. As a result, minimax optimality of the rates as well as sufficient and necessary conditions for consistent estimation are established.
\end{abstract}

\begin{keyword}[class=MSC]
\kwd[Primary ]{60H15}
\kwd{60F05}
\kwd[; secondary ]{62F12, 62F35}
\end{keyword}

\begin{keyword}
\kwd{linear SPDEs}
\kwd{parameter estimation}
\kwd{central limit theorem}
\kwd{minimax lower bound} 
\kwd{reproducing kernel Hilbert space} 
\kwd{local measurements}
\end{keyword}

\end{frontmatter}


	\section{Introduction}
    Stochastic partial differential equations (SPDEs) form a flexible class of models for space-time data. They combine phenomena such as diffusion and transport that occur naturally in many processes, but also include random forcing terms, which may arise from microscopic scaling limits or account for model uncertainty. Quantifying the size of these different effects is an important step in model validation. 
    
    Suppose that $X=(X(t))_{0\leq t\leq T}$ solves the linear parabolic SPDE 
    \begin{equation}
		\label{eq:SPDE}
			dX(t) = A_\theta X(t)dt+ dW(t), \quad 0\leq t\leq T,
	\end{equation}
	on an open, bounded and smooth domain $\Lambda\subset\mathbb{R}^d$ with some initial value $X_0$, a space-time white noise $dW$ and a second order elliptic operator
	\begin{align}
	    A_{\theta} = \sum_{i=1}^p \theta_i A_{i} + A_0\label{eq:generalA}
	\end{align}
    satisfying zero Dirichlet boundary conditions. The $A_i$ are known differential operators of differential order $n_i\in \{0,1,2\}$ and we aim at recovering the unknown parameter $\theta\in\R^p$. A prototypical example is 
	\begin{align}\label{eq:canonical:model}
        A_{\theta}=\theta_1\Delta+\theta_2(\nabla\cdot b)+\theta_3,\qquad \theta\in(0,\infty)\times\R\times(-\infty,0],
    \end{align}
    with diffusivity, transport and reaction coefficients $\theta_1$, $\theta_2$, $\theta_3$ in front of the Laplace operator $\Delta$ and the divergence operator $\nabla \cdot$ such that $n_1=2$, $n_2=1$, $n_3=0$, with a known unit velocity vector $b\in\R^d$. The general form in \eqref{eq:generalA} allows for wide range of models affected by a mixture of different, possibly anisotropic, mechanisms.  Equations such as \eqref{eq:SPDE} are also called stochastic advection–diffusion equations and often serve as building blocks for more complex models, with applications in different areas such as neuroscience \cite{ Tuckwell2013, sauer_analysis_2016, walsh_stochastic_1981}, biology \cite{altmeyer_parameter_2020, alonso2018modeling}, spatial statistics \cite{sigrist_stochastic_2015, liu_statistical_2021} and data assimilation \cite{llopis2018particle}.  For concrete examples of \eqref{eq:generalA} with a mixture of known and unknown model coefficients from fluid dynamics and engineering see \cite{karalashvili2011, luce2013, catania2006}.

     While the estimation of a scalar parameter in front of the highest order operator $A_i$ is well studied in the literature \cite{huebner_asymptotic_1995, kriz_central_2018, cialenco_drift_2019, cialenco2021statistical,  gugushvili2020bayesian}, there is little known about estimating the lower order coefficients or the full multivariate parameter $\theta$. Relying on discrete space-time observations $X(t_k,x_j)$ in case of $\eqref{eq:canonical:model}$ and in dimension $d=1$, \cite{bibinger_volatility_2020, hildebrandt_parameter_2019, tonaki2022parameter} have analysed power variations and contrast estimators. For two parameters in front of operators $A_1$ and $A_2$, \cite{lototsky_parameter_2003} computed the maximum likelihood estimator from $M$ spectral measurements $(\sc{X(t)}{e_j})_{0\leq t\leq T}$, $j=1,\dots,M$, where the $e_j$ are the eigenfunctions of $A_{\theta}$ and $\sc{\cdot}{\cdot}$ is the inner product on $L^2(\Lambda)$. This leads as $M\rightarrow\infty$ to rates of convergence depending on the differential order of the operators $A_1$, $A_2$, but is restricted to domains and diagonalisable operators with known $e_j$, independent of $\theta$. In particular, in the spectral approach there is no known estimator for the transport coefficient $\theta_2$ in \eqref{eq:canonical:model}. Estimators for nonlinearities or noise specifications are studied e.g. by \cite{chong_high-frequency_2020, hildebrandt2021nonparametric, gaudlitz2022estimation, benth2022weak}.
     
     In contrast, we construct an estimator $\hat{\theta}_{\delta}$ of $\theta$ on general domains and with arbitrary possibly anisotropic $A_{\theta}$ from local measurement processes $X_{\delta,k}=(\sc{X(t)}{K_{\delta,x_k}})_{0\leq t\leq T}$, $X^{A_i}_{\delta,k}=(\sc{X(t)}{A^*_i K_{\delta,x_k}})_{0\leq t\leq T}$ for $i=0,\dots,p$ and locations $x_1,\dots,x_M\in\Lambda$. The $K_{\delta,x_k}$, also known as \emph{point spread functions} in optical systems \cite{aspelmeier_modern_2015,backer_extending_2014}, are compactly supported functions on subsets of $\Lambda$ with radius $\delta>0$ and centred at the $x_k$. They are part of the observation scheme and describe the physical limitation that point sources $X(t_k,x_j)$ can only be measured up to a convolution with the point spread function. Local measurements were introduced in a recent contribution by \cite{altmeyer_nonparametric_2020} to demonstrate that a nonparametric diffusivity can already be identified at $x_k$ from the spatially localised information $X_{\delta,k}$ as $\delta\rightarrow 0$ with $T>0$ fixed. See \cite{altmeyer_parameterSemi_2020} for robustness to semilinear perturbations and different noise configurations besides space-time white noise. For more details on practical aspects of local measurements, as well as a concrete example from cell biology \cite{altmeyer_parameter_2020}, see Section \ref{sec:Practical_aspects} below.

   Let us briefly describe our main contributions. Our first result extends the augmented MLE $\hat{\theta}_{\delta}$ and the CLT of \cite{altmeyer_nonparametric_2020} to $M= M(\delta)$ measurements and joint asymptotic normality of
    \begin{equation*}
		  (M^{1/2}\delta^{1-n_i}(\hat{\theta}_{\delta,i}-\theta_{i}))_{i=1}^p,\quad \delta\rightarrow 0.
	\end{equation*}
    This yields the convergence rates $M^{1/2}\delta^{1-n_i}$ for $\vartheta_i$, with the fastest rate for diffusivity terms with $n_i=2$ and the slowest rate for reaction terms with $n_i=0$. We then turn to the  problem of establishing optimality of these rates in case of \eqref{eq:canonical:model}. We compute the reproducing kernel Hilbert space (RKHS) of the Gaussian measures induced by the laws of $X$ and of the local measurements. From this we derive minimax lower bounds, implying that the rates in the CLT are indeed optimal, and provide conditions under which consistent estimation is impossible. 
    Combined with our CLT we deduce for general point spread functions $K_{\delta,x_k}$ with non-intersecting supports that consistent estimation is possible if and only if $M^{1/2}\delta^{1-n_i}\rightarrow\infty$. Since $M$ is at most of order $\delta^{-d}$, reaction terms cannot be estimated when $d=1$.
    
    Conceptually, spectral measurements can be obtained approximately from local measurements on a dense grid over the entire domain by a discrete Fourier transform and we recover the rates of convergence of \cite{huebner_asymptotic_1995} by taking $M$ of maximal order $\delta^{-d}$. 
	
    The information geometry underlying local measurements is complex due to the non-linear dependence of the solution $X$ on $\theta$ (cf. \eqref{eq:weakSolution}) and the non-Markovian dynamics of the processes $X_{\delta,k}$, $X^{A_i}_{\delta,k}$. This leads to a non-explicit likelihood function, making standard MLE-based estimation and optimality results for continuously observed diffusion processes \cite{kutoyants_statistical_2013} non-applicable in this context. Instead, we introduce a novel lower bound scheme for Gaussian measures, which exploits that the Hellinger distance of their laws can be related to properties of their RKHS. This is different from the lower bound approach of \cite{altmeyer_nonparametric_2020} for $M=1$ and paves the way to rigorous lower bounds for each coefficient and an arbitrary number of measurements. One of our key results states that the RKHS of the Gaussian measure induced by $X$ with $A_\vartheta=\Delta$ consists of all absolutely continuous $h\in L^2([0,T];L^2(\Lambda))$ with $\Delta h,h'\in L^2([0,T];L^2(\Lambda))$ and its squared RKHS norm equals
    \begin{align*}
	     \quad & \|\Delta h\|_{L^2([0,T];L^2(\Lambda))}^2 +\|h'\|_{L^2([0,T];L^2(\Lambda))}^2 +\|(-\Delta)^{1/2} h(0)\|^2_{L^2(\Lambda)}+\|(-\Delta)^{1/2} h(T)\|^2_{L^2(\Lambda)}.
	\end{align*}
	This surprisingly simple formula generalises the finite-dimensional Ornstein-Uhlenbeck case \cite{MR3024389}, and provides a route to obtain the RKHS of local measurements as linear transformations of $X$. To the best of our knowledge the RKHS of $X$ has not been stated before in the literature, and may be of independent interest, e.g. in constructing Bayesian procedures with Gaussian process priors, cf. \cite{van_der_vaart_rates_2008}.
	
    The paper is organised as follows. Section \ref{sec:main} deals with the local measurement scheme, the construction of our estimator and the CLT. Section \ref{Sec:RKHS:SPDE} addresses the RKHS of $X$ and of the local measurements, while Section \ref{sec:optimality} presents the lower bounds for the rates established in the CLT. Section \ref{sec:examples} covers model examples, the boundary case for estimating zero order terms in $d=2$ and some practical aspects. All proofs are deferred to Section \ref{sec:proofs} and to the Appendix \ref{app:additional:proofs}. 
    
    \subsection*{Basic notation}
	
	Throughout the paper, we work on a filtered probability space $(\Omega, \mathcal{F},(\mathcal{F}_t)_{0\leq t\leq T},\P)$. We write $a\lesssim b$ if $a\leq Cb$ for a universal constant $C$ not depending on $\delta$, but possibly depending on other quantities such as $T$ and $\Lambda$. Unless stated otherwise, all limits are understood as  $\delta\rightarrow 0$ with non-decreasing $M=M(\delta)$ possibly depending on~$\delta$. 
	
	The Euclidean inner product and distance of two vectors $a,b\in\R^p$ is denoted by $a\cdot b$ and $|b-a|$, $I_{p\times p}$ is the identity matrix in $\R^{p\times p}$. We write $\|\cdot\|_{\operatorname{op}}$ for the operator norm of a matrix. For an open set $U\subset\R^d$ and $p\geq 1$, $L^p(U)$ is the usual $L^p$-space with norm $\norm{\cdot}_{L^p(U)}$ and the inner product on $L^2(U)$ is denoted $\sc{\cdot}{\cdot}_{L^2(U)}$. We write $\sc{\cdot}{\cdot}=\sc{\cdot}{\cdot}_{L^2(\Lambda)}$, $\norm{\cdot}=\norm{\cdot}_{L^2(\Lambda)}$. Let $H^k(U)$ denote the usual Sobolev spaces and let $H_0^1(U)$ be the completion of the space of smooth compactly supported functions $C_c^{\infty}(U)$  relative to the $H^1(U)$-norm. 
 
    We write $D_i$, $D_{ij}$ for partial derivatives. The gradient and Laplace operators are $\nabla$, $\Delta=\sum_{i=1}^d D_{ii}$. The divergence of a $d$-dimensional vector field $v$ is $\nabla\cdot v=\sum_{i=1}^d D_i v_i$. The Laplace operator $\Delta$ will be considered with domain $H_0^1(\Lambda)\cap H^2(\Lambda)$, while with domain $H^2(\R^d)$ it will be denoted by $\Delta_0$.
	
	For a Hilbert space $\mathcal{H}$, the space $L^2([0,T];\mathcal{H})$ consists of all measurable functions $h:[0,T]\rightarrow \mathcal{H}$ with $\int_0^T\|h(t)\|_{\mathcal{H}}^2dt<\infty$. We write $\norm{T}_{\operatorname{HS}(\mathcal{H}_1,\mathcal{H}_2)}$ for the Hilbert-Schmidt norm of a linear operator $T:\mathcal{H}_1\rightarrow \mathcal{H}_2$ between two Hilbert spaces $\mathcal{H}_1,\mathcal{H}_2$.
    
    \section{Joint parameter estimation}\label{sec:main}
	
    \subsection{Setup}\label{sec:setup}

    Let $\theta\in\Theta\subset\R^p$ be an unknown parameter. For $i=0,\dots,p$, suppose that the operators in \eqref{eq:generalA} are of the form $A_i = \nabla \cdot a^{(i)} \nabla +  \nabla\cdot b^{(i)} + c^{(i)}$ for symmetric $a^{(i)}\in\R^{d\times d}$, $b^{(i)}\in\R^d$ and $c^{(i)}\in\R$, where for each $i=1,\dots,p$ only one of the coefficients $a^{(i)}$, $b^{(i)}$, $c^{(i)}$ is non-vanishing. For each $A_i$, the formal adjoint is $A_i^* = \nabla \cdot a^{(i)} \nabla - \nabla\cdot b^{(i)} + c^{(i)}$, and its differential order $n_i= \operatorname{ord}(A_i)\in\{0,1,2\}$ is the number of non-vanishing derivatives. With $a_{\theta}=\sum_{i=1}^p \theta_i a^{(i)} + a^{(0)}$, $b_{\theta}=\sum_{i=1}^p \theta_i b^{(i)} + b^{(0)}$ and $c_{\theta}=\sum_{i=1}^p \theta_i c^{(i)} + c^{(0)}$, \eqref{eq:generalA} gives $$A_{\theta} = \nabla \cdot a_{\theta} \nabla + \nabla\cdot b_{\theta} + c_{\theta}.$$We suppose that $a_{\theta}$ is positive definite for all $\theta\in\Theta$. Then $A_{\theta}$ is a strongly elliptic operator and generates with domain $H^1_0(\Lambda)\cap H^2(\Lambda)$ an analytic semigroup $(S_{\theta}(t))_{t\geq 0}$ on $L^2(\Lambda)$ \cite{lunardi_analytic_1995}. Considered with the same domain, the adjoint $A_{\theta}^*=\sum_{i=1}^p \theta_i A_i^*+A_0^*$ generates the adjoint semigroup $(S_{\theta}^*(t))_{t\geq 0}$ \cite[Section 2.5.3]{yagi_abstract_2009}.
    
    With an $\mathcal{F}_0$-measurable initial value $X_0$ and a cylindrical Wiener process $W$ on $L^2(\Lambda)$ define a process $X=(X(t))_{0\leq t\leq T}$ by
	\begin{align}
	    X(t) = S_{\theta}(t)X_0 + \int_0^t S_{\theta}(t-t')dW(t'),\quad 0\leq t\leq T.\label{eq:weakSolution}
	\end{align}
	Due to the low spatial regularity of $W$ this process is understood as a random element with values in $L^2(\Lambda)\subset\mathcal{H}_1$ almost surely for a larger Hilbert space $\mathcal{H}_1$ with an embedding $\iota:L^2(\Lambda)\rightarrow\mathcal{H}_1$ such that $\int_0^t\norm{\iota S_{\theta}(t')}_{\operatorname{HS}(L^2(\Lambda),\mathcal{H}_1)}^2dt'<\infty$ \cite[Remark 6.6]{hairer_introduction_2009}. Such an embedding always exists. For example, $\mathcal{H}_1$ can be realised as a negative Sobolev space (see Section \ref{sec:RKHS:computations} below). Let $\mathcal{H}_1'$ denote the dual space of $\mathcal{H}_1$ with the associated dual pairing $\sc{\cdot}{\cdot}_{\mathcal{H}_1\times\mathcal{H}_1'}$. Let $(e_k)_{k\geq 1}$ be an orthonormal basis of $L^2(\Lambda)$ and let $\beta_k$ be independent scalar Brownian motions. Then, realising the Wiener process as $W=\sum_{k\geq 1}e_k\beta_k$, we find for all $z\in\mathcal{H}_1'\subset L^2(\Lambda)$, $0\leq t\leq T$, that (see, e.g.~\cite[Lemma 2.4.1 and Proposition 2.4.5]{liu_stochastic_2015}) 
	\begin{align*}
        \sc{X(t)-S_{\theta}(t)X_0}{z}_{\mathcal{H}_1\times\mathcal{H}_1'} 
        &= \sum_{k\geq 1} \int_{0}^t\sc{S_{\theta}(t-t')e_k}{z}_{\mathcal{H}_1\times\mathcal{H}_1'}d\beta_k(t')\\
        &= \int_{0}^t\sc{S^*_{\theta}(t-t')z}{dW(t')}.
    \end{align*}
    According to \cite[Proposition 2.1]{altmeyer_nonparametric_2020} and \cite[Lemma 2.4.2]{liu_stochastic_2015} this allows us to extend the dual pairings $\sc{X(t)}{z}_{\mathcal{H}_1\times\mathcal{H}_1'}$ to a real-valued Gaussian process $(\sc{X(t)}{z})_{0\leq t\leq T, z\in L^2(\Lambda)}$ by  
    \begin{align}
        &\sc{X(t)}{z} = \sc{S_{\theta}(t)X_0}{z} + \int_{0}^t\sc{S^*_{\theta}(t-t')z}{dW(t')}\label{eq:X_GP}
    \end{align}
    (the notation $\sc{X(t)}{z}$ is used for convenience and indicates that the process does not depend on the embedding space $\mathcal{H}_1$). This process solves the SPDE \eqref{eq:SPDE} in the sense that for  all $z\in H^1_0(\Lambda)\cap H^2(\Lambda)$ and $0\leq t\leq T$
    \begin{align}
	    \sc{X(t)}{z} = \sc{X_0}{z} +\int_0^t\sc{X(t')}{A_{\theta}^*z}dt'+\sc{W(t)}{z}, \label{eq:Ito}
	\end{align}
	where $\sc{W(t)}{z}/\norm{z}_{L^2(\Lambda)}$ is a scalar Brownian motion.
	
	\subsection{Local measurements, construction of the estimator}
	
	Introduce for $z\in L^2(\R^d)$ the scale and shift operation \begin{align}
		z_{\delta,x}(y) & =\delta^{-d/2}z(\delta^{-1}(y-x)),\quad x,y\in\Lambda,\quad \delta>0.\label{eq:scaling}
	\end{align}
	Suppose that $K\in H^2(\R^d)$ is an (unscaled) point spread function with compact support (see Section \ref{sec:examples} for concrete examples). Consider locations $x_1,\dots,x_M\in\Lambda$, $M\in\N$, and a resolution level $\delta>0$, which is small enough to ensure that the point spread functions $K_{\delta,x_k}$ are supported on $\Lambda$. Local measurements of $X$ at the locations $x_1,\dots,x_M$ at resolution $\delta$ correspond to the continuously observed processes $X_{\delta}, X^{A_0}_\delta\in L^2([0,T];\R^M)$, $X^A_{\delta}\in L^2([0,T];\R^{p\times M})$, where  for $i=1,\dots,p$, $k=1,\dots,M$
	\begin{align*}
	    (X_{\delta})_k & = X_{\delta,k} = (\sc{X(t)}{K_{\delta,x_k}})_{0\leq t\leq T},\\
	    (X^{A_0}_{\delta})_{k}&=X_{\delta,k}^{A_0}=(\sc{X(t)}{A^*_0 K_{\delta,x_k}})_{0\leq t\leq T},\\
	    (X^A_{\delta})_{ik} & = X^{A_i}_{\delta,k} =  (\sc{X(t)}{A^*_i K_{\delta,x_k}})_{0\leq t\leq T}.
	\end{align*}
    According to \eqref{eq:Ito}, every local measurement is an Itô process
	\begin{equation}
		\label{eq:SPDEk}
		dX_{\delta,k}(t)=\bigg(\sum_{i=1}^p \theta_i X^{A_i}_{\delta,k}(t) + X^{A_0}_{\delta,k}(t)\bigg) dt+\norm{ K}_{L^2(\R^d)} dW_k(t)
	\end{equation}
    with initial values $X_{\delta,k}(0)=\sc{X_0}{K_{\delta,x_k}}$ and scalar Brownian motions $W_k(t)=\sc{W(t)}{K_{\delta,x_k}}/\norm{ K}_{L^2(\R^d)}$. 
	 
	It should be noted that neither \eqref{eq:SPDEk} nor the system of equations augmented with $X^A_{\delta}$, $X^{A_0}_{\delta}$ are Markov processes, because the time evolution at $x_k$ depends on the spatial structure of the whole process $X$, and not only of $X_{\delta}$. This is due to the infinite speed of propagation in space by $S_{\theta}(t)$. This also means the processes $X_{\delta,k}$ are \emph{not} independent, even if the driving noise processes $W_k$ are, e.g., due to non-overlapping supports of the $K_{\delta,x_k}$ as will be assumed below. Therefore, standard results for estimating the parameters $\theta_i$ from continuously observed diffusion processes by the maximum likelihood estimator (e.g., \cite{kutoyants_statistical_2013}) do not apply here. Instead, a general Girsanov theorem for multivariate It{\^o} processes, cf. \cite[Section 7.6]{liptser_statistics_2001}, yields after ignoring conditional expectations, the initial value and possible correlations between measurements the modified log-likelihood function
	\begin{equation}
		\begin{split}
		\ell_{\delta}(\theta)=  \norm{K}^{-2}_{L^2(\R^d)}\sum_{k=1}^{M}\bigg(\int_{0}^{T} \bigg(\sum_{i=1}^p \theta_i X^{A_i}_{\delta,k}(t)+X^{A_0}_{\delta,k}(t)\bigg)dX_{\delta,k}(t)\nonumber\\
			-\frac{1}{2}\int_{0}^{T} \bigg(\sum_{i=1}^p \theta_i X^{A_i}_{\delta,k}(t)+X^{A_0}_{\delta,k}(t)\bigg)^2\bigg)dt.\label{eq:logLik}
		\end{split}
	\end{equation}
	Maximising $\ell_\delta(\theta)$ with respect to $\theta$ leads to the estimator
	\begin{align}
			\hat{\theta}_{\delta} & = \mathcal{I}_{\delta}^{-1} \sum_{k=1}^{M} \left(\int_{0}^{T}X^A_{\delta,k}(t)dX_{\delta,k}(t)-\int_{0}^{T}X^A_{\delta,k}(t) X^{A_0}_{\delta,k}(t)dt\right),\label{eq:augMLE}
	\end{align}
	which we call \emph{augmented MLE} generalising
    \cite[Section 4.1]{altmeyer_parameter_2020}, with \emph{observed Fisher information}
	\begin{equation}\label{eq:Fisher} 
		\mathcal{I}_{\delta}=\sum_{k=1}^{M}\int_{0}^{T}X^A_{\delta,k}(t) X^A_{\delta,k}(t)^\top dt.
	\end{equation}
	
	\subsection{A central limit theorem}
	
	We show now that the augmented MLE $\hat{\theta}_{\delta}$ satisfies a CLT as $\delta\rightarrow0$. Replacing $dX_{\delta,k}(t)$ in the definition of the augmented MLE by the right hand side in \eqref{eq:SPDEk} yields the basic decomposition
	\begin{align}
	    \hat{\theta}_{\delta} = \theta + \norm{K}_{L^2(\R^d)}\mathcal{I}_{\delta}^{-1} \mathcal{M}_{\delta}\label{eq:basicError}
	\end{align}
	with the martingale term 
	\begin{align}
	    \mathcal{M}_{\delta} = \sum_{k=1}^{M} \left(\int_{0}^{T}X^A_{\delta,k}(t)dW_{k}(t)\right)\label{eq:martingale}.
	\end{align}
    If the Brownian motions $W_k$ are independent, then the matrix $\mathcal{I}_{\delta}$ corresponds to the quadratic co-variation process of $\mathcal{M}_{\delta}$ and we therefore expect $\mathcal{I}_{\delta}^{-1/2}\mathcal{M}_{\delta}$ to follow approximately a multivariate normal distribution. The rate at which the estimation error in \eqref{eq:basicError} vanishes corresponds to the speed at which the components of the observed Fisher information diverge. Exploiting scaling properties of the underlying semigroup (cf. Lemma \ref{rescaledsemigroup}), we will see that this depends on its action on the point spread functions $K_{\delta,x_k}$. We define a diagonal matrix of scaling coefficients $\rho_{\delta}\in \R^{p\times p}$,
	\begin{align}
	    (\rho_{\delta})_{ii}  = M^{-1/2}\delta^{n_i-1},
	\end{align}
	and make the following additional structural assumptions.
	
	\begin{myassumption}{H}\label{assump:mainAssump}\,
	 \begin{enumerate}[label=(\roman*)]
	    \item The functions $A_i K$ are linearly independent for all $i=1,\dots,p$.
	    \item $n_i > 1-d/2$ for all $i=1,\dots,p$.
	    \item The locations $x_k$, $k=1,\dots,M$, belong to a fixed compact set $\mathcal{J}\subset\Lambda$, which is independent of $\delta$ and $M$. There exists $\delta'>0$ such that $\mathrm{supp}(K_{\delta,x_k})\cap\mathrm{supp}(K_{\delta,x_l})=\emptyset$ for $k\neq l$ and all $\delta\leq\delta'$.
	    \item $\sup_{x\in\mathcal{J}} \int_{0}^{T}\E[\sc{X_0}{S^*_{\theta}(t)A_i^* K_{\delta,x}}^2] dt=o(\delta^{2-2n_i})$ for all $i=1,\dots,p$.
	\end{enumerate}
	\end{myassumption}
	Assumption \ref{assump:mainAssump}(i) guarantees invertibility of the observed Fisher information, for a proof see Section \ref{sec:remainingProofs}. 
	
	\begin{lemma}\label{lem:FisherInvertible}
	    Under Assumption \ref{assump:mainAssump}(i), $\mathcal{I}_{\delta}$ is $\P$-almost surely invertible.
	\end{lemma}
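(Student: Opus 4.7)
My plan is to show $\det(\mathcal{I}_\delta) > 0$ $\P$-almost surely, which is equivalent to invertibility since $\mathcal{I}_\delta$ is positive semi-definite by construction. A fixed-$v$ argument yields $\P(v^\top\mathcal{I}_\delta v > 0) = 1$ for each individual $v\in\R^p\setminus\{0\}$, but this does not suffice for simultaneous invertibility (any PSD random matrix of fixed rank $<p$ is a counterexample), so I exploit instead that $\mathcal{I}_\delta$ is the Gram matrix of the $p$ functions $\xi_i(t,k) := \sc{X(t)}{A_i^* K_{\delta,x_k}}$ in the Hilbert space $L^2([0,T]\times\{1,\dots,M\})$ equipped with the product of Lebesgue and counting measure. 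Dropping all summands except the diagonal $k_1 = \dots = k_p = 1$ in the standard Gram determinant identity gives
\begin{align*}
\det(\mathcal{I}_\delta)\geq \frac{1}{p!}\int_{[0,T]^p}\det M(\mathbf{t})^2\, d\mathbf{t},\qquad M(\mathbf{t}):=\bigl[\xi_i(t_j,1)\bigr]_{i,j=1}^p,
\end{align*}
so by Tonelli it is enough to verify $\P(\det M(\mathbf{t}) = 0) = 0$ for Lebesgue-almost every $\mathbf{t}\in[0,T]^p$ with distinct coordinates.

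Fix such $\mathbf{t}$ with $0<t_1<\dots<t_p\leq T$. Conditional on $X_0$, the $p\times p$ Gaussian matrix $M(\mathbf{t})$ admits a Lebesgue density on $\R^{p\times p}$ provided its conditional covariance is non-singular, i.e.\ for every nonzero $c=(c_{ij})\in\R^{p\times p}$ the variance of $\sum_{i,j}c_{ij}(\xi_i(t_j,1)-\E[\xi_i(t_j,1)\mid X_0])$ is strictly positive. By the mild-solution representation and It{\^o}'s isometry this variance equals
\begin{align*}
\int_0^T\Bigl\|\sum_{j=1}^p\mathbf{1}_{s\leq t_j}\,S_\theta^*(t_j-s)w_j\Bigr\|_{L^2(\Lambda)}^2\, ds,\qquad w_j:=\sum_{i=1}^p c_{ij}\,A_i^*K_{\delta,x_1}.
\end{align*}
If the integrand vanishes identically, I peel off intervals backwards in $j$: on $s\in(t_{p-1},t_p]$ only the $j=p$ term survives, forcing $S_\theta^*(t_p-s)w_p = 0$ for a.e.\ such $s$, and strong continuity of the analytic semigroup $(S_\theta^*(r))_{r\geq 0}$ at $r=0$ yields $w_p=0$. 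Iterating the same argument on $(t_{j-1},t_j]$ for $j=p-1,\dots,1$ produces $w_j=0$ for every $j$.

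It remains to deduce $c_{ij}=0$ from $w_j = \sum_i c_{ij}A_i^*K_{\delta,x_1} = 0$, which is where Assumption \ref{assump:mainAssump}(i) enters. Extending $K_{\delta,x_1}$ by zero to $\R^d$ and Fourier-transforming, the identity reads $\bigl(\sum_i c_{ij}\,p_i(i\xi)\bigr)\widehat{K_{\delta,x_1}}(\xi) = 0$ for a.e.\ $\xi\in\R^d$, where $p_i(i\xi)=\sum_{|\alpha|\leq n_i}(-1)^{|\alpha|}a_\alpha^{(i)}(i\xi)^\alpha$ is the polynomial symbol of $A_i^*$. Since $K$ is compactly supported and nonzero (note that $K=0$ would contradict Assumption \ref{assump:mainAssump}(i)), Paley--Wiener makes $\widehat{K_{\delta,x_1}}$ the restriction of an entire function on $\mathbb{C}^d$, hence real-analytic on $\R^d$ and in particular nonzero almost everywhere, so $\sum_i c_{ij}\,p_i\equiv 0$ as a polynomial in $\xi$. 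Matching homogeneous components of degrees $n=2,1,0$ in turn and invoking the linear independence of $\{\bar{A}_i^*K\}_{i:n_i=n}$ (inherited from Assumption \ref{assump:mainAssump}(i)) forces $c_{ij}=0$ within each degree stratum, so $c=0$. Thus $M(\mathbf{t})$ has non-degenerate conditional Gaussian law, giving $\P(\det M(\mathbf{t})=0)=0$, and the Tonelli reduction finishes the proof.

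The main obstacle is the passage from per-$v$ positivity to simultaneous invertibility, which cannot be handled by a countable union bound; the Gram-determinant/Tonelli reduction to a single finite-dimensional Gaussian matrix $M(\mathbf{t})$ is the key organisational idea, and the nontrivial technical input is the backwards peel-off based on the analytic semigroup together with the Fourier/Paley--Wiener reformulation of Assumption \ref{assump:mainAssump}(i) as a linear independence statement about the polynomial symbols of the $A_i^*$.
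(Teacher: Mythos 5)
Your proof is correct, but it follows a genuinely different route from the paper's. The paper also reduces to the single block $k=1$, but then treats $X_{\delta,1}^A$ as a $p$-dimensional It{\^o} process: after checking that $\beta=(\sc{A_i^*K_{\delta,x_1}}{A_j^*K_{\delta,x_1}})_{ij}$ is invertible (the same linear-independence input you use, though the paper only asserts that independence of the $\bar A_i^*K$ implies that of the $A_i^*K$, whereas your Fourier/Paley--Wiener degree-stratification actually proves it), it whitens by $\beta^{-1/2}$, invokes the innovation theorem and Girsanov to reduce a.s.\ invertibility of $\int_0^T X_{\delta,1}^A (X_{\delta,1}^A)^\top dt$ to that of $\int_0^T \bar W(t)\bar W(t)^\top dt$ for a $p$-dimensional Brownian motion $\bar W$, and finishes with the independent-increments argument that $\det(\bar W(t_1),\dots,\bar W(t_p))\neq 0$ a.s.\ for distinct times. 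You instead stay entirely within the Gaussian law of $X$: the Andr\'eief/Gram identity plus Tonelli reduce the problem to non-degeneracy of the finite-dimensional distribution $M(\mathbf t)$ at fixed distinct times, which you establish directly via It{\^o}'s isometry and the backward peel-off on $(t_{j-1},t_j]$ using strong continuity of the adjoint analytic semigroup at $0$. Your route buys you two things: it avoids the measure-change machinery (the paper does not verify the integrability conditions underlying its Girsanov/innovation step), and it works for $K\in H^2$ directly without the paper's preliminary density reduction to $K\in C_c^\infty$. The cost is the explicit covariance computation and the Andr\'eief identity, but both are standard; the argument is complete as written.
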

	
	The support condition in Assumption \ref{assump:mainAssump}(iii) is natural in applications, e.g., in microscopy. It guarantees $\sc{K_{\delta,x_k}}{K_{\delta,x_l}}=0$ and thus independence of the Brownian motions $W_k$ in \eqref{eq:SPDEk} as $\delta\rightarrow 0$. It holds for $x_k$, which are separated by a Euclidean distance of at least $C\delta$ for a fixed constant $C$, hence there are at most $M = O(\delta^{-d})$ such locations. 
    The next lemma shows that Assumption \ref{assump:mainAssump}(iv) on the initial value is satisfied in most relevant situations. For a proof see again Section \ref{sec:remainingProofs}.
	
	\begin{lemma}\label{lem:X0}
	    Assumption \ref{assump:mainAssump}(ii) implies Assumption \ref{assump:mainAssump}(iv) for any $X_0\in L^q(\Lambda)$, $q>2$, and if $c_{\theta}\leq 0$ also for the stationary initial condition $X_0=\int_{-\infty}^0 S_{\theta}(-t')dW(t')$.
	\end{lemma}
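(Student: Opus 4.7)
The plan is to combine analytic semigroup smoothing with the scaling behaviour of $K_{\delta,x}$, treating the two initial conditions separately.

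For the $L^q$-case I would start from $\sc{X_0}{S^*_\theta(t) A_i^* K_{\delta,x}} = \sc{S_\theta(t) X_0}{A_i^* K_{\delta,x}}$ and, for each monomial $D^\alpha$ making up $A_i^*$, integrate by parts to shift some number $r \in \{0, \dots, |\alpha|\}$ of the derivatives onto $S_\theta(t) X_0$ (legitimate since $K_{\delta,x}$ has compact support in $\Lambda$ for small $\delta$). Hölder with conjugate exponents $p$ and $p'$, the $L^q \to L^p$ analytic smoothing bound
\[
\|D^r S_\theta(t) f\|_{L^p} \lesssim t^{-r/2 - (d/2)(1/q - 1/p)} \|f\|_{L^q} \quad (p \geq q),
\]
and the scaling $\|D^{n_i - r} K_{\delta,x}\|_{L^{p'}} \lesssim \delta^{d/2 - d/p - (n_i - r)}$ combine to
\[
\E[\sc{X_0}{S^*_\theta(t) A_i^* K_{\delta,x}}^2] \lesssim t^{-r - d(1/q - 1/p)}\, \delta^{d(1 - 2/p) + 2r - 2n_i}\, \E\|X_0\|_{L^q}^2.
\]
Integration over $[0,T]$ is finite provided $r + d(1/q - 1/p) < 1$, and the spatial factor is $o(\delta^{2-2n_i})$ iff $d(1 - 2/p) > 2 - 2r$. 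A short case check in $d$ and $n_i$ shows that these two linear constraints on $1/p$ admit a common solution with $p > \max(q, 2)$ and a suitable $r \in \{0, \dots, n_i\}$ precisely when $q > 2$, under Assumption \ref{assump:mainAssump}(ii).

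For the stationary initial condition, the It\^o isometry and Fubini give
\[
\int_0^T \E[\sc{X_0}{S^*_\theta(t) A_i^* K_{\delta,x}}^2]\, dt = \int_0^\infty \min(u, T)\, \|S^*_\theta(u) A_i^* K_{\delta,x}\|_{L^2}^2\, du.
\]
The sign condition $\sum_i \theta_i a_0^{(i)} + a_0^{(0)} \leq 0$ combined with strong ellipticity yields a spectral gap for $A_\theta$, ensuring both that $X_0$ is well-defined and the exponential decay $\|S^*_\theta(u)\|_{\operatorname{op}} \lesssim e^{-cu}$. Dualizing $\|S^*_\theta(u) A_i^* K_{\delta,x}\|_{L^2} = \sup_{\|f\|_{L^2}=1} \sc{D^r S_\theta(u) f}{D^{n_i - r} K_{\delta,x}}$ (after the same integration by parts) and applying the same smoothing and scaling bounds with $q = 2$ gives
\[
\|S^*_\theta(u) A_i^* K_{\delta,x}\|_{L^2}^2 \lesssim u^{-r - d(1/2 - 1/p)}\, e^{-2cu}\, \delta^{d(1 - 2/p) + 2r - 2n_i}.
\]
The Fubini weight $\min(u, T)$ contributes an extra factor of $u$ near $u = 0$, so time-integrability only requires $r + d(1/2 - 1/p) < 2$ (one degree milder than in the $L^q$-case), while the same spatial condition $d(1 - 2/p) > 2 - 2r$ secures the little-$o$. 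Admissible pairs $(r, p)$ exist in every case allowed by Assumption \ref{assump:mainAssump}(ii).

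The main obstacle is the delicate bookkeeping in the borderline regime $n_i = 0$ in dimension $d \geq 3$ (the only $n_i = 0$ case permitted by (ii)), especially in the stationary part: a naive bound $\|A_i^* K_{\delta,x}\|_{L^2} \lesssim \delta^{-n_i}$ yields only $O(\delta^{2-2n_i})$ and not $o(\cdot)$, so one must genuinely use the $L^2 \to L^p$ smoothing with $p$ chosen strictly above $2d/(d-2)$ and combine it with the Fubini weight $\min(u,T)$ to make the common range of $(r, p)$ non-empty. Everything else reduces to standard semigroup estimates for the analytic semigroup generated by the strongly elliptic $A_\theta$ on a bounded smooth domain with Dirichlet boundary conditions.
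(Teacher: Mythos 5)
Your stationary-case argument is sound and close in spirit to the paper's (which instead rescales via $S^*_{\theta}(t)u_{\delta,x}=(S^*_{\theta,\delta,x}(t\delta^{-2})u)_{\delta,x}$ and concludes by dominated convergence); the extra power of $u$ from the Fubini weight $\min(u,T)$ is exactly what makes a single power-law bound integrable there. The problem is the $L^q$ part. Your two constraints on $1/p$ for fixed integer $r$ read $1/q-(1-r)/d<1/p<1/2-(1-r)/d$, and this interval, while non-empty for $q>2$, has length $d(1/2-1/q)$ and must additionally intersect $\{1/p\le 1/q\}$ (you need $p\ge q$ for the $L^q\to L^p$ smoothing bound as you state it; for $p<q$ the exponent $-(d/2)(1/q-1/p)$ would be positive and the estimate false). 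In $d=1$ with $n_i=2$ and $q$ close to $2$: $r=0$ forces $1/p<1/2-1<0$, impossible; $r=1$ forces $1/p\in(1/q,1/2)$, i.e.\ $p<q$, inadmissible (and even granting boundedness of $\Lambda$ the time factor becomes $t^{-1}$, not integrable); $r=2$ forces $1/p>1/q+1$, impossible. The same failure occurs for $d=2$. So your ``short case check'' is wrong precisely in the dimensions $d\le 2$ covered by Assumption H(ii) with $n_i\ge 1$, and no choice of integer $r$ and admissible $p$ rescues the single-bound strategy there.

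The missing idea is that one must not insist on a single $t$-power that is simultaneously integrable on $[0,T]$ and carries the full $\delta$-gain. The paper splits the time integral at a $\delta$-dependent cutoff $\varepsilon'$: on $[0,\varepsilon']$ it uses the crude, $t$-uniform Hölder bound $\sup_x\sc{X_0}{S_{\theta}^*(t)A_i^*K_{\delta,x}}^2\lesssim\delta^{d(1-2/p)-2n_i}\norm{X_0}_{L^p}^2$ (gaining only the small positive power $d(1-2/p)$ but picking up the factor $\varepsilon'$ from the short time interval), and on $[\varepsilon',T]$ it uses the semigroup decay $\norm{S^*_{\theta,\delta,x}(t\delta^{-2})A^*_{i,\delta,x}K}\lesssim (t\delta^{-2})^{-n_i/2-d/4+\varepsilon}$ with no reference to $X_0$'s integrability, yielding $(\varepsilon')^{1-n_i-d/2+\varepsilon}\delta^{d-2\varepsilon}$. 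Optimising $\varepsilon'$ in $\delta$ interpolates between the two regimes and produces $o(\delta^{2-2n_i})$ for every $q>2$ in all dimensions allowed by H(ii). (Equivalently, you could repair your scheme by allowing a fractional number $r\in(1-d/2+d/p,\,1-d/q+d/p)$ of derivatives via $(-\Delta)^{r/2}$, which is morally the same interpolation; but with $r$ restricted to integers the argument does not close.) Your approach as written is correct only for $d\ge 3$, where $r=0$ with $p$ slightly above $2d/(d-2)$ works.
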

    We establish now the asymptotic behaviour of the observed Fisher information and a CLT for the augmented MLE as the resolution $\delta$ tends to zero. To this extent, consider the positive operator $-\nabla\cdot a_{\theta}\nabla$ with domain $H^2(\R^d)$. Its spectral calculus induces for each $s\in\R$ the fractional operator $(-\nabla \cdot a_{\theta}\nabla)^s$, which acts in the Fourier domain as the multiplication operator with multiplier $\xi\mapsto (-\xi^\top a_{\theta}\xi)^s$, cf.~\cite{kwasnicki2017ten} or \cite[Chapter VI.5]{EngNag00}. By positive definiteness of $a_{\theta}$, this means $(-\nabla \cdot a_{\theta}\nabla)^s z \in L^2(\R^d)$ as soon as $\xi\mapsto |\xi|^{2s}\mathcal{F}z(\xi)\in L^2(\R^d)$  with the Fourier transform $\mathcal{F}z$. By usual Fourier calculus \cite[Lemma VI.5.4]{EngNag00}, $\mathcal{F}D_jz=i\xi_j\mathcal{F}z$. Together with Assumption \ref{assump:mainAssump}(ii), this means $(-\nabla\cdot a_{\theta}\nabla)^{-1/2}A_i^* K\in L^2(\R^d)$ for all $i=1,\dots,p$. 

	\begin{theorem}
		\label{thm:clt}
		Under Assumption \ref{assump:mainAssump} the matrix $\Sigma_{\theta}\in\R^{p\times p}$ with entries
    	\begin{align*}
    	    (\Sigma_{\theta})_{ij}=(T/2)\sc{(-\nabla\cdot a_{\theta}\nabla)^{-1/2}A_i^* K}{(-\nabla\cdot a_{\theta}\nabla)^{-1/2}A_j^* K}_{L^2(\R^d)}
    	\end{align*}
    	is invertible and $\rho_{\delta} \mathcal{I}_{\delta}\rho_{\delta} \stackrel{\P}\rightarrow \Sigma_{\theta}$ as $\delta\rightarrow 0$. Moreover, the augmented MLE satisfies the CLT
		\begin{equation*}
			(\rho_\delta\mathcal{I}_{\delta}\rho_\delta)^{1/2}\rho_\delta^{-1}(\hat{\theta}_{\delta}-\vartheta)\stackrel{d}{\rightarrow}\mathcal{N}(0, \norm{K}^{2}_{L^2(\R^d)} I_{p\times p}),\quad\delta\rightarrow 0,
		\end{equation*}
		or, equivalently,
		\begin{equation*}
		  (M^{1/2}\delta^{1-n_i}(\hat{\theta}_{\delta,i}-\theta_{i}))_{i=1}^p\stackrel{d}{\rightarrow}\mathcal{N}(0, \norm{K}^{2}_{L^2(\R^d)} \Sigma^{-1}_{\theta}).
		\end{equation*}
    \end{theorem}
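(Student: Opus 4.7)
The plan is to derive the CLT from the basic decomposition \eqref{eq:basicError} via two steps: a law of large numbers for the rescaled observed Fisher information $\rho_\delta\mathcal{I}_\delta\rho_\delta$, and a continuous martingale CLT for the rescaled score $\rho_\delta\mathcal{M}_\delta$.

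\textbf{Well-definedness of $\Sigma_\theta$.} By the strong ellipticity \eqref{eq:differential:operators}, the constant-coefficient operator $-\bar{A}_\theta$ on $\R^d$ is positive self-adjoint via Fourier diagonalisation, and its fractional power $(-\bar{A}_\theta)^{-1/2}$ is a Fourier multiplier with symbol of order $-1$. Hence $(-\bar{A}_\theta)^{-1/2}\bar{A}_i^*K\in L^2(\R^d)$ provided $\bar{A}_i^*K$ has enough low-frequency decay. The borderline dimensions $d\leq 2$ are handled by Assumption \ref{assump:mainAssump}(ii): when $n_i>0$, the symbol of $\bar{A}_i^*$ vanishes at the origin, compensating the low-frequency singularity of $(-\bar{A}_\theta)^{-1}$. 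Invertibility of $\Sigma_\theta$ then follows from \ref{assump:mainAssump}(i), as $\Sigma_\theta$ is, up to a factor $T/2$, the Gram matrix of the linearly independent vectors $(-\bar{A}_\theta)^{-1/2}\bar{A}_i^*K$.

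\textbf{Convergence of the Fisher information.} For a single location $x_k$, the key scaling identity is $A_i^*K_{\delta,x_k}=\delta^{-n_i}(\bar{A}_i^*K)_{\delta,x_k}+r^{(i)}_{\delta,k}$, where the remainder collects strictly lower-order derivatives and, after rescaling, is of negligible size. Using the mild formulation \eqref{eq:weakSolution}, parabolic smoothing of the semigroup $S_\theta$ and the disjoint-support condition \ref{assump:mainAssump}(iii), one approximates $X$ locally near $x_k$ by the full-space solution driven only by $\bar{A}_\theta$; assumption \ref{assump:mainAssump}(iv) guarantees the contribution of $X_0$ vanishes after rescaling. Following the single-measurement analysis in \cite{altmeyer_nonparametric_2020}, this yields
\[
    \delta^{n_i+n_j-2}\int_0^T\E[\sc{X(t)}{A_i^*K_{\delta,x_k}}\sc{X(t)}{A_j^*K_{\delta,x_k}}]\,dt\rightarrow (\Sigma_\theta)_{ij}
\]
uniformly in $x_k\in\mathcal{J}$, so averaging gives $\E[(\rho_\delta\mathcal{I}_\delta\rho_\delta)_{ij}]\rightarrow(\Sigma_\theta)_{ij}$. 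The main obstacle, and the principal novelty compared with the case $M=1$, is the variance: although the $W_k$ decouple, the processes $\sc{X(t)}{A_i^*K_{\delta,x_k}}$ at different $x_k$ are correlated through the global dynamics of $X$. Quantifying these cross-covariances via heat-kernel-type bounds on $S_\theta$ decaying in $|x_k-x_l|$ should show that the off-diagonal contributions to the variance of $M(\rho_\delta\mathcal{I}_\delta\rho_\delta)_{ij}$ are $o(M^2)$, and hence $\rho_\delta\mathcal{I}_\delta\rho_\delta\stackrel{\P}{\rightarrow}\Sigma_\theta$.

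\textbf{Martingale CLT and conclusion.} For $\delta\leq\delta'$, assumption \ref{assump:mainAssump}(iii) guarantees pairwise disjoint supports of the $K_{\delta,x_k}$, hence independence of the standard Brownian motions $W_k$. Consequently $\rho_\delta\mathcal{M}_\delta$ is a continuous vector-valued martingale whose quadratic covariation matrix equals $\rho_\delta\mathcal{I}_\delta\rho_\delta$, which by the previous step converges in probability to the deterministic limit $\Sigma_\theta$. A standard continuous martingale CLT then yields $\rho_\delta\mathcal{M}_\delta\stackrel{d}{\rightarrow}\mathcal{N}(0,\Sigma_\theta)$ jointly with the quadratic variation convergence. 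Rewriting \eqref{eq:basicError} as
\[
    (\rho_\delta\mathcal{I}_\delta\rho_\delta)^{1/2}\rho_\delta^{-1}(\hat{\theta}_\delta-\theta)=\norm{K}_{L^2(\R^d)}(\rho_\delta\mathcal{I}_\delta\rho_\delta)^{-1/2}\rho_\delta\mathcal{M}_\delta
\]
and applying Slutsky's lemma yields the first displayed CLT, since $\norm{K}_{L^2(\R^d)}\Sigma_\theta^{-1/2}\mathcal{N}(0,\Sigma_\theta)=\mathcal{N}(0,\norm{K}_{L^2(\R^d)}^2 I_p)$; the equivalent second form then follows by extracting the diagonal scaling $\rho_\delta$.
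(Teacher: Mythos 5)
Your proposal follows essentially the same architecture as the paper's proof: well-definedness and invertibility of $\Sigma_\theta$ from the linear independence in Assumption \ref{assump:mainAssump}(i), convergence of $\rho_\delta\mathcal{I}_\delta\rho_\delta$ by an expectation-plus-variance argument built on the rescaled semigroup, independence of the $W_k$ from the disjoint supports, a continuous martingale CLT identifying $[\mathcal{M}_\delta]_T=\mathcal{I}_\delta$, and Slutsky. The one substantive divergence is the variance step. You treat the spatial correlations between locations as the crux and propose to control the off-diagonal covariances of $\int_0^T\sc{X(t)}{A_i^*K_{\delta,x_k}}\sc{X(t)}{A_j^*K_{\delta,x_k}}\,dt$ via heat-kernel decay in $|x_k-x_l|$; this is feasible (the paper carries out exactly such estimates for the lower bound in Lemma \ref{lem:concrete_lower_bound}) but is left as a ``should show'' and is in fact unnecessary here. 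The paper's Lemma \ref{ConvouterVar} shows, via Wick's theorem and the uniform semigroup decay of Lemma \ref{boundS*u}, that the variance of each \emph{single-location} term is $o(\delta^{4-2n_i-2n_j})$ uniformly in $x\in\mathcal{J}$, i.e.\ of smaller order than the square of its mean; a crude Cauchy--Schwarz bound on the double sum of covariances then already gives $\operatorname{Var}((\rho_\delta\mathcal{I}_\delta\rho_\delta)_{ij})\leq \delta^{2n_i+2n_j-4}\sup_x\operatorname{Var}(\cdots)=o(1)$ with no spatial decorrelation needed. If you go your route you must actually prove the kernel decay; the paper's route closes the step with estimates you would need anyway for the expectation. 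Your remaining steps (Fourier-multiplier justification of $(-\bar A_\theta)^{-1/2}\bar A_i^*K\in L^2$ under Assumption \ref{assump:mainAssump}(ii), the Cauchy--Schwarz removal of $X_0$ via Assumption \ref{assump:mainAssump}(iv), and the final Slutsky computation) match the paper's.
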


    Theorem \ref{thm:clt} shows that parameters $\theta_i$ of an operator $A_i$ with differential order $n_i$ can be estimated at the rate of convergence $M^{1/2}\delta^{1-n_i}$. Consistency requires $M^{1/2}\delta^{1-n_i}\rightarrow \infty$. This excludes reaction terms $\theta_i$ in $d=1$ with $n_i=0$ and $M=O(\delta^{-1})$, but in $d=2$ a logarithmic rate holds for a restricted class of functions $K$, see Proposition \ref{prop: reactiond2}. The asymptotic variances for two parameters $\theta_i$, $\theta_j$ are independent if $A^*_iK$ and $A^*_jK$ are orthogonal in the geometry induced by $\norm{(-\nabla\cdot a_{\theta}\nabla)^{-1/2}\cdot}_{L^2(\R^d)}$. The theorem generalises \cite[Theorem 5.3]{altmeyer_nonparametric_2020} in the parametric case to the anisotropic setting with $M$ measurement locations.

    \section{The RKHS}\label{Sec:RKHS:SPDE}

    In Section \ref{sec:optimality}, we show optimality of the rates of convergence appearing in Theorem \ref{thm:clt}. A crucial ingredient for these lower bound considerations is a good understanding of the reproducing kernel Hilbert space (RKHS) of the Gaussian measure induced by the law of the observations when $A_{\theta}=\Delta$.

	We first derive the RKHS of the stochastic convolution \eqref{eq:weakSolution} in a more general setting. Suppose that $A$ is an (unbounded) negative self-adjoint closed operator on a Hilbert space $(\mathcal{H},\norm{\cdot}_{\mathcal{H}})$ with domain $\mathcal{D}(A)\subset\mathcal{H}$ such that  $Ae_j=-\lambda_j e_j$ for a non-decreasing sequence $(\lambda_j)_{j\geq 1}$ of positive real numbers with $0<\lambda_1\leq \lambda_2\leq \cdots$ and an orthonormal basis $(e_j)_{j\geq 1}$ of $\mathcal{H}$, and such that $A$ generates a strongly continuous semigroup $(S(t))_{t\geq 0}$ on $\mathcal{H}$ \cite{EngNag00}. With a cylindrical Wiener process $W$, consider the stationary stochastic convolution
	\begin{align}
	    X(t)=\int_{-\infty}^t S(t-t')dW(t'),\quad t\geq 0.\label{eq:stochConv}
	\end{align}
	As discussed after \eqref{eq:weakSolution} the process $X=(X(t))_{0\leq t\leq T}$ is understood as a random element with values in $\mathcal{H}\subset\mathcal{H}_1$ almost surely for some larger Hilbert space $\mathcal{H}_1$. 

    In what follows, we use the convention that a RKHS is denoted by the letter $H$. Moreover, we add a subscript to indicate the process which is under consideration. For instance, $H_{X}$ denotes the RKHS of $X$ considered as a Gaussian random variable taking values in the Hilbert space $L^2([0,T];\mathcal{H}_1)$. Since the RKHS of $X$ depends only on its distribution, the RKHS, as well as its norm, in the next theorem are independent of the embedding space $\mathcal{H}_1$ (see, e.g.,~Exercise 2.6.5 in \cite{gine_mathematical_2016}). For the proof and some background on the RKHS of a Gaussian measure see Section \ref{sec:RKHS:computations}.

	\begin{theorem}\label{thm:RKHS:SPDE}
	    Let $(H_{X},\|\cdot\|_{X})$ be the RKHS of the process $X$ in \eqref{eq:stochConv}. Let $T\geq 1$. Then
	    \begin{align*}
	        H_X 
	            & = \{h\in L^2([0,T];\mathcal{H}):h\text{ absolutely continuous, } A h,h'\in L^2([0,T];\mathcal{H})\}
	     \end{align*}
	     and for $h\in H_X$
	     \begin{align*}
	     \quad & \|h\|_{X}^2 
                =\|A h\|_{L^2([0,T];\mathcal{H})}^2 +\|h'\|_{L^2([0,T];\mathcal{H})}^2+\|(-A)^{1/2} h(0)\|^2_{\mathcal{H}}+\|(-A)^{1/2} h(T)\|^2_{\mathcal{H}},
	    \end{align*}
	    as well as 
	    \begin{align*}
	        \|h\|_{X}^2\leq 3\|A h\|_{L^2([0,T];\mathcal{H})}^2 +\|h\|_{L^2([0,T];\mathcal{H})}^2+2\|h'\|_{L^2([0,T];\mathcal{H})}^2.
	    \end{align*}
	\end{theorem}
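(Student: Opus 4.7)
The plan is to exploit the Cameron--Martin structure of the pair $(X(0), W|_{[0,T]})$, which jointly determine the trajectory of $X$ on $[0,T]$ via the mild-solution map. Since $X$ is stationary, $X(0) = \int_{-\infty}^{0} S_{\theta}(-s)\,dW(s)$ is independent of $W|_{[0,T]}$ and centered Gaussian with covariance $\int_0^\infty S_{\theta}(s)S_{\theta}(s)^\ast ds = (-2A_{\theta})^{-1}$ (verified in the eigenbasis via $\int_0^\infty e^{-2\lambda_j s}ds = (2\lambda_j)^{-1}$). Hence the Cameron--Martin space of $X(0)$ is $\mathcal{D}((-A_{\theta})^{1/2})$ with squared norm $2\|(-A_{\theta})^{1/2} g_0\|_{\mathcal{H}}^2$, and the Cameron--Martin space of $W|_{[0,T]}$ consists of absolutely continuous paths $G = \int_0^{\cdot} g(s)\,ds$ with $g \in L^2([0,T];\mathcal{H})$ and squared norm $\|g\|_{L^2([0,T];\mathcal{H})}^2$. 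Independence makes the joint Cameron--Martin space a direct sum with the sum of squared norms.

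Next, write $X(t) = S_{\theta}(t)X(0) + \int_0^t S_{\theta}(t-s)\,dW(s)$ as the image of $(X(0), W|_{[0,T]})$ under the linear map $F:(g_0, g)\mapsto h$ with $h(t) = S_{\theta}(t)g_0 + \int_0^t S_{\theta}(t-s)g(s)\,ds$. By general RKHS theory for Gaussian measures (cf.\ Bogachev), $H_X = F\bigl(\mathcal{D}((-A_{\theta})^{1/2}) \times L^2([0,T];\mathcal{H})\bigr)$ equipped with the induced quotient norm. A direct calculation, coordinate-wise in the eigenbasis, shows $F$ is injective with inverse $h \mapsto (h(0), h'-A_{\theta}h)$. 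Hence
\[
\|h\|_X^2 = 2\|(-A_{\theta})^{1/2} h(0)\|_{\mathcal{H}}^2 + \|h' - A_{\theta}h\|_{L^2([0,T];\mathcal{H})}^2.
\]
Expanding and using the integration-by-parts identity $-2\int_0^T \langle h'(t), A_{\theta}h(t)\rangle\,dt = \|(-A_{\theta})^{1/2}h(T)\|^2 - \|(-A_{\theta})^{1/2}h(0)\|^2$ (valid coordinate-wise on the eigenbasis) yields the symmetric expression claimed in the theorem.

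For the description of $H_X$ as a set, the forward direction follows from analytic semigroup regularity: if $g_0 \in \mathcal{D}((-A_{\theta})^{1/2})$ and $g \in L^2$, then $A_{\theta} S_{\theta}(\cdot)g_0 \in L^2([0,T];\mathcal{H})$ (computed in the eigenbasis, this amounts to $\sum_j \lambda_j^2 g_{0,j}^2 \cdot \frac{1-e^{-2\lambda_j T}}{2\lambda_j} \le \|(-A_{\theta})^{1/2} g_0\|^2$) and maximal regularity of the analytic semigroup gives $A_\theta \int_0^{\cdot}S_{\theta}(\cdot -s)g(s)\,ds \in L^2$. The converse uses the trace embedding $\{h : h, h', A_{\theta}h \in L^2([0,T];\mathcal{H})\}\hookrightarrow C([0,T];\mathcal{D}((-A_{\theta})^{1/2}))$, a standard consequence of $\tfrac{d}{dt}\|(-A_{\theta})^{1/2}h\|^2 = -2\langle A_{\theta}h, h'\rangle \in L^1$; this is the nontrivial regularity input and the technical crux of the ``if'' direction.

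Finally, the upper bound reduces to controlling the boundary terms $\|(-A_{\theta})^{1/2}h(0)\|^2$ and $\|(-A_{\theta})^{1/2}h(T)\|^2$ in terms of $\|A_{\theta}h\|^2, \|h\|^2, \|h'\|^2$. The plan is to work again coordinate-wise: for $f = h_j$ and $\lambda = \lambda_j$, integration by parts against a linear weight gives $\lambda f(0)^2 = T^{-1}\lambda\|f\|_{L^2}^2 - 2\lambda \int_0^T(1-s/T)f(s)f'(s)\,ds$, then Young's inequality $2\lambda |ff'|\leq \lambda^2 f^2 + (f')^2$ combined with $\lambda \leq \tfrac12(\lambda^2+1)$ lets one absorb the $\lambda$ factor into the $\|A_{\theta}h\|^2$ and $\|h\|^2$ terms (mirrored at $t=T$). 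Summing over the eigenbasis and recombining with the identity gives the stated estimate. The main obstacle beyond the bookkeeping will be calibrating the constants to land at exactly $3\|A_{\theta}h\|^2+\|h\|^2+2\|h'\|^2$, which requires choosing the Young's split carefully and using the first representation $\|h\|_X^2 = 2\|(-A_{\theta})^{1/2}h(0)\|^2 + \|h'-A_{\theta}h\|^2$ rather than the symmetric one to minimise the coefficients.
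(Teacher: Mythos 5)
Your proposal is correct in substance but reaches the result by a genuinely different route. The paper diagonalises $X$ into independent scalar Ornstein--Uhlenbeck coordinates $Y_j$, computes each one-dimensional RKHS by a deterministic time-change to Brownian motion (Lemma \ref{lemma:RKHS_norm_OU_process}), sums the squared norms (Lemma \ref{lem:RKHS:SPDE}), and only then translates the series into the operator form; the boundary terms $\lambda_j(h_j^2(0)+h_j^2(T))$ appear directly from the OU computation. You instead factor the law of $X$ through the independent pair $(X(0),W|_{[0,T]})$ and push the product Cameron--Martin space through the solution map $F$, which yields the equivalent ``control-energy'' form $\|h\|_X^2=2\|(-A_\theta)^{1/2}h(0)\|_{\mathcal H}^2+\|h'-A_\theta h\|_{L^2([0,T];\mathcal H)}^2$ and then the symmetric expression by integration by parts; your identities (covariance $(-2A_\theta)^{-1}$ of $X(0)$, injectivity of $F$ with inverse $h\mapsto(h(0),h'-A_\theta h)$, the trace argument for $h(0)\in\mathcal D((-A_\theta)^{1/2})$, and the weighted integration by parts plus Young for the constants $3,1,2$) all check out and reproduce exactly the paper's Lemma \ref{lemma:RKHS_norm_OU_process} coordinatewise. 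What your approach buys is a more conceptual formula and no explicit time-change; what it costs is that $F$, acting on the Wiener path through the stochastic convolution, is only a \emph{measurable} linear map on the path space of $W$, not a bounded one, so the transformation rule for the RKHS cannot be taken from \cite[Proposition 4.1]{MR3024389} as the paper does elsewhere — you must invoke Bogachev's theory of measurable linear operators (and check that the measurable extension agrees with the deterministic convolution on Cameron--Martin elements), or replace this step by a finite-dimensional projection and limiting argument of the kind the paper uses in Appendix \ref{app:RKHS:measurements:approximation:argument}. This is the one step of your sketch that genuinely needs to be filled in; the trace embedding you flag as the ``crux'' is handled the same way in the paper (via a mean-value point $t_0$ and the integrable derivative of $\|(-A_\theta)^{1/2}h(t)\|^2$). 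Finally, note that your calibration of the constants, like the paper's own proof of \eqref{eq:upper:bound:RKHS:norm:SPDE}, silently uses $T\geq 1$ to absorb the $T^{-1}$ factor; this hypothesis is not stated in the theorem, so you inherit the same (minor) imprecision as the paper.
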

    Note that $h,h',A h\in L^2([0,T];\mathcal{H})$ implies that the map $t\mapsto \langle Ah(t),h(t)\rangle$ is absolutely continuous (cf.~the proof of \cite[Theorem 5.9.3]{evans_partial_2010} and the proof of Theorem \ref{thm:RKHS:SPDE}), so that the norm $\|\cdot\|_{X}$ is indeed well-defined.
    Theorem \ref{thm:RKHS:SPDE} generalises the result for scalar Ornstein-Uhlenbeck processes to the infinite dimensional process $X$, cf. Lemma \ref{lemma:RKHS_norm_OU_process} below. 
    
    Next, as in \eqref{eq:Ito}, consider the Gaussian process $(\sc{X(t)}{z}_{\mathcal{H}})_{t\geq 0,z\in\mathcal{H}}$, where the `inner product' here corresponds to
    \begin{align*}
        \sc{X(t)}{z}_{\mathcal{H}} = \int_{-\infty}^t\sc{S(t-t')z}{dW(t')}_{\mathcal{H}},
    \end{align*}
    satisfying \eqref{eq:Ito} for $z\in\mathcal{D}(A^*)=\mathcal{D}(A)$ by analogous arguments. We study the RKHS of $(\sc{X(t)}{z}_{\mathcal{H}})_{0\leq t\leq T}$ for finitely many $z$. A first proof considers $z$ from the dual space of $\mathcal{H}_1$. In that case, we can realise $\sc{X}{z}_{\mathcal{H}}$ as a linear transformation of $X$ by a bounded linear map $L$ from $L^2([0,T];\mathcal{H}_1)$ to $L^2([0,T])^M$, and this allows for relating the RKHS of $X$ and $\sc{X}{z}_{\mathcal{H}}$ using Theorem \ref{thm:RKHS:SPDE}. Another proof is presented in Section \ref{app:RKHS:measurements:approximation:argument}, which circumvents this by an approximation argument.

	\begin{theorem}\label{thm:upper:bound:RKHS:norm:M>1}
    For $K_1,\dots,K_M\in \mathcal{D}(A)$ and with $X$ in \eqref{eq:stochConv} consider the process $X_K$ with $X_{K}(t)=(\sc{X(t)}{K_k}_{\mathcal{H}})_{k=1}^M$. Suppose that the Gram matrix $G=(\sc{K_k}{K_l}_{\mathcal{H}})_{1\leq k,l\leq M}$ is non-singular, and let $G_{A}=(\sc{A K_k}{AK_l}_{\mathcal{H}})_{1\leq k,l\leq M}$. Let $T\geq 1$. Then the RKHS $(H_{X_K},\|\cdot\|_{X_{K}})$ of $X_K$ satisfies $H_{X_K}=H^M$, where
    \begin{align*}
       H=\{h\in L^2([0,T]):h\text{ absolutely continuous}, h'\in L^2([0,T])\}
    \end{align*} 
    and for $h=(h_k)_{k=1}^M\in H_{X_K}$
    \begin{align*}
        \norm{h}_{X_K}^2&\leq (3\|G^{-1}\|_{\operatorname{op}}^2\|G_{A} \|_{\operatorname{op}}+\|G^{-1}\|_{\operatorname{op}})\sum_{k=1}^M\norm{h_k}^2_{L^2([0,T])}+2\|G^{-1}\|_{\operatorname{op}}\sum_{k=1}^M\norm{h_k'}^2_{L^2([0,T])}.
    \end{align*}
    \end{theorem}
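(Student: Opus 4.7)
The plan is to realize $X_K$ as a bounded linear image of the stochastic convolution $X$ and then transport the RKHS computed in Theorem \ref{thm:RKHS:SPDE} through this map. Specifically, let $L\colon L^2([0,T];\mathcal{H}_1)\to L^2([0,T])^M$ be defined by $(Lg)_k(t)=\langle g(t),K_k\rangle_{\mathcal{H}_1\times\mathcal{H}_1'}$, so that $X_K=LX$. The general RKHS transformation principle invoked in Section \ref{sec:RKHS:computations} gives $H_{X_K}=L(H_X)$ together with
\begin{equation*}
    \|h\|_{X_K}^2=\inf\bigl\{\|g\|_X^2:g\in H_X,\;Lg=h\bigr\}.
\end{equation*}
It therefore suffices, for each target $h=(h_k)_{k=1}^M$ with $h_k$ absolutely continuous and $h_k'\in L^2([0,T])$, to exhibit one preimage $g\in H_X$ whose $X$-norm matches the claimed bound.

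Using invertibility of the Gram matrix $G$, the natural choice is
\begin{equation*}
    g(t)=\sum_{k,l=1}^M(G^{-1})_{kl}h_l(t)K_k\in\mathcal{D}(A_{\vartheta}),
\end{equation*}
which satisfies $\langle g(t),K_m\rangle_{\mathcal{H}}=h_m(t)$ because $GG^{-1}=I$. Setting $c(t)=G^{-1}h(t)\in\R^M$, a direct calculation gives
\begin{equation*}
    \|g(t)\|_{\mathcal{H}}^2=h(t)^\top G^{-1}h(t),\qquad \|g'(t)\|_{\mathcal{H}}^2=h'(t)^\top G^{-1}h'(t),
\end{equation*}
\begin{equation*}
    \|A_{\vartheta}g(t)\|_{\mathcal{H}}^2=h(t)^\top G^{-1}G_{A_{\vartheta}}G^{-1}h(t).
\end{equation*}
Bounding the first two quadratic forms by $\|G^{-1}\|_{\operatorname{op}}|h(t)|^2$ and $\|G^{-1}\|_{\operatorname{op}}|h'(t)|^2$, and the third by $\|G^{-1}\|_{\operatorname{op}}^2\|G_{A_{\vartheta}}\|_{\operatorname{op}}|h(t)|^2$, and then integrating over $[0,T]$, yields $L^2([0,T];\mathcal{H})$-bounds in terms of $\sum_k\|h_k\|_{L^2([0,T])}^2$ and $\sum_k\|h_k'\|_{L^2([0,T])}^2$. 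Plugging these into the inequality
\begin{equation*}
    \|g\|_X^2\leq 3\|A_{\vartheta}g\|_{L^2([0,T];\mathcal{H})}^2+\|g\|_{L^2([0,T];\mathcal{H})}^2+2\|g'\|_{L^2([0,T];\mathcal{H})}^2
\end{equation*}
from Theorem \ref{thm:RKHS:SPDE} produces exactly the announced upper bound.

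The identity $H_{X_K}=H^M$ is then immediate: the construction above proves $H^M\subseteq H_{X_K}$, while the reverse inclusion follows because for any $g\in H_X$ the scalar function $t\mapsto\langle g(t),K_k\rangle_{\mathcal{H}}$ is absolutely continuous with derivative $\langle g'(t),K_k\rangle_{\mathcal{H}}\in L^2([0,T])$ by Cauchy--Schwarz. The main obstacle is not computational but conceptual: one must justify that the RKHS transfer principle really applies in the present infinite-dimensional stationary-convolution setting, which a priori forces $K_k\in\mathcal{H}_1'$. This is exactly why the authors signal an alternative, approximation-based proof in Appendix \ref{app:RKHS:measurements:approximation:argument}; once this step is granted, the remainder of the argument is the finite-dimensional linear algebra above.
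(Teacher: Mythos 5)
Your proposal matches the paper's first proof of this theorem almost line for line: realise $X_K=LX$ for the bounded evaluation map $L$, take the explicit preimage $\sum_{k,l}(G^{-1})_{kl}K_kh_l$ of $(h_k)_{k=1}^M$, push the bound of Theorem \ref{thm:RKHS:SPDE} through the transformation principle, and obtain the reverse inclusion by Cauchy--Schwarz on $\sc{g'(t)}{K_k}_{\mathcal{H}}$. You also correctly identify the only real gap in this route --- that it needs $K_k\in\mathcal{H}_1'$ rather than merely $K_k\in\mathcal{D}(A_{\vartheta})$ --- which is exactly the point the paper resolves by the separate approximation argument in Appendix \ref{app:RKHS:measurements:approximation:argument}.
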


    Theorem \ref{thm:upper:bound:RKHS:norm:M>1} (and its slight generalisation in \eqref{eq:upper:bound:RKHS:norm:M>1}) can be used to compute the RKHS of quite general observation schemes. In the specific case $A=\Delta$ and local measurements with $K_k=K_{\delta,x_k}$ we obtain the following. 

	\begin{corollary}\label{cor:upper:bound:RKHS:norm:M>1_Laplace}
    Let $(H_{X_\delta},\|\cdot\|_{X_\delta})$ be the RKHS of $X_\delta$ with respect to $A=\Delta$, $K\in H^2(\R^d)$ with $\norm{K}_{L^2(\R^d)}=1$, and points $x_1,\dots,x_M$ such that $\mathrm{supp}(K_{\delta,x_k})\subset\Lambda$ for all $k=1,\dots,M$ and $\mathrm{supp}(K_{\delta,x_k})\cap\mathrm{supp}(K_{\delta,x_l})=\emptyset$ for all $1\leq k\neq l\leq M$. Suppose that $\delta^2\leq \norm{\Delta K}_{L^2(\R^d)}$ and $T\geq 1$. Then $ H_{X_\delta}=H^M$
    and for $h=(h_k)_{k=1}^M\in H_{X_{\delta}}$
    \begin{align*}
       \|h\|_{X_\delta}^2&\leq 4\frac{\norm{\Delta K}^2_{L^2(\R^d)}}{\delta^{4}}\sum_{k=1}^M\|h_k\|_{L^2([0,T])}^2+2\sum_{k=1}^M\|h_k'\|_{L^2([0,T])}^2.
    \end{align*}
    \end{corollary}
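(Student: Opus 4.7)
The plan is to derive Corollary \ref{cor:upper:bound:RKHS:norm:M>1_Laplace} as a direct specialization of Theorem \ref{thm:upper:bound:RKHS:norm:M>1} to the case $\mathcal{H}=L^2(\Lambda)$, $A_\theta=\Delta$ and $K_k = K_{\delta,x_k}$. The entire argument reduces to computing the Gram matrices $G$ and $G_\Delta$ explicitly, using the disjoint-support hypothesis and the scaling law \eqref{eq:scaling}.

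First, I would exploit Assumption \ref{assu:lowerBound}(iii) to show that $G$ is diagonal: for $k\neq l$, $\mathrm{supp}(K_{\delta,x_k})\cap\mathrm{supp}(K_{\delta,x_l})=\emptyset$, so $\sc{K_{\delta,x_k}}{K_{\delta,x_l}}=0$. For the diagonal entries, the change of variables $u=\delta^{-1}(y-x_k)$ gives $\|K_{\delta,x_k}\|_{L^2(\Lambda)}^2=\|K\|_{L^2(\R^d)}^2=1$. Hence $G=I_M$ and $\|G^{-1}\|_{\operatorname{op}}=1$. In particular, $G$ is non-singular, so the hypotheses of Theorem \ref{thm:upper:bound:RKHS:norm:M>1} are satisfied, and we immediately obtain $H_{X_\delta}=H^M$.

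Next, for $G_\Delta$ I would use the identity $\Delta K_{\delta,x_k}(y)=\delta^{-2}(\Delta K)_{\delta,x_k}(y)$ that follows from differentiating the scaling \eqref{eq:scaling} twice. Since $\Delta K_{\delta,x_k}$ is supported inside $\mathrm{supp}(K_{\delta,x_k})$, the disjoint-support condition again forces $G_\Delta$ to be diagonal, with common diagonal value
\begin{equation*}
    \|\Delta K_{\delta,x_k}\|_{L^2(\Lambda)}^2 = \delta^{-4}\|\Delta K\|_{L^2(\R^d)}^2.
\end{equation*}
Thus $\|G_\Delta\|_{\operatorname{op}}=\delta^{-4}\|\Delta K\|_{L^2(\R^d)}^2$.

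Finally, inserting these two operator norms into the bound of Theorem \ref{thm:upper:bound:RKHS:norm:M>1} yields
\begin{equation*}
    \|h\|_{X_\delta}^2 \leq \bigl(3\delta^{-4}\|\Delta K\|^2_{L^2(\R^d)}+1\bigr)\sum_{k=1}^M\|h_k\|^2_{L^2([0,T])} + 2\sum_{k=1}^M\|h_k'\|^2_{L^2([0,T])}.
\end{equation*}
The hypothesis $\delta^2\leq \|\Delta K\|_{L^2(\R^d)}$ is precisely what is needed to absorb the constant $1$ into the main term, since it implies $1\leq \delta^{-4}\|\Delta K\|^2_{L^2(\R^d)}$, and thereby upgrades the coefficient $3\delta^{-4}\|\Delta K\|^2+1$ to $4\delta^{-4}\|\Delta K\|^2$. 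There is no real obstacle: the work is bookkeeping around the scaling factor $\delta^{-2}$ produced by $\Delta$ and the diagonal structure of both Gram matrices. The condition $T\geq 1$ plays no role in the bound itself and is carried along only for consistency with the companion lower-bound results.
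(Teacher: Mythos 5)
Your proposal is correct and matches the paper's own proof: both specialise Theorem \ref{thm:upper:bound:RKHS:norm:M>1} with $\mathcal{H}=L^2(\Lambda)$, use the disjoint supports to get $G=I_M$ and a diagonal $G_\Delta$ with entries $\delta^{-4}\norm{\Delta K}^2_{L^2(\R^d)}$ via the scaling of $\Delta$, and then absorb the additive constant using $\delta^2\leq\norm{\Delta K}_{L^2(\R^d)}$. The only negligible difference is your aside that $T\geq 1$ plays no role; it is in fact consumed upstream in the proof of the bound in Theorem \ref{thm:RKHS:SPDE}, but this does not affect your application of the theorem as stated.
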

    
	Similar results hold for the RKHS of $(X_\delta, X^A_{\delta})$, see Corollary \ref{cor:lower:bound:RKHS:norm:M>1:mult:measurement}.

    \section{Optimality}\label{sec:optimality}

	In this section, we show that the rates of convergence $M^{1/2}\delta^{1-n_i}$ achieved by the augmented MLE for parameters $\theta_i$ with respect to operators $A_i$ of order $n_i=\operatorname{ord}(A_i)$ are indeed optimal and cannot be improved in our general setup. The proof strategy (presented in Section \ref{sec:proof:optimality:result}) relies on a novel lower bound scheme for Gaussian measures by relating the Hellinger distance of their laws to properties of their RKHS. The Gaussian lower bound is then applied to one-dimensional submodels $(\mathbb{P}_{\theta})_{\vartheta\in\Theta_i}$ with $A_{\theta}$ from \eqref{eq:canonical:model} assuming a sufficiently regular kernel function $K$ and a stationary initial condition. 
	
	\begin{myassumption}{L}\label{assu:lowerBound} Suppose that $\P_{\theta}$ corresponds to the law of the stationary solution $X$ to the SPDE \eqref{eq:SPDE} and assume that the following conditions hold:
	     \begin{enumerate}[label=(\roman*)]
	        \item The kernel function satisfies $K=\Delta^2\tilde{K}$ with $\tilde{K}\in C_c^{\infty}(\R^d)$.
	        \item The models are  $A_{\theta}=\theta_1\Delta+\theta_2(\nabla\cdot b)+\theta_3$ for $\theta\in\R^3$, a fixed unit vector $b\in\R^d$, and where $\theta$ lies in one of the parameter classes
	         	\begin{align*}
	                \Theta_1&=\{\vartheta=(\vartheta_1,0,0):\vartheta_1\geq 1\},\nonumber\\
	                 \Theta_2&=\{\vartheta=(1,\vartheta_2,0):\vartheta_2\in[0,1]\},\nonumber\\
	                 \Theta_3&=\{\vartheta=(1,0,\vartheta_3):\vartheta_3\leq 0\}.
            	\end{align*}
	         \item Let $x_1,\dots,x_M$ be $\delta$-separated points in $\Lambda$, that is, $|x_k-x_l|> \delta$ for all $1\leq k\neq l\leq M$. Moreover, suppose that $\mathrm{supp}(K_{\delta,x_k})\subset\Lambda$ for all $k=1,\dots,M$ and  $\mathrm{supp}(K_{\delta,x_k})\cap\mathrm{supp}(K_{\delta,x_l})=\emptyset$ for all $1\leq k\neq l\leq M$.
	     \end{enumerate}
	\end{myassumption}
	
    The parameter classes $\Theta_i$ correspond to the cases of estimating the diffusivity $\theta_1$, transport coefficient $\theta_2$ and reaction coefficient $\theta_3$ in front of operators $A_i$ with differential orders $n_1=2$, $n_2=1$, $n_3=0$. We start with a non-asymptotic lower bound when only $X_{\delta}$ is observed. 
	
	\begin{theorem}\label{thm:lower:bound:M>1}
	   Grant Assumption \ref{assu:lowerBound} with $M\geq 1$, $T\geq 1$ and let $i\in\{1,2,3\}$. Then there exist constants $c_1,c_2>0$ depending only on $K$ and an absolute constant $c_3>0$ such that the following assertions hold:
	   \begin{itemize}
	       \item[(i)] If $\delta^{n_i-1}/\sqrt{TM}< 1$ and $\delta\leq c_1$, then 
	       \begin{align*}
            \inf_{\hat\vartheta_i}\sup_{\substack{\vartheta\in\Theta_i\\ |\vartheta-(1,0,0)^\top|\leq  c_2\frac {\delta^{n_i-1}}{\sqrt{TM}}}}
            \P_\vartheta\Big(|\hat\vartheta_i-\vartheta_i|\geq \frac{c_2}{2}\frac{\delta^{n_i-1}}{\sqrt{TM}}\Big)>c_3.
            \end{align*}
            \item[(ii)] If $\delta^{n_i-1}/\sqrt{TM}\geq 1$ and $\delta\leq c_1$, then 
            \begin{align*}
            \inf_{\hat\vartheta_i}\sup_{\substack{\vartheta\in\Theta_i\\|\vartheta-(1,0,0)^\top|\leq c_2}}\P_\vartheta(|\hat\vartheta_i-\vartheta_i|\geq c_2/2)>c_3.
            \end{align*}
	   \end{itemize}
        In (i) and (ii), the infimum is taken over all real-valued estimators $\hat\vartheta_i=\hat\vartheta_i(X_{\delta})$.
	\end{theorem}

	Several comments are in order for the above result. First, by Markov's inequality \mbox{Theorem \ref{thm:lower:bound:M>1}} also implies lower bounds for the squared risk. Second, part (ii) detects settings under which consistent estimation is impossible. For instance, if $i=2$, then consistent estimation is impossible for $T=1$ (resp.~$T$ bounded) and $M=1$, that is, if only a single spatial measurement is observed in a bounded time interval. A similar conclusion holds in the case $i=3$, in which case consistent estimation is even impossible in a full observation scheme with $M=\lceil c\delta^{-d}\rceil$ locations for $d\leq 2$ and $T$ bounded. Third, part (i) of \mbox{Theorem \ref{thm:lower:bound:M>1}} shows that the different rates in our CLT are minimax optimal. In particular, it easily implies an asymptotic minimax lower bound when $\delta\rightarrow 0$. A first important case is $M=1$ and $i=1$ in which case Theorem \ref{thm:lower:bound:M>1} also follows from Proposition 5.12 in \cite{altmeyer_nonparametric_2020} and gives the rate of convergence $\delta$. For $M=\lceil c\delta^{-d}\rceil$ we get the following.
	
	\begin{corollary}\label{cor:lower:bound:M>1}
	Grant Assumption \ref{assu:lowerBound} with $M=\lceil c\delta^{-d}\rceil$, $\delta\rightarrow 0$ and $T\geq 1$, and let $i\in\{1,2,3\}$.
    If $n_i-1+d/2>0$, then  
    \[
    \liminf_{\delta\rightarrow 0}\inf_{\hat{\theta}_{i}}\sup_{|\theta-(1,0,0)^\top|\leq c_1}\P_{\theta}(\delta^{-n_i+1-d/2}|\hat{\theta}_{i}-\theta_{i}|\geq c_{2})>0,
    \]
    where the infimum is taken over all real-valued estimators $\hat\vartheta_i=\hat\vartheta_i(X_{\delta})$. 
    \end{corollary}
    
    Similar optimality results have been derived in \cite{huebner_asymptotic_1995} for the case of $M$ spectral measurements. Provided there exists an orthonormal basis of eigenfunctions $(e_j)_{j=1}^\infty$ of $A_{\theta}$ independent of $\vartheta$ (e.g., in the case $i=1$ or $i=3$), it is possible to estimate $\vartheta_i$ from $M$ spectral measurements $(\sc{X(t)}{e_j})_{0\leq t\leq T,1\leq j\leq M}$ with rates $M^{-\tau}$ or  $\log M$ if $\tau=n_i/d-1/d+1/2>0$ or $\tau=0$, respectively. Consistent estimation fails to hold for $\tau<0$.
    While \cite{huebner_asymptotic_1995} obtained asymptotic efficiency by combining Girsanov's theorem with LAN techniques, these rates can also be derived from Lemma \ref{lem:RKHS:projected:SPDE} combined with a version of Lemma \ref{lem:seriesBound:M>1}. For $\delta=cM^{-1/d}$ the rate in Corollary \ref{cor:lower:bound:M>1} and Theorem \ref{thm:clt} coincides with $M^{-\tau}$ if $\tau>0$, and $\tau=0$ is again a boundary case. Regarding the latter case, we briefly discuss in Section \ref{sec:examples} that a non-negative point spread function achieves the $\log M$-rate when $i=3$ and $d=2$.

	Recall that the augmented MLE  $\hat{\theta}_{\delta}$ depends also on the measurements $X^A_{\delta}$. We show next that including them into the lower bounds does not change the optimal rates of convergence.

	\begin{theorem}\label{thm:lower:bound:M>1:add:measurements}
	Theorem \ref{thm:lower:bound:M>1} remains valid when the infimum is taken over all real-valued estimators $\hat\vartheta_i=\hat\vartheta_i(X_{\delta},X_{\delta}^\Delta,X_{\delta}^{\nabla\cdot b})$, provided that $K$, $\Delta K$ and $(\nabla\cdot b) K$ are linearly independent and Assumption \ref{assu:lowerBound} holds for  $K$, $\Delta K$ and $(\nabla\cdot b) K$.
	\end{theorem}

    \section{Applications and extensions}\label{sec:examples}
    
    \subsection{Examples}
    
   	Let us illustrate the main results in two examples.
  
   	\begin{example}
   	\label{ex:1}
		Suppose $A_\theta=\theta_1\Delta+\theta_2\nabla\cdot b+c$ for $\theta_1>0$. This corresponds to \eqref{eq:generalA} with $A_0=c$, $A_1=\Delta$, $A_2=\nabla\cdot b$ for $c\in\R$ and a unit vector $b\in\R^d$, and with differential orders $n_1=2$, $n_2=1$. A typical realisation of the solution $X$ in $d=1$ can be seen in Figure \ref{fig: heatmap_RMSE}(left). For known $c$, the augmented MLE $\hat{\theta}_{\delta}$ is a consistent estimator of $\theta\in\R^2$ by \mbox{Theorem \ref{thm:clt}}, attaining the optimal rates of convergence $M^{1/2}\delta^{-1}$, $M^{1/2}$ for the diffusivity and the transport terms, respectively according to the lower bounds in Theorem \ref{thm:lower:bound:M>1}. If we suppose for simplicity $\norm{K}_{L^2(\R^d)}=1$, then the CLT holds with a diagonal matrix $$\Sigma_{\theta} = \frac{T}{2\theta_1}\operatorname{diag}\left(\norm{\nabla K}^2_{L^2(\R^d)}, \norm{(-\Delta_0)^{-1/2}(\nabla\cdot b) K}^2_{L^2(\R^d)}\right),$$implying that $\hat{\theta}_{\delta,1}$ and $\hat{\theta}_{\delta,2}$ are asymptotically independent. 
		
		Figure \ref{fig: heatmap_RMSE}(right) presents root mean squared  errors in $d=1$ for local measurements obtained from the data displayed in the left part of the figure with  $K(x)=\exp(-5/(1-x^2))\mathbf{1}(-1<x<1)$ and the maximal choice of $M\asymp\delta^{-1}$. We see that the optimal rates of convergence, and even the exact asymptotic variances (blue dashed lines) are approached quickly as $\delta\rightarrow 0$. For comparison, we have included in Figure \ref{fig: heatmap_RMSE}(right) estimation errors for an estimator $\bar{\theta}_{\delta}$ without the correction factor depending on the lower order `nuisance operator' $A_0$ in \eqref{eq:augMLE}. We can see that this introduces only a small bias, which is negligible as $\delta\rightarrow 0$.
   	\end{example}
   	
   	\begin{example}\label{ex:2}
   		Consider now  $A_{\theta}=\theta_1\Delta + \theta_2\nabla\cdot b + \theta_3$ such that $A_1$, $A_2$ are as in the last example, but now also $A_0 =0$, $A_3 = 1$ with $n_3=0$. If $d\geq 3$ and $M^{1/2}\delta\rightarrow\infty$, then the CLT in Theorem \ref{thm:clt} applies with optimal rates of convergence as in the last example for $\theta_{\delta,1}$, $\theta_{\delta,2}$ and with rate $M^{1/2}\delta$ for the reaction term $\theta_3$. Using integration by parts we find
   	        \begin{equation*}
    			\Sigma_{\theta} = \frac{T}{2\theta_1}\begin{pmatrix}
                    \norm{\nabla K}^2_{L^2(\R^d)} & 0 & -1\\
                    0 & \norm{(-\Delta_0)^{-1/2}(\nabla\cdot b) K}^2_{L^2(\R^d)} & 0\\
                    -1 & 0 & \norm{(-\Delta_0)^{-1/2}K}^2_{L^2(\R^d)}
                    \end{pmatrix},
		    \end{equation*} 
        so we have pairwise asymptotic independence of diffusion and transport estimators, as well as of transport and reaction estimators. Similar numerical results as in the first example were obtained, but details are omitted.
    \end{example}

		\begin{figure}
        \centering
        \begin{subfigure}{.49\textwidth}
    	    \centering\includegraphics[width=1.\linewidth]{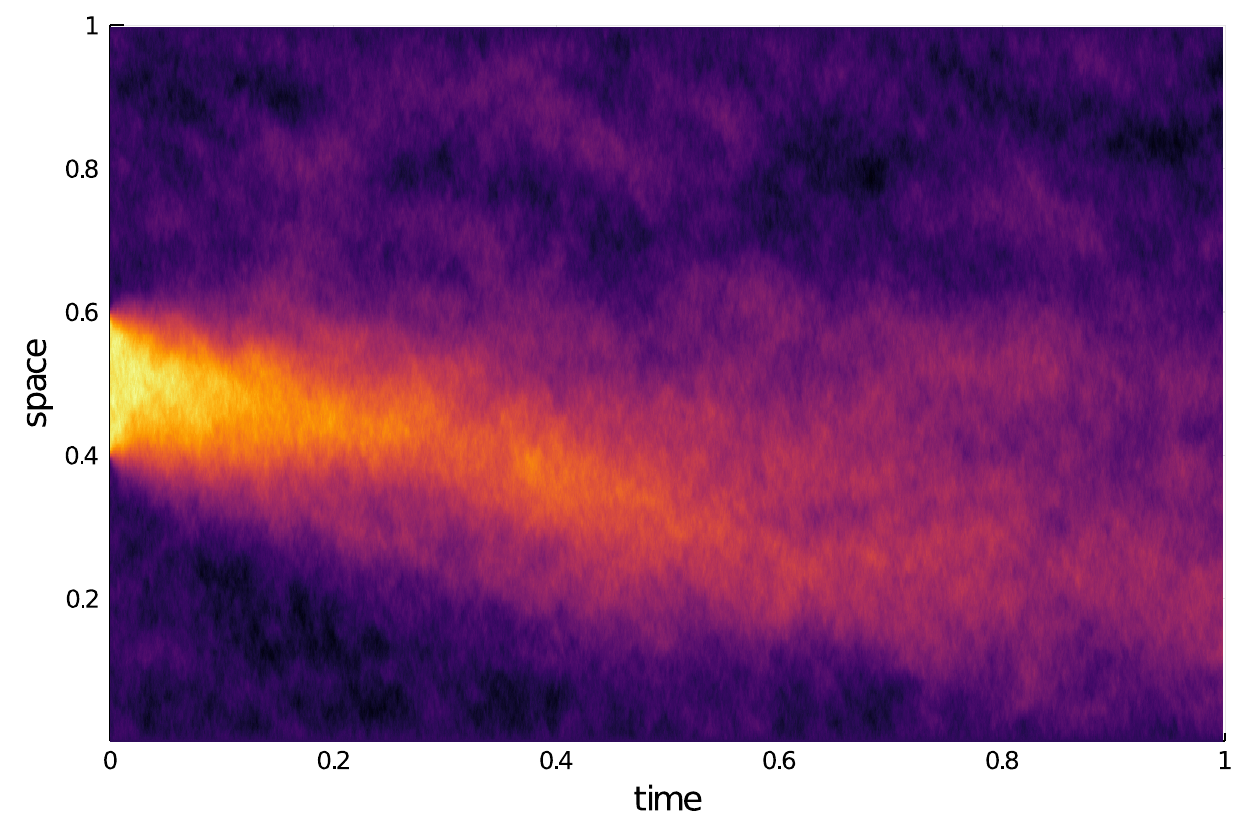}
    	\end{subfigure}
        \begin{subfigure}{.49\textwidth}
    	      \centering\includegraphics[width=1.\linewidth]{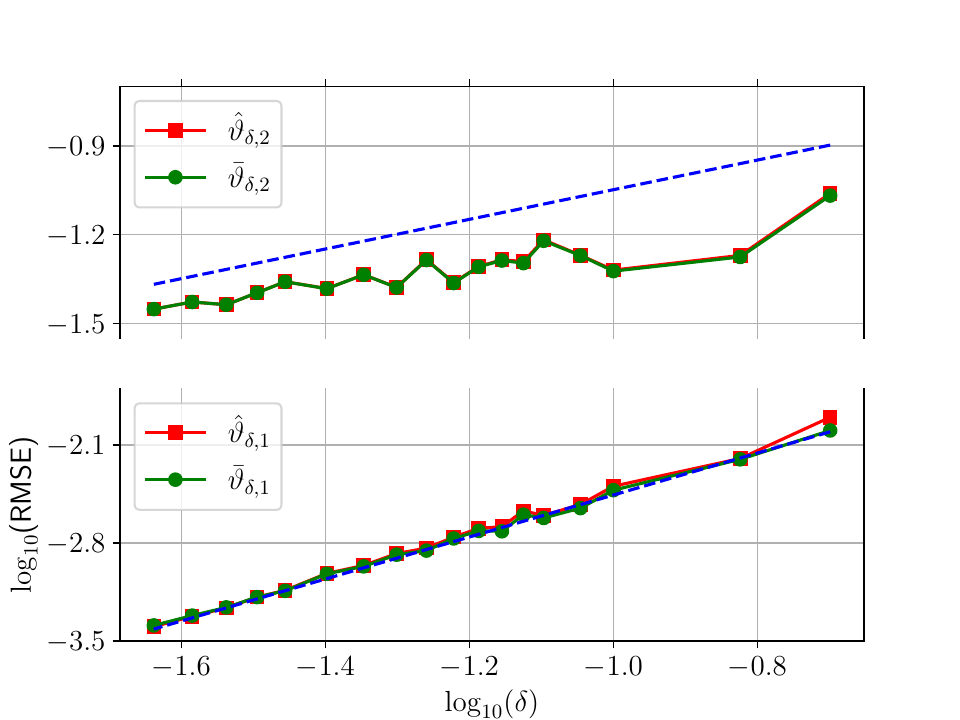}
        \end{subfigure}
        \caption{(left) heat map for a typical realisation of $X(t,x)$ corresponding to \eqref{eq:SPDE} in $d=1$ with domain $\Lambda=(0,1)$ in Example \ref{ex:1}; (right) $\log_{10}\log_{10}$ plot of the root mean squared errors for estimating $\theta_1$ and $\theta_2$ in Example \ref{ex:1}.}
        \label{fig: heatmap_RMSE}
    \end{figure}

	\subsection{A boundary case: estimation in $d=2$}

    Theorem \ref{thm:clt} is not valid for $d\leq 2$ and reaction terms $\theta_i$ with differential order $n_i=0$. The singularities of the heat kernel on $\R^d$ in $d\leq 2$ (cf. the discussion before Theorem \ref{thm:clt}) can be avoided for sufficiently regular $K$, e.g., by assuming $K=\Delta \tilde{K}$ for some $\tilde{K}\in H^4(\R^d)$. In that case, the CLT still holds with the same proof, but consistency towards $\theta_i$ is lost, because $M^{1/2}\delta$ does not diverge. Nevertheless, we show now that in $d=2$ for non-negative $K$, a logarithmic rate holds. This is consistent with results for the MLE from spectral observations in $d=2$, cf. \cite{huebner_asymptotic_1995}. For a proof see Section\ref{sec: reaction}. For simplicity, only a simplified model is considered.
	
	\begin{proposition}\label{prop: reactiond2}
	Suppose that $d=2$, $A_{\theta}=\Delta+\theta$ for $\theta\in\R$, $X(0)=0$ and $M\delta^2\rightarrow 1$ as $\delta\rightarrow 0$. If $K\geq 0$ and $K\neq 0$, then
	        $\hat{\theta}_\delta=\theta+O_{\P}(\log(\delta^{-1})^{-1/2})$.
	\end{proposition}

\subsection{Practical aspects}\label{sec:Practical_aspects} 

    In this section, we outline a precise situation where local measurements arise, and how the augmented MLE can be applied, even if the additional measurements $X^{A_i}_{\delta,k}$ are not available.
    
    Optical measurements of physical or chemical concentrations $X(t)$ at a focal point $x_k\in\Lambda$ are obtained as (normalised) counts of certain markers, e.g., photons \cite{egner2020}. According to classical microscopy \cite{kulaitis_what_2020}, diffraction leads to a blurred image of $X(t)$, and the blur pattern can be described by convolution with a specific point spread function, which can be written as 
    \begin{align}
        X_{\delta,k}(t)=\sc{X(t)}{K_{\delta,x_k}}=(X(t)*\bar{K}_{\delta})(x_k),\qquad \bar{K}_{\delta}(y)=\delta^{d/2}K(-\delta^{-1}y).\label{eq:locMeas}
    \end{align}
    It is reasonable to assume that the additional measurement noise due to photon counting is negligible and that measuring happens on faster time scales than the dynamics of $X$. 

    The resolution $\delta$ is specific to the measurement device and determines how far focal points can be apart to distinguish them \cite[Definition 2.5]{kulaitis_what_2020}. The point spread function depends inversely on $\delta$, and is often approximated by a normal density with standard deviation  $\delta$ \cite{aspelmeier_modern_2015}. This phenomenon is the source for the large number of statistical works on Gaussian deconvolution. In applications, both the point spread function and the resolution $\delta$ are usually known, and can even be engineered to meet desired specifications \cite{backer_extending_2014}. Note that multiplicative constants such as the scaling of $K_{\delta,x_k}$ cancel out in the augmented MLE and therefore play no role for parameter estimation. 

    If we have (time discrete) local measurements \eqref{eq:locMeas} at our disposal, then  exchanging differentiation and convolution gives$$X^{A_i}_{\delta,k}=(X(t)*A_i^*\bar{K}_{\delta})(x_k)=A_i^* (X(t)*\bar{K}_{\delta})(x_k).$$This can be approximated by finite differences. For example, if $A_i=\Delta$ and $x_{k-1}=x_k-\delta e_i$, $x_{k+1}=x_k+\delta e_i$ are `neighbours' of $x_k$ in the $i$-th coordinate with the unit vector $e_i$, separated by a distance $\delta$, then $X^{A_i}_{\delta,k}(t)$ can be approximated by $\delta^{-2}(X_{\delta,k+1}(t)-2X_{\delta,k}(t)+X_{\delta,k-1}(t))$. Using suitable Riemann sum approximations for Lebesgue and stochastic integrals, we thus obtain a discretised version of the augmented MLE $\hat{\theta}_{\delta}$. 

    While a full analysis of such discretisation schemes is beyond the scope of this paper, we shortly report on a recent case study for cell motility, using the augmented MLE for real and simulated data \cite[Sections 5 and 6]{altmeyer_parameter_2020}. There, the first component of a coupled stochastic activator-inhibitor system $(X_1,X_2)$ follows a semi-linear SPDE with diffusity $\theta$, reaction function $f$ and noise level $\sigma>0$,
    \begin{align*}
        dX_1(t) = (\theta \Delta X_1(t) + f(X_1(t),X_2(t)))dt+\sigma dW(t).
    \end{align*}
    The equation models the change in actine concentration along the cell cortex during cell repolarisation. In \cite{lockley_image-based_2017}, on a time grid of up to 256 seconds $M=100$ measurements for 18 different cells, expected to have about the same diffusivities, were used to fit parameters in the deterministic PDE with $\sigma=0$. In \cite{altmeyer_parameter_2020}, the same data were taken as local measurements from $X_1(t)$, and $\theta$ was estimated by the discretised augmented MLE as discussed above, providing a biologically reasonable magnitude for $\theta$, which can be used to distinguish the mechanisms contributing to diffusion. The resolution $\delta$ was found as an upper bound on the spatial mesh size. The estimates are stable across the cell populations as opposed to \cite{lockley_image-based_2017}, which averaged the different estimates across cells to reduce `noise', and obtained in this way a much inflated average diffusivity.

	\section{Proofs}\label{sec:proofs}

	\subsection{Proof of the central limit theorem}

    \subsubsection{Preliminaries}
    
	We write $\Lambda_{\delta,x}  = \{\delta^{-1}(y-x):y\in\Lambda\}$, $\Lambda_{0,x}=\mathbb{R}^d$ and introduce with domains $H_0^1(\Lambda_{\delta,x})\cap H^2(\Lambda_{\delta,x})$  the operators 
    \begin{align}
	     A_{\theta,\delta,x} &= \nabla \cdot a_{\theta} \nabla + \delta \nabla\cdot b_{\theta} + \delta^2 c_{\theta}, \quad \tilde{A}_{\theta,\delta,x} = \nabla \cdot a_{\theta} \nabla\label{eq:A_local}.
	\end{align}
	They generate the analytic semigroups $(S_{\theta,\delta,x}(t))_{t\geq 0}$ and $(\tilde{S}_{\theta,\delta,x}(t))_{t\geq 0}$ on $L^2(\Lambda_{\delta,x})$. Similarly, the adjoint operators $A_{\theta,\delta,x}^*$ and $\tilde{A}_{\theta,\delta,x}^*$ generate with the same domains the adjoint semigroups $(S_{\theta,\delta,x}^*(t))_{t\geq 0}$ and $(\tilde{S}^*_{\theta,\delta,x}(t))_{t\geq 0}$. When $a_{\theta}$ is the identity matrix, then we also write $\Delta_{\delta,x}=\tilde{A}_{\theta,\delta,x}$ and $e^{t\Delta_{\delta,x}}=\tilde{S}_{\theta,\delta,x}(t)$. Moreover, $e^{t\Delta_0}$ and $e^{t\nabla\cdot a_{\theta}\nabla}$ are semigroups on $L^2(\R^d)$ generated by $\Delta_0$ and $\nabla\cdot a_{\theta}\nabla$, respectively, with domain $H^2(\R^d)$.  We often use implicitly that $z\in L^2(\Lambda_{\delta,x})$ extends to an element of $L^2(\R^d)$ by setting $z(y)=0$ outside of $\Lambda_{\delta,x}$. The $A_i$ and their formal adjoints $A_i^*$ are considered as differential operators on sufficiently weakly differentiable functions without boundary conditions.

	\subsubsection{The rescaled semigroup}\label{sec:rescaledSemigroup}

 In this section we collect some results on the semigroup operators $S_{\theta}(t)$ and their actions on localised functions $z_{\delta,x}(\cdot)=\delta^{-d/2}z(\delta^{-1}(\cdot-x))$.

    By a standard-PDE result (see, e.g., \cite[Example III.6.11]{kato2013perturbation} or \cite[equation (5.1)]{reddy1994pseudospectra}), the operator $A_{\theta,\delta,x}$ and the generated semigroup are diagonalizable \cite[Example 2.1 in Section II.2]{EngNag00}. This yields the useful representations 
	\begin{align}
	    A_{\theta,\delta,x}=U_{\theta,\delta,x}(\tilde{A}_{\theta,\delta,x} +\delta^2\tilde{c}_{\theta})U_{\theta,\delta,x}^{-1},\quad S_{\theta,\delta,x}(t)=e^{t\delta^2\tilde{c}_{\theta}}U_{\theta,\delta,x}\tilde{S}_{\theta,\delta,x}(t)U_{\theta,\delta,x}^{-1}\label{eq:diagonalizable}
	\end{align}
	with the multiplication operators $U_{\theta,\delta,x}z(y)=\exp(-(a_\theta^{-1}b_\theta)\cdot (\delta y+x)/2)z(y)$ and with $\tilde{c}_\theta=c_\theta -\frac{1}{4}b_\theta\cdot (a_\theta^{-1}b_\theta)$. Observe the following scaling properties.

    \begin{lemma}\label{rescaledsemigroup}
        Let $\delta'\geq \delta\geq 0$, $x\in\Lambda$, $i=1,\dots,p$. 
        \begin{enumerate}[label=(\roman*)]
            \item If $z\in H_0^1(\Lambda_{\delta,x})\cap H^2(\Lambda_{\delta,x})$, then $A_i^*z_{\delta,x}=\delta^{-n_i}(A_i^*z)_{\delta,x}$, $A^*_{\theta}z_{\delta,x}=\delta^{-2}(A^*_{\theta,\delta,x}z)_{\delta,x}$.
            \item If $z\in L^2(\Lambda_{\delta,x})$, $t\geq 0$, then  $S_{\theta}^*(t)z_{\delta,x}=(S_{\theta,\delta,x}^*(t\delta^{-2})z)_{\delta,x}$.
        \end{enumerate}
    \end{lemma}
    \begin{proof}
        Part (i) is clear, part (ii) follows analogously to \cite[Lemma 3.1]{altmeyer_nonparametric_2020}. 
    \end{proof}
     The semigroup on the bounded domain $\Lambda_{\delta,x}$ is after zooming in as $\delta\rightarrow 0$ intuitively close to the semigroup on $\R^d$. The next result makes this precise, uniformly in $x\in\mathcal{J}$.
	
	\begin{lemma}
		\label{FeynmanKac}
		Under Assumption \ref{assump:mainAssump}(iii) the following holds:
		\begin{itemize}
			\item [(i)] There exists $C>0$ such that if $z\in C_c(\mathbb{R}^d)$ is supported in $\bigcap_{x\in \mathcal{J}}\Lambda_{\delta,x}$ for some $\delta\geq 0$, then for all $t\geq 0$ 
			\begin{equation*}
				\sup_{x\in \mathcal{J}}\left|(S^*_{\vartheta,\delta,x}(t)z)(y)\right|\leq Ce^{\tilde{c}_\theta t\delta^2}(e^{t\nabla\cdot a_{\theta}\nabla}|z|)(y),\quad y\in\R^d.
			\end{equation*}
			\item [(ii)] If $z\in L^2(\mathbb{R}^d)$, then as $\delta \rightarrow 0$ for all $t>0$
			\begin{equation*}
				\sup_{x\in \mathcal{J}}\Norml S^*_{\vartheta,\delta,x}(t)(z|_{\Lambda_{\delta,x}})-e^{t\nabla\cdot a_{\theta}\nabla}z\Normr_{L^2(\mathbb{R}^d)}\rightarrow0.
			\end{equation*}
		\end{itemize}
		\end{lemma}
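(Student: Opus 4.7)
The plan is to exploit a rescaled analogue of the diagonalisation \eqref{eq:diagonalizable}. A direct computation shows that the multiplication operator $U_\delta$ defined by $(U_\delta g)(y) = \exp(-\frac{\delta}{2}(a_\theta^{-1}b_\theta)\cdot y)g(y)$ satisfies $A_{\theta,\delta,x} = U_\delta(\tilde A_{\theta,\delta,x} + \delta^2 \tilde c_\theta)U_\delta^{-1}$ on $\Lambda_{\delta,x}$; crucially, $U_\delta$ does not depend on $x$. Using that $\tilde A_{\theta,\delta,x}=\nabla\cdot a_\theta\nabla$ is self-adjoint (so $\tilde S^*_{\theta,\delta,x}=\tilde S_{\theta,\delta,x}$) and that $U_\delta=U_\delta^*$, I get
\begin{equation*}
S^*_{\theta,\delta,x}(t) \,=\, e^{\delta^2 t \tilde c_\theta}\, U_\delta^{-1}\, \tilde S_{\theta,\delta,x}(t)\, U_\delta,
\end{equation*}
so all $x$-dependence enters only through the Dirichlet domain $\Lambda_{\delta,x}$.

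For part (i) the plan is to bound the Dirichlet heat kernel $\tilde p_t^{\Lambda_{\delta,x}}(y,y')$ of $\tilde A_{\theta,\delta,x}$ by the full-space kernel $\bar p_t(y,y')$ of $\bar A_\theta$ (standard Dirichlet comparison), and then to complete the square in the resulting exponential. This yields
\begin{equation*}
|(S^*_{\theta,\delta,x}(t)z)(y)| \leq e^{\delta^2 t\tilde c_\theta}\int_{\R^d}\bar p_t(y,y')\,e^{\frac{\delta}{2}(a_\theta^{-1}b_\theta)\cdot(y-y')}|z(y')|\,dy' = e^{\delta^2 t c_\theta}\,(\bar S_\theta(t)|z|)(y-\delta t b_\theta),
\end{equation*}
using $\tilde c_\theta + \frac{1}{4}b_\theta\cdot a_\theta^{-1}b_\theta = c_\theta$. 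The remaining drift shift $y\mapsto y-\delta t b_\theta$ then has to be absorbed into the absolute constant $C$ by comparing shifted and unshifted Gaussian kernels; for bounded $\delta$ this gives a uniform factor independent of $x,y,t$, yielding the stated bound (the exponential $e^{\delta^2 t \tilde c_\theta}$ appearing in the formulation then follows from recombining the $c_\theta - \tilde c_\theta$ contribution with the Gaussian comparison).

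For part (ii), the same identity lets me decompose
\begin{equation*}
S^*_{\theta,\delta,x}(t)(z|_{\Lambda_{\delta,x}}) - \bar S_\theta(t)z = e^{\delta^2 t\tilde c_\theta}U_\delta^{-1}\tilde S_{\theta,\delta,x}(t)U_\delta(z|_{\Lambda_{\delta,x}}) - \bar S_\theta(t)z.
\end{equation*}
The scalar prefactor tends to $1$. Because $\Lambda_{\delta,x}$ is contained in the ball of radius $\mathrm{diam}(\Lambda)/\delta$, the multiplier $U_\delta$ is uniformly bounded on $\Lambda_{\delta,x}$ by a constant depending only on $\Lambda$ and $\theta$, and combined with $U_\delta\to 1$ pointwise on compacta, dominated convergence gives $U_\delta(z|_{\Lambda_{\delta,x}})\to z$ in $L^2(\R^d)$, first for $z\in C_c(\R^d)$ and then for $z\in L^2(\R^d)$ by density. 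The main analytic input is the strong $L^2$-convergence $\tilde S_{\theta,\delta,x}(t)g\to \bar S_\theta(t)g$ as the Dirichlet domain expands to $\R^d$, a classical monotone convergence result for Dirichlet forms. Uniformity in $x\in\mathcal{J}$ is secured by $\mathrm{dist}(\mathcal{J},\partial\Lambda)>0$: any compact $K\subset\R^d$ lies in $\Lambda_{\delta,x}$ for every $x\in\mathcal{J}$ once $\delta$ is small enough.

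The main obstacle I expect is precisely this uniformity in $x\in\mathcal{J}$, both in the constant for (i) and in the rate in (ii). Since all $x$-dependence funnels through the Dirichlet domain $\Lambda_{\delta,x}$, it is the compactness of $\mathcal{J}$ together with its separation from $\partial\Lambda$ that rescues the argument. A secondary delicate point is handling the drift shift $\delta t b_\theta$ in (i) so that its contribution is absorbed into the absolute constant $C$ compatibly with the prescribed exponential factor $e^{\delta^2 t \tilde c_\theta}$.
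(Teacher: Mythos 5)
Your part (ii) is essentially the paper's argument in different clothing: the paper uses the Feynman--Kac representation and bounds the exit probability from a common ball $B_{\rho\delta^{-1}}\subset\bigcap_{x\in\mathcal J}\Lambda_{\delta,x}$ (this is exactly your ``every compact set is eventually contained in all $\Lambda_{\delta,x}$'' observation), then handles $U_{\delta}\to 1$ and concludes by dominated convergence with the majorant from (i). That part is sound, modulo the caveat that the ``classical monotone convergence of Dirichlet forms'' is usually stated for a single increasing sequence of domains; to get uniformity over $x\in\mathcal J$ you need the sandwich $\tilde p_t^{B_{\rho/\delta}}\leq \tilde p_t^{\Lambda_{\delta,x}}\leq \bar p_t$ by a common inner ball, which your compactness remark supplies.

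Part (i), however, has a genuine gap at the final step. After completing the square you arrive at $e^{\delta^2 t c_\theta}(\bar S_\theta(t)|z|)(y-\delta t b_\theta)$ and claim the shift can be absorbed into an absolute constant ``independent of $x,y,t$'' by comparing shifted and unshifted Gaussian kernels. This is false as stated: for $y'$ in the (compact) support of $z$ and $y\in\R^d$ arbitrary, the kernel ratio is
\begin{equation*}
\frac{\bar p_t(y-\delta t b_\theta,y')}{\bar p_t(y,y')}=\exp\Big(\tfrac{\delta}{2}\,b_\theta\cdot a_\theta^{-1}(y-y')-\tfrac{\delta^2 t}{4}\,b_\theta\cdot a_\theta^{-1}b_\theta\Big),
\end{equation*}
which is unbounded in $y$ (and note $\delta t$ is not small either, since $t$ ranges up to $T\delta^{-2}$ in the applications). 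The repair is to observe that $(S^*_{\theta,\delta,x}(t)z)(y)=0$ for $y\notin\Lambda_{\delta,x}$, so one may restrict to $y,y'\in\Lambda_{\delta,x}$, where $\delta|y-y'|\leq\operatorname{diam}(\Lambda)$ and hence $e^{\frac{\delta}{2}b_\theta\cdot a_\theta^{-1}(y-y')}\leq C(\Lambda,\theta)$; the leftover $e^{-\frac{\delta^2t}{4}b_\theta\cdot a_\theta^{-1}b_\theta}$ then converts $e^{\delta^2tc_\theta}$ into the prescribed $e^{\delta^2t\tilde c_\theta}$. This confinement is precisely the uniform upper and lower boundedness of $U_\theta^{\pm1}$ on the bounded domain $\Lambda$ that the paper invokes \emph{before} applying Feynman--Kac domination, which makes the completing-the-square detour (and the drift shift it creates) unnecessary: one simply bounds $|U^{-1}\tilde S_{\theta,\delta,x}(t)Uz|\lesssim \tilde S_{\theta,\delta,x}(t)|z|\leq \bar S_\theta(t)|z|$ directly.
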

		\begin{proof}
		    (i). By \eqref{eq:diagonalizable} and noting that the function $y\mapsto \exp(-(a_\theta^{-1}b_\theta)\cdot (\delta y+x)/2)$ is uniformly upper and lower bounded on $\bigcap_{x\in\mathcal{J}}\Lambda_{\delta,x}$, we get 
            \begin{equation*}
				\sup_{x\in \mathcal{J}}\left|(S^*_{\vartheta,\delta,x}(t)z)(y)\right|\lesssim  e^{t\delta^2\tilde{c}_{\theta}}(\tilde{S}_{\vartheta,\delta,x}(t)|z|)(y),\quad y\in\R^d.
			\end{equation*}
		    It is therefore enough to prove the claim with respect to $\tilde{S}_{\theta,\delta,x}$ and with $|z|$ instead of $z$. By the classical Feynman-Kac formulas (cf. \cite[Chapter 4.4]{karatzas_brownian_1998}, the anisotropic case is an easy generalisation, which can also be obtained by a change of variables leading to a diagonal diffusivity matrix $a_{\theta}$, which corresponds to $d$ scalar heat equations) we have with a process $Y_t=y+\at^{1/2}\tilde{W}_t$ and a $d$-dimensional Brownian motion $\tilde{W}$, all defined on another probability space with expectation and probability operators $\tilde{\E}_y$, $\tilde{\P}_y$, that $(e^{t\nabla\cdot a_{\theta}\nabla}z)(y)=\tilde{\mathbb{E}}_y[z(Y_t)]$ and 			$\tilde{S}_{\vartheta,\delta,x}(t)z(y)=\tilde{\mathbb{E}}_y\left[z(Y_t)\mathbf{1}\left(t<\tau_{\delta,x}\right)\right]$ with the stopping times $\tau_{\delta,x}:=\inf\{t\geq0:Y_t\notin\Lambda_{\delta,x}\}$. The claim follows now from
		    \begin{equation*}
				\sup_{x\in \mathcal{J}}(\tilde{S}_{\theta,\delta,x}(t)|z|)(y)\leq \tilde{\mathbb{E}}_y[|z(Y_t)|]=(e^{t\nabla\cdot a_{\theta}\nabla}|z|)(y).
			\end{equation*}
		    
		    (ii). By an approximation argument it is enough to consider $z\in C_c(\bar{\Lambda})$ and $0<\delta\leq\delta'$ such that $z$ is supported in $\Lambda_{\delta',x}$, hence $z|_{\Lambda_{\delta,x}}=z$ for all such $\delta$. Compactness of $\mathcal{J}$ according to Assumption \ref{assump:mainAssump}(iii) guarantees for sufficiently small $\delta$ the existence of a ball with centre $0$ and radius $\rho\delta^{-1}$ for some $\rho>0$, contained in $\bigcap_{x\in\mathcal{J}}\Lambda_{\delta,x}$. With this and the representation formulas in (i), combined with the Cauchy-Schwarz inequality, we have for all $y\in\R^d$ 
			\begin{align*}
			    & \sup_{x\in\mathcal{J}}|(\tilde{S}_{\vartheta,\delta,x}(t)z)(y)-(e^{t\nabla\cdot a_{\theta}\nabla}z)(y)|^2=\sup_{x\in\mathcal{J}}|\tilde{\mathbb{E}}_y\left[z(Y_t)\mathbf{1}(\tau_{\delta,x}\leq t)\right]|^2\nonumber\\
				& \leq \sup_{x\in\mathcal{J}}\tilde{\E}_y[z^2(Y_t)]\tilde{\mathbb{P}}_{y}(\tau_{\delta,x}\leq t)\leq (e^{t\nabla\cdot a_{\theta}\nabla}z^2)(y)\tilde{\mathbb{P}}_{y}(\max_{0\leq s\leq t} |Y_s| \geq \rho\delta^{-1})\nonumber\\
				& \leq (e^{t\nabla\cdot a_{\theta}\nabla}z^2)(y)\tilde{\mathbb{P}}_{y}(\max_{0\leq s\leq t} |\tilde{W}_s| \geq \tilde{\rho}\delta^{-1})\lesssim (e^{t\nabla\cdot a_{\theta}\nabla}z^2)(y)(\delta t^{1/2} e^{-\delta^{-2}t^{-1}})\rightarrow 0
			\end{align*} 
			as $\delta\rightarrow 0$ for another constant $\tilde{\rho}$, concluding by \cite[equation (2.8.4)]{karatzas_brownian_1998}. Since $\norm{e^{t\nabla\cdot a_{\theta}\nabla}z^2}_{L^1(\R^d)}\leq \norm{z}^2_{L^2(\R^d)}$, dominated convergence proves the claim when $b_{\theta}=0$, $c_{\theta}=0$. The general case is then an easy consequence of the last display and \eqref{eq:diagonalizable}.
		\end{proof}

	We require frequently quantitative statements on the decay of the action of the semigroup operators $S^*_{\theta,\delta,x}(t)$ as $t\rightarrow\infty$ when applied to functions of a certain smoothness and integrability. This is well-known for an analytic semigroup, but is shown here to hold true for all $\delta$  and uniformly in $x\in\mathcal{J}$.
	
	\begin{lemma}\label{lem:semigroupProp}
	    Let $0\leq \delta \leq 1$, $t>0$, $x\in \mathcal{J}$ and $1< p\leq \infty$. Moreover, let $z\in L^p(\Lambda_{\delta,x})$ if $1<p<\infty$ and $z\in C(\Lambda_{\delta,x})$ with $z=0$ on $\partial \Lambda_{\delta,x}$ if $p=\infty$. Then it holds with implied constants not depending on $x$:
	    \begin{align*}
	        \norm{A^*_{\theta,\delta,x}S^*_{\theta,\delta,x}(t)z}_{L^p(\Lambda_{\delta,x})}\lesssim t^{-1}\norm{z}_{L^p(\Lambda_{\delta,x})}.
	    \end{align*}
	\end{lemma}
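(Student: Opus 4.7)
The idea is to strip off the lower-order parts of $A_{\vartheta,\delta,x}$ by a gauge transformation, reducing to the Dirichlet realisation of the principal divergence-form operator $\tilde A_{\vartheta,\delta,x}=\nabla\cdot a_\vartheta\nabla$, for which the estimate is classical and can be made uniform in the shape of the domain. First I would rescale the diagonalisation \eqref{eq:diagonalizable} to get
\[
A^*_{\vartheta,\delta,x} = U_{\vartheta,\delta,x}^{-1}\bigl(\tilde A_{\vartheta,\delta,x}+\delta^2\tilde c_\vartheta\bigr)U_{\vartheta,\delta,x},\qquad
S^*_{\vartheta,\delta,x}(t) = e^{t\delta^2\tilde c_\vartheta}\,U_{\vartheta,\delta,x}^{-1}\tilde S_{\vartheta,\delta,x}(t)U_{\vartheta,\delta,x},
\]
where the multiplication operator is $U_{\vartheta,\delta,x}(y)=\exp\bigl(-\tfrac{\delta}{2}(a_\vartheta^{-1}b_\vartheta)\cdot y\bigr)$. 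Whenever $y\in\Lambda_{\delta,x}$ one has $\delta y+x\in\Lambda$, so $\delta y$ stays in a bounded set uniformly in $\delta\le 1$ and $x\in\mathcal J$; hence $U_{\vartheta,\delta,x}^{\pm 1}$ are uniformly bounded multiplication operators on $L^p(\Lambda_{\delta,x})$, and the scalar $e^{t\delta^2\tilde c_\vartheta}$ is bounded by a constant on bounded time intervals. The claim therefore reduces to the two domain-uniform bounds
\[
\bigl\|\tilde A_{\vartheta,\delta,x}\tilde S_{\vartheta,\delta,x}(t)\bigr\|_{L^p(\Lambda_{\delta,x})\to L^p(\Lambda_{\delta,x})}\lesssim t^{-1},\qquad
\bigl\|\tilde S_{\vartheta,\delta,x}(t)\bigr\|_{L^p(\Lambda_{\delta,x})\to L^p(\Lambda_{\delta,x})}\lesssim 1,
\]
the second being needed only to absorb the lower-order correction $\delta^2\tilde c_\vartheta\,\tilde S_{\vartheta,\delta,x}(t)$.

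Both estimates concern the Dirichlet semigroup of the \emph{same} symmetric divergence-form operator on varying domains. Via the linear change of variables $y\mapsto a_\vartheta^{1/2}y$ they reduce to the analogous statements for the Dirichlet heat semigroup $e^{t\Delta_\Omega}$ on the image domain $\Omega$ (with $\Omega=\R^d$ when $\delta=0$). The $L^p$-contractivity of $e^{t\Delta_\Omega}$ follows immediately from the Feynman--Kac representation used in Lemma~\ref{FeynmanKac}(i), with a constant independent of $\Omega$. For the analytic semigroup bound $\|\Delta_\Omega e^{t\Delta_\Omega}\|_{L^p\to L^p}\lesssim t^{-1}$ (with the convention that for $p=\infty$ one works on the subspace of continuous functions vanishing at the boundary), I would combine the pointwise Gaussian bound $p_t^\Omega(\cdot,\cdot)\le p_t(\cdot,\cdot)$ on the Dirichlet heat kernel (another consequence of Feynman--Kac and domain monotonicity) with the standard derivative estimate $|D^2 p_t|\lesssim t^{-1}p_{2t}$. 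This exhibits the kernel of $\Delta_\Omega e^{t\Delta_\Omega}$ as dominated pointwise by $t^{-1}$ times a Gaussian convolution kernel of unit $L^1$-mass, and the $L^p\to L^p$ bound follows by Young's inequality with a constant depending only on $p$ and $d$.

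The main obstacle is precisely this uniformity across the family $\{\Lambda_{\delta,x}\}_{\delta,x}$: the abstract analytic semigroup estimate applied individually to each generator would deliver constants that a priori depend on the domain. The resolution is to base every inequality on Gaussian heat-kernel bounds, whose only inputs are the ellipticity constants of $a_\vartheta$ and the dimension $d$, both independent of $\delta$ and $x$. Once this uniform kernel estimate is secured, transfer through the gauge transformation and the passage to $A^*_{\vartheta,\delta,x}$ and $S^*_{\vartheta,\delta,x}$ are routine, and yield the stated bound with implied constant independent of $x\in\mathcal J$ and $\delta\le 1$.
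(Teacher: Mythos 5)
Your reduction to the principal divergence-form Dirichlet semigroup via the gauge transformation and the $L^p$-contractivity via Feynman--Kac are both fine, but the key step — the domain-uniform bound $\norm{\tilde A_{\vartheta,\delta,x}\tilde S_{\vartheta,\delta,x}(t)}_{L^p\to L^p}\lesssim t^{-1}$ — is not justified by the argument you give. Domain monotonicity yields the pointwise comparison $p_t^{\Omega}\leq p_t$ for the kernels themselves, but pointwise domination of functions says nothing about their derivatives: the kernel of $\Delta_\Omega e^{t\Delta_\Omega}$ is $\Delta_y p_t^{\Omega}(x,y)$, not a derivative of the free kernel, and near $\partial\Omega$ the function $p_t^{\Omega}$ vanishes while $p_t$ does not, so $|D^2p_t^{\Omega}|$ cannot be read off from $p_t^{\Omega}\leq p_t$ together with $|D^2p_t|\lesssim t^{-1}p_{2t}$. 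Gaussian upper bounds for $\partial_tp_t^{\Omega}=\Delta p_t^{\Omega}$ do hold for sufficiently regular domains, but their constants depend on the boundary geometry, which is precisely the uniformity you identified as the main obstacle; your proposed resolution therefore assumes what it needs to prove. The only reason the bound is uniform over $\{\Lambda_{\delta,x}\}$ is that these domains are dilations and translations of the single fixed domain $\Lambda$ — and once you use that fact you no longer need heat kernels at all.

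That observation is the paper's entire proof: by Lemma \ref{rescaledsemigroup}, $(A^*_{\theta,\delta,x}S^*_{\theta,\delta,x}(t)z)_{\delta,x}=\delta^{2}A^*_{\theta}S^*_{\theta}(t\delta^{2})z_{\delta,x}$, and the $L^p$-norm scaling $\norm{w}_{L^p(\Lambda_{\delta,x})}=\delta^{d(1/2-1/p)}\norm{w_{\delta,x}}_{L^p(\Lambda)}$ converts the claim into the standard analytic-semigroup estimate $\norm{A^*_{\theta}S^*_{\theta}(s)z_{\delta,x}}_{L^p(\Lambda)}\lesssim s^{-1}\norm{z_{\delta,x}}_{L^p(\Lambda)}$ at $s=t\delta^{2}$ for the \emph{single fixed} operator $A^*_{\theta}$ on $L^p(\Lambda)$ (and on $\{u\in C(\Lambda):u=0\text{ on }\partial\Lambda\}$ for $p=\infty$), whose constant trivially does not depend on $\delta$ or $x$; all powers of $\delta$ cancel exactly. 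If you want to keep your route, replace the heat-kernel step by exactly this rescaling argument applied to $\tilde S_{\vartheta,\delta,x}$ — at which point the gauge transformation becomes unnecessary as well.
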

	\begin{proof}
	    Apply first the scaling in Lemma \ref{rescaledsemigroup} in reverse order such that with $1<p\leq \infty$
	    \begin{align*}
	        \norm{A^*_{\theta,\delta,x}S^*_{\theta,\delta,x}(t)z}_{L^p(\Lambda_{\delta,x})} = \delta^{d(1/2-1/p)+2} \Norml{A^*_{\theta}S^*_{\theta}(t\delta^{2})z_{\delta,x}}\Normr_{L^{p}(\Lambda)}.
	    \end{align*}
	    If $p<\infty$, by the semigroup property for analytic semigroups in \cite[Theorem V.2.1.3]{amann1995linear}, the $L^p(\Lambda)$-norm is up to a constant upper bounded by $(t\delta^{2})^{-1}\norm{z_{\delta,x}}_{L^p(\Lambda)}$, and the claim follows. The same proof applies to $p=\infty$, noting that $A^*_{\theta}$ generates an analytic semigroup on $\{u\in C(\Lambda),u=0$ on $\partial\Lambda\}$, cf. \cite[Theorem 7.3.7]{pazy_semigroups_1983}.
	\end{proof}
	
	The proof for the next result relies on the Bessel-potential spaces $H_0^{s,p}(\Lambda_{\delta,x})$, $1<p<\infty$, $s\in\R$, defined for $\delta>0$ as the domains of the fractional Dirichlet-Laplacian $(-\Delta_{\delta,x})^{s/2}$ on $\Lambda_{\delta,x}$ with norms $\norm{\cdot}_{H^{s,p}(\Lambda_{\delta,x})}=\norm{(-\Delta_{\delta,x})^{s/2}\cdot}_{L^p(\Lambda_{\delta,x})}$, see \cite{debussche_regularity_2015} for details and also Section \ref{sec:RKHS:computations} below. Since $a_{\theta}$ is positive definite, the norms $\norm{\cdot}_{H^{s,p}(\Lambda_{\delta,x})}$ are equivalently generated by the fractional powers of $-\tilde{A}_{\theta,\delta,x}$ \cite[Theorem 16.15]{yagi_abstract_2009}.
	
	\begin{lemma}
		\label{boundS*u}
		Let $0< \delta\leq 1$, $t>0$, $1<p\leq 2$ and grant Assumption \ref{assump:mainAssump}(iii). Let $z\in H_0^{s}(\mathbb{R}^d)$, $s\geq 0$, be compactly supported in $\bigcap_{x\in \mathcal{J}}\Lambda_{\delta,x}$, suppose that $V_{\delta,x}:L^p(\Lambda_{\delta,x})\rightarrow H_0^{-s,p}(\Lambda_{\delta,x})$ are bounded linear operators with $\norm{V_{\delta,x}z}_{H^{-s,p}(\Lambda_{\delta,x})}\leq V_{\operatorname{op}}\norm{z}_{L^p(\Lambda_{\delta,x})}$ for some $V_{\operatorname{op}}$ independent of $\delta$, $x$. Then for $1< p\leq2$ and  $\gamma=(1/p-1/2)d/2$ there exists a constant $C>0$, depending on $p$ and $s$ such that 
		\begin{align*}
		\sup_{x\in \mathcal{J}}\Norml S^*_{\vartheta,\delta,x}(t) V_{\delta,x}z \Normr_{L^2(\Lambda_{\delta,x})}
			& \leq Ce^{\tilde{c}_{\theta}t\delta^2}\sup_{x\in \mathcal{J}}\left(\norm{V_{\delta,x}z}_{L^2(\Lambda_{\delta,x})}\wedge (V_{\operatorname{op}}t^{-s/2-\gamma}\norm{z}_{L^{p}(\Lambda_{\delta,x})})\right).
		\end{align*}
		If $s=0$, then this holds also for $p=1$.
		\begin{proof}
		   Set $u=V_{\delta,x}z$, $v=U_{\theta,\delta,x}u$. The $U_{\theta,\delta,x}$ are bounded operators on $L^2(\Lambda_{\delta,x})$ uniformly in $\delta\geq 0$ and $x\in\mathcal{J}$ and thus by \eqref{eq:diagonalizable} 
		    \begin{align}
		        \Norml S^*_{\vartheta,\delta,x}(t) u \Normr_{L^2(\Lambda_{\delta,x})}
		            & \lesssim e^{\tilde{c}_{\theta}t\delta^2}\Norml \tilde{S}_{\vartheta,\delta,x}(t) v \Normr_{L^2(\Lambda_{\delta,x})}.\label{eq:bound_1}
		    \end{align}
		    Let first $s=0$ such that $H_0^{-s,p}(\R^d)=L^p(\R^d)$. Ellipticity and symmetry of $a_{\theta}$ show $\norm{e^{t\nabla\cdot a_{\theta}\nabla}|v|}_{L^2(\R^d)}\leq \norm{e^{tC'\Delta_0}|v|}_{L^2(\R^d)}$ for a constant $C'>0$ (use either \cite{sheu_estimates_1991} or argue that the semigroup $e^{t\nabla\cdot a_{\theta}\nabla}$ on acts on $L^2(\R^d)$ as a multiplication operator in the Fourier domain according to \cite[Section VI.5]{EngNag00}, which can be upper bounded by the identity operator). Approximating $u$ by continuous and compactly supported functions, we thus find from Lemma \ref{FeynmanKac}(i) and hypercontractivity of the heat kernel on $\mathbb{R}^d$ uniformly in $x\in\mathcal{J}$
			\begin{align*}
				\Norml S^*_{\vartheta,\delta,x}(t) u \Normr_{L^2(\Lambda_{\delta,x})}
				& \lesssim e^{\tilde{c}_{\theta}t\delta^2} \Norml e^{C't\Delta_0}|v| \Normr_{L^2(\R^d)} \lesssim  e^{\tilde{c}_{\theta}t\delta^2}t^{-\gamma}\norm{u}_{L^p(\mathbb{R}^d)}\lesssim e^{\tilde{c}_{\theta}t\delta^2}t^{-\gamma}\norm{z}_{L^p(\R^d)}.
			\end{align*}
			This yields the result for $s=0$. These inequalities hold also for $p=1$, thus proving the supplement of the statement. For $s>0$ and $p>1$ note first that by \cite[Proposition 17(i)]{altmeyer_parameterSemi_2020} we have $\norm{(-t\tilde{A}_{\theta,\delta,x})^{s/2}\tilde{S}_{\theta,\delta,x}(t)z}_{L^2(\Lambda_{\delta,x})}\lesssim \norm{z}_{L^2(\Lambda_{\delta,x})}$. Inserting this and then the last display with $u$ replaced by $(-\tilde{A}_{\theta,\delta,x})^{-s/2} v$ into  \eqref{eq:bound_1} we get
			\begin{align*}
				& \Norml S^*_{\vartheta,\delta,x}(t) u \Normr_{L^2(\Lambda_{\delta,x})} 
				 \lesssim e^{\tilde{c}_{\theta}t\delta^2}\Norml{(-\tilde{A}_{\theta,\delta,x})^{s/2}\tilde{S}_{\theta,\delta,x}(t)(-\tilde{A}_{\theta,\delta,x})^{-s/2}v}\Normr_{L^2(\Lambda_{\delta,x})}\\
				& \quad \lesssim e^{\tilde{c}_{\theta}t\delta^2}t^{-s/2}\Norml{\tilde{S}_{\theta,\delta,x}(t/2)(-\tilde{A}_{\theta,\delta,x})^{-s/2}v}\Normr_{L^2(\Lambda_{\delta,x})}\lesssim e^{\tilde{c}_{\theta}t\delta^2}t^{-s/2-\gamma} \Norml{v}\Normr_{H^{-s,p}(\Lambda_{\delta,x})},
			\end{align*}
			uniformly in $x\in\mathcal{J}$. Note that the  $U_{\theta,\delta,x}$ also induce a family of multiplication operators on $H^{s,p}_0(\R^d)$ for $s\geq 0$ with operator norms uniformly bounded in $x\in\mathcal{J}$, cf. \cite[Theorem 2.8.2]{Triebel1983book}. By duality and restriction this transfers to $H^{s,p}_0(\Lambda_{\delta,x})$ for general $s$ according to \cite[Theorem 3.3.2]{Triebel1983book}. Hence,
			\begin{align*}
			    \Norml S^*_{\vartheta,\delta,x}(t) u \Normr_{L^2(\Lambda_{\delta,x})} \lesssim  e^{\tilde{c}_{\theta}t\delta^2}t^{-s/2-\gamma}\Norml{u}\Normr_{H^{-s,p}(\Lambda_{\delta,x})}\lesssim e^{\tilde{c}_{\theta}t\delta^2}t^{-s/2-\gamma} V_{\operatorname{op}}\norm{z}_{L^{p}(\Lambda_{\delta,x})}.\hspace*{9mm}\qedhere
			\end{align*}
		\end{proof}
	\end{lemma}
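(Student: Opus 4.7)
The plan is to reduce the bound to the self-adjoint symmetric semigroup $\tilde{S}_{\theta,\delta,x}$ generated by $\tilde{A}_{\theta,\delta,x}=\nabla\cdot a_\theta \nabla$ via the diagonalisation \eqref{eq:diagonalizable}, written as $S^*_{\theta,\delta,x}(t)=e^{\tilde{c}_\theta t\delta^2}U_{\theta,\delta,x}^{-1}\tilde{S}_{\theta,\delta,x}(t)U_{\theta,\delta,x}$. Since $\mathcal{J}$ is compact and $\delta\leq 1$, the conjugating multipliers $U_{\theta,\delta,x}^{\pm 1}$ are bounded on $\mathbb{R}^d$ together with all their derivatives, uniformly in $x\in\mathcal{J}$, so they contribute only a universal constant on every function space appearing in the argument. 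The first half of the minimum then follows at once from $L^2$-contractivity of $\tilde{S}_{\theta,\delta,x}$ (generated by a negative self-adjoint operator), combined with the prefactor $e^{\tilde{c}_\theta t\delta^2}$.

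For the smoothing bound at $s=0$, the plan is to dominate $|\tilde{S}_{\theta,\delta,x}(t)u|$ pointwise on $\mathbb{R}^d$ by the free heat semigroup $\bar{S}_\theta(t)|u|$ via the Feynman--Kac representation of Lemma \ref{FeynmanKac}(i), and then apply the classical $L^p\to L^2$ hypercontractivity estimate for the Euclidean Gaussian kernel, which yields precisely a factor $t^{-\gamma}$ with $\gamma=(1/p-1/2)d/2$ and remains valid at $p=1$. Taking $u=V_{\delta,x}z$ and using the hypothesis $\|V_{\delta,x}z\|_{L^p}\leq V_{\operatorname{op}}\|z\|_{L^p}$ finishes the $s=0$ case. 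To lift to $s>0$ (where $p>1$), I would factor out the negative fractional power using the Bessel-potential identification $\|u\|_{H^{-s,p}(\Lambda_{\delta,x})}=\|(-\tilde{A}_{\theta,\delta,x})^{-s/2}u\|_{L^p(\Lambda_{\delta,x})}$ and split
\begin{equation*}
\tilde{S}_{\theta,\delta,x}(t)=\bigl[(-t\tilde{A}_{\theta,\delta,x})^{s/2}\tilde{S}_{\theta,\delta,x}(t/2)\bigr]\cdot t^{-s/2}\tilde{S}_{\theta,\delta,x}(t/2).
\end{equation*}
The bracketed factor is a bounded operator on $L^2$ uniformly in $t,\delta,x$ by the analytic-semigroup smoothing estimate of \cite[Proposition 17]{altmeyer_parameterSemi_2020}; the remaining $\tilde{S}_{\theta,\delta,x}(t/2)$ is then handled by the $s=0$ case applied to $(-\tilde{A}_{\theta,\delta,x})^{-s/2}U_{\theta,\delta,x}V_{\delta,x}z$, whose $L^p$-norm equals $\|U_{\theta,\delta,x}V_{\delta,x}z\|_{H^{-s,p}(\Lambda_{\delta,x})}$ and is controlled by the hypothesis on $V_{\delta,x}$.

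The principal obstacle I anticipate is guaranteeing that all constants are uniform in $\delta\in(0,1]$ and $x\in\mathcal{J}$. This has two components: the analytic-semigroup smoothing estimate for the rescaled generator $\tilde{A}_{\theta,\delta,x}$ on the dilated domain $\Lambda_{\delta,x}$, and the claim that $U_{\theta,\delta,x}$ acts as a bounded pointwise multiplier on the Bessel space $H^{-s,p}_0(\Lambda_{\delta,x})$ with norm independent of $\delta$ and $x$. The former reduces by rescaling to a fixed elliptic problem. The latter I would establish first on $\mathbb{R}^d$ for $s\geq 0$ via the standard multiplier theorem \cite[Theorem 2.8.2]{Triebel1983book}, noting that the derivatives of $x\mapsto \exp(-(a_\theta^{-1}b_\theta)\cdot(\delta x+y)/2)$ are bounded by $\delta^{|\alpha|}$ times fixed constants and hence controlled uniformly for $\delta\leq 1$, and then transfer to $\Lambda_{\delta,x}$ by restriction and to negative $s$ by duality using \cite[Theorem 3.3.2]{Triebel1983book}.
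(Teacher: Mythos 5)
Your proposal follows essentially the same route as the paper's proof: the diagonalisation \eqref{eq:diagonalizable} to reduce to $\tilde{S}_{\theta,\delta,x}$, Feynman--Kac domination plus heat-kernel hypercontractivity for the $s=0$ case (valid also at $p=1$), the analytic smoothing estimate $\norm{(-t\tilde{A}_{\theta,\delta,x})^{s/2}\tilde{S}_{\theta,\delta,x}(t)\cdot}_{L^2}\lesssim_s\norm{\cdot}_{L^2}$ combined with the $s=0$ bound applied to $(-\tilde{A}_{\theta,\delta,x})^{-s/2}U_{\theta,\delta,x}u$ for $s>0$, and the Triebel multiplier/duality argument for uniformity of the $U_{\theta,\delta,x}$ on $H^{-s,p}_0(\Lambda_{\delta,x})$. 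The only blemish is that your displayed operator splitting is not literally an identity as written (its right-hand side equals $(-\tilde{A}_{\theta,\delta,x})^{s/2}\tilde{S}_{\theta,\delta,x}(t)$ rather than $\tilde{S}_{\theta,\delta,x}(t)$), but your surrounding text makes clear it is meant to be applied after inserting $(-\tilde{A}_{\theta,\delta,x})^{s/2}(-\tilde{A}_{\theta,\delta,x})^{-s/2}$, exactly as in the paper.
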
	
 
	\subsubsection{Covariance structure of multiple local measurements}

	\begin{lemma}\label{lem:covFun}
	\begin{enumerate}[label=(\roman*)]
	    \item If $X_0=0$, then the Gaussian process from \eqref{eq:X_GP} has mean zero and covariance function
	    \begin{align*}
		 &\operatorname{Cov}(\sc{X(t)}{z},\sc{X(t')}{z'}) = \int_0^{t\wedge t'} \sc{ S^*_{\theta}(t-s)z}{ S^*_{\theta}(t'-s)z'}ds.
	    \end{align*}
	    \item If $X_0$ is the stationary initial condition from Lemma \ref{lem:X0}, then the Gaussian process from \eqref{eq:X_GP} has mean zero and covariance function
	    \begin{align*}
		 &\operatorname{Cov}(\sc{X(t)}{z},\sc{X(t')}{z'}) = \int_0^{\infty} \sc{ S^*_{\theta}((t-t')+s)z}{ S^*_{\theta}(s)z'}ds,\quad t\geq t'.
	    \end{align*}
	\end{enumerate}
	\end{lemma}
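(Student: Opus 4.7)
The plan is to read off both parts directly from the representation
\begin{equation*}
\sc{X(t) - S_\theta(t) X_0}{z} = \int_0^t \sc{S^*_\theta(t-s)z}{dW(s)},
\end{equation*}
already established in the paragraph following \eqref{eq:weakSolution}, and then apply the bilinear form of the It\^o isometry for cylindrical Wiener integrals,
\begin{equation*}
\mathbb{E}\Bigl[\int_0^t \sc{f(s)}{dW(s)} \int_0^{t'} \sc{g(s)}{dW(s)}\Bigr] = \int_0^{t\wedge t'}\sc{f(s)}{g(s)}\,ds,\quad f,g\in L^2([0,T];L^2(\Lambda)),
\end{equation*}
which follows from expanding $W=\sum_k e_k\beta_k$ along an orthonormal basis and using orthogonality of the $\beta_k$.

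For part (i), with $X_0=0$ we have $\sc{X(t)}{z}=\int_0^t\sc{S^*_\theta(t-s)z}{dW(s)}$, so the mean is zero by the martingale property of the stochastic integral. Choosing $f(s)=S^*_\theta(t-s)z\,\mathbf{1}_{[0,t]}(s)$ and $g(s)=S^*_\theta(t'-s)z'\,\mathbf{1}_{[0,t']}(s)$ in the isometry above yields the claimed integral from $0$ to $t\wedge t'$. One should check that $s\mapsto S^*_\theta(t-s)z$ is square integrable on $[0,t]$, which follows from uniform boundedness of the analytic semigroup on $L^2(\Lambda)$ generated by $A_\theta^*$.

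For part (ii), substitute the stationary initial condition $X_0=\int_{-\infty}^0 S_\theta(-r)\,dW(r)$ into the decomposition. By the semigroup property and linearity we obtain $\sc{S_\theta(t) X_0}{z}=\int_{-\infty}^0\sc{S^*_\theta(t-r)z}{dW(r)}$, so that glueing the two It\^o integrals gives
\begin{equation*}
\sc{X(t)}{z}=\int_{-\infty}^t \sc{S^*_\theta(t-s)z}{dW(s)}.
\end{equation*}
Again the mean is zero, and the extended It\^o isometry yields the covariance as an integral over $(-\infty,t\wedge t']$. The change of variables $u=-s$ (which transports the lower limit to $+\infty$ and flips the orientation) and subsequent reparameterisation recasts this in the desired form $\int_0^\infty\sc{S^*_\theta(t+s)z}{S^*_\theta(t'+s)z'}ds$.

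The main point to treat carefully is that the stationary integral $\int_{-\infty}^0 S_\theta(-r)\,dW(r)$ is well defined as an $L^2(\Lambda)$-valued (or $\mathcal{H}_1$-valued) Gaussian random variable; this requires $\int_0^\infty\|S^*_\theta(r)z\|_{L^2(\Lambda)}^2\,dr<\infty$ for every $z$, which in turn is guaranteed by the spectral gap of $A_\theta$ under the standing ellipticity and Dirichlet boundary assumptions, and is exactly the condition underlying Lemma \ref{lem:X0}. Apart from this integrability issue, the rest of the argument is purely algebraic manipulation of the It\^o isometry, and there are no genuine obstacles.
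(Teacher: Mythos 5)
Your route---representing $\sc{X(t)}{z}$ as a (two-sided) Wiener integral of $s\mapsto S^*_\theta(t-s)z$ and applying the bilinear It\^o isometry---is exactly the paper's proof, which simply cites \eqref{eq:weakSolution}, It\^o's isometry and the stationary representation $\sc{X(t)}{z}=\int_{-\infty}^{t}\sc{S_{\theta}(t-s)z}{dW(s)}$. Part (i) and the integrability remarks are fine.

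The final step of your part (ii), however, does not go through as written. The isometry gives
\[
\operatorname{Cov}(\sc{X(t)}{z},\sc{X(t')}{z'})=\int_{-\infty}^{t\wedge t'}\sc{S^*_\theta(t-s)z}{S^*_\theta(t'-s)z'}\,ds,
\]
and the substitution $u=-s$ turns this into $\int_{-(t\wedge t')}^{\infty}\sc{S^*_\theta(t+u)z}{S^*_\theta(t'+u)z'}\,du$, not $\int_{0}^{\infty}$. The leftover piece $\int_{-(t\wedge t')}^{0}$ is not zero in general: for a scalar stationary Ornstein--Uhlenbeck process the covariance is $e^{-\lambda|t-t'|}/(2\lambda)$, whereas $\int_0^\infty e^{-\lambda(t+s)}e^{-\lambda(t'+s)}\,ds=e^{-\lambda(t+t')}/(2\lambda)$; note also that the displayed target is not a function of $t-t'$ alone, so it cannot be the covariance of a stationary process for all $t,t'$. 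The two expressions coincide precisely when $t\wedge t'=0$, which is the only case in which the paper actually invokes part (ii) (the kernel $c_{\theta,\delta,k,l}(t)=\E_{\theta}[X_{\delta,k}(t)X_{\delta,l}(0)]$ in \eqref{eq:cov_kernel}, and $\E[\sc{X_0}{\cdot}^2]$ in the proof of Lemma \ref{lem:X0}). So you should either prove the identity for $t'=0$ only---where it follows immediately from your substitution---or record the general answer as $\int_0^\infty\sc{S^*_\theta(t-t\wedge t'+s)z}{S^*_\theta(t'-t\wedge t'+s)z'}\,ds$. Asserting that the change of variables ``recasts this in the desired form'' skips over a step that is false for general $t,t'>0$.
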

	\begin{proof}
	    Part (i) follows from \eqref{eq:X_GP} and Itô’s isometry \cite[Proposition 4.28]{da_prato_stochastic_2014}. For part (ii) we conclude in the same way from noting that the stationary solution given by $\sc{X(t)}{z}=\int_{-\infty}^{t}\sc{S^*_{\theta}(t-s)z}{dW(s)}$ has mean zero.
	\end{proof}

    Introduce for $i,j=1,\dots,p$
		\begin{align*}
		   \Psi_{\theta}(A_i^*K,A_j^*K) &= \frac{1}{2}\sc{(-\nabla\cdot a_{\theta}\nabla)^{-1/2}A_i^*K}{(-\nabla\cdot a_{\theta}\nabla)^{-1/2}A_j^*K}_{L^2(\R^d)},
		\end{align*}
    which is well-defined under Assumption \ref{assump:mainAssump}. by the discussion before Theorem \ref{thm:clt}.

	\begin{lemma}
		\label{ConvFisher} Grant Assumption \ref{assump:mainAssump} and let $X_0=0$. 
		We have as $\delta\rightarrow 0$ 
		    \begin{align*}
		       \delta^{-2+n_i+n_j}(MT)^{-1}\sum_{k=1}^M\int_{0}^{T}\E\left[\sc{X(t)}{A_i^*K_{\delta,x_k}}\sc{X(t)}{A_j^*K_{\delta,x_k}}\right]dt \rightarrow \Psi_{\theta}(A_i^*K,A_j^*K).
		    \end{align*}
		\begin{proof}
		   Fix $i,j$ with $n_i+n_j>2-d$. Then, applying Lemma \ref{lem:covFun}(i), the scaling from Lemma \ref{rescaledsemigroup} and changing variables give \begin{align*}
		       \delta^{-2+n_i+n_j}(MT)^{-1}\sum_{k=1}^M\int_{0}^{T}\E\left[\sc{X(t)}{A_i^*K_{\delta,x_k}}\sc{X(t)}{A_j^*K_{\delta,x_k}}\right]dt = 
			    \int_0^{\infty}f_{\delta}(t')dt'
		   \end{align*}
		   with
		    \begin{align*}
			    f_{\delta}(t')=(MT)^{-1}\sum_{k=1}^M\sc{S^*_{\theta,\delta,x_k}(t')A^*_{i}K}{S^*_{\theta,\delta,x_k}(t')A^*_{j}K}_{L^2(\Lambda_{\delta,x_k})}\int_0^T\mathbf{1}_{\{0\leq t'\leq t\delta^{-2}\}}dt.
			\end{align*}
			Consider now the differential operators $V_{\delta,x_k}=A^*_{i}$. If $D^m$ is a composition of $m$ partial differential operators, then Theorem 1.43 of \cite{yagi_abstract_2009} yields that $D^m$ is a bounded linear operator from $L^{p}(\Lambda)$ to $H_0^{-m,p}(\Lambda)$, implying $\Norml{D^m K_{\delta,x_k}}\Normr_{H^{-m,p}(\Lambda)}\lesssim \delta^{-m}\Norml{K_{\delta,x_k}}\Normr_{L^{p}(\Lambda)}$. Since $(D^m K)_{\delta,x_k}=\delta^m D^m K_{\delta,x_k}$, changing variables gives $\Norml{D^m K}\Normr_{H^{-m,p}(\Lambda_{\delta,x_k})}\lesssim \Norml{K}\Normr_{L^{p}(\Lambda_{\delta,x_k})}$. From this we find  $\norm{V_{\delta,x_k}K}_{H^{-n_i,p}(\Lambda_{\delta,x})}\leq \norm{K}_{L^p(\Lambda_{\delta,x_k})}$, $\norm{V_{\delta,x_k}K}_{L^2(\Lambda_{\delta,x_k})}\lesssim \norm{K}_{H^{n_i}(\R^d)}$, and Lemma \ref{boundS*u}  shows for $0\leq t'\leq T\delta^{-2}$, $\varepsilon>0$ and all sufficiently small $\delta> 0$
			\begin{equation}
				\sup_{x\in\mathcal{J}}\norm{S^*_{\vartheta,\delta,x}(t') A^*_{i}K}_{L^2(\Lambda_{\delta,x})} \lesssim 1\wedge (t')^{-n_i/2-d/4+\varepsilon}.
				\label{eq:fisher_5}
			\end{equation}
            By the Cauchy-Schwarz inequality we get $|f_{\delta}(t')|\lesssim 1\wedge (t')^{-n_i/2-n_j/2-d/2+2\varepsilon}$. In particular, taking  $\epsilon$ so small that $n_i+n_j>2-d-4\epsilon$ yields $\sup_{\delta> 0}|f_{\delta}|\in L^1([0,\infty))$. Lemma \ref{FeynmanKac}(ii), Lemma \ref{rescaledsemigroup}(ii) and continuity of the $L^2$-scalar product show now pointwise for all $t'>0$ that $f_{\delta}(t')\rightarrow \sc{e^{2t'\nabla\cdot a_{\theta}\nabla}A^*_iK}{A^*_jK}_{L^2(\R^d)}$. Conclude by the dominated convergence theorem and $\int_0^\infty\sc{e^{2t'\nabla\cdot a_{\theta}\nabla}A_i^*K}{A_j^*K}_{L^2(\R^d)} dt'=\Psi_{\theta}(A_i^*K,A_j^*K)$.
		\end{proof}
	\end{lemma}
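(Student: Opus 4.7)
The plan is to reduce the left-hand side to an integral over $[0,\infty)$ of a scalar integrand $f_\delta(t')$ that one can treat by dominated convergence.

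\emph{Step 1: rewriting via the covariance structure.} Since $X_0=0$, Lemma~\ref{lem:covFun}(i) gives
\begin{align*}
\mathbb{E}\bigl[\sc{X(t)}{A_i^*K_{\delta,x_k}}\sc{X(t)}{A_j^*K_{\delta,x_k}}\bigr]
=\int_0^t \sc{S_\theta^*(u)A_i^*K_{\delta,x_k}}{S_\theta^*(u)A_j^*K_{\delta,x_k}}\,du,
\end{align*}
by Itô's isometry and the substitution $u=t-s$. Swapping the $t$- and $u$-integrals, the sum on the left-hand side becomes
\begin{align*}
\delta^{-2+n_i+n_j}(MT)^{-1}\sum_{k=1}^M \int_0^T(T-u)\sc{S_\theta^*(u)A_i^*K_{\delta,x_k}}{S_\theta^*(u)A_j^*K_{\delta,x_k}}\,du.
\end{align*}

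\emph{Step 2: rescaling.} I would now apply Lemma~\ref{rescaledsemigroup}(i),(iii) to transform $A_i^*K_{\delta,x_k}$ into $\delta^{-n_i}(A_{i,\delta,x_k}^*K)_{\delta,x_k}$ and to write $S_\theta^*(u)$ acting on $(\cdot)_{\delta,x_k}$ as $(S_{\theta,\delta,x_k}^*(u\delta^{-2})\cdot)_{\delta,x_k}$. The scaling is an $L^2$-isometry, so the prefactors combine to $\delta^{-2-n_i-n_j}\cdot\delta^{n_i+n_j}=\delta^{-2}$, and after substituting $t'=u\delta^{-2}$ the whole expression equals $\int_0^\infty f_\delta(t')\,dt'$ with
\begin{align*}
f_\delta(t')=(MT)^{-1}\sum_{k=1}^M \sc{S_{\theta,\delta,x_k}^*(t')A_{i,\delta,x_k}^*K}{S_{\theta,\delta,x_k}^*(t')A_{j,\delta,x_k}^*K}_{L^2(\Lambda_{\delta,x_k})}\int_0^T\mathbf{1}_{\{t'\leq t\delta^{-2}\}}\,dt.
\end{align*}

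\emph{Step 3: pointwise convergence.} For each fixed $t'>0$, the time-indicator integral tends to $T$ as $\delta\to 0$. Lemma~\ref{rescaledsemigroup}(ii) gives $A_{i,\delta,x_k}^*K\to\bar A_i^*K$ in $L^2(\mathbb{R}^d)$, uniformly in $k$, and Lemma~\ref{FeynmanKac}(ii) then gives $S_{\theta,\delta,x_k}^*(t')A_{i,\delta,x_k}^*K\to\bar S_\theta(t')\bar A_i^*K$ in $L^2(\mathbb{R}^d)$, again uniformly in $k$. Continuity of the inner product shows that $f_\delta(t')\to\sc{\bar S_\theta(t')\bar A_i^*K}{\bar S_\theta(t')\bar A_j^*K}_{L^2(\mathbb{R}^d)}$ pointwise.

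\emph{Step 4: dominated convergence — this is the main obstacle.} I need a $\delta$- and $k$-uniform dominating function that is integrable on $[0,\infty)$ exactly when $n_i+n_j>2-d$. For this I would invoke Lemma~\ref{boundS*u} with $V_{\delta,x_k}=A_{i,\delta,x_k}^*$, $s=n_i$ and some $p\in(1,2]$ close to $1$: this requires first verifying via a change of variables that the differential operator $D^m:L^p\to H_0^{-m,p}$ has $x_k$-uniform norm on $\Lambda_{\delta,x_k}$ (so $V_{\mathrm{op}}$ is bounded), which follows from standard interpolation and the fact that $D^m K_{\delta,x_k}=\delta^{-m}(D^mK)_{\delta,x_k}$. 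Lemma~\ref{boundS*u} then yields
\begin{align*}
\sup_{x\in\mathcal{J}}\|S_{\vartheta,\delta,x}^*(t')A_{i,\delta,x}^*K\|_{L^2(\Lambda_{\delta,x})}\lesssim 1\wedge (t')^{-n_i/2-d/4+\varepsilon},
\end{align*}
valid on $0\leq t'\leq T\delta^{-2}$ for any $\varepsilon>0$ and all small $\delta$ (the factor $e^{\tilde c_\theta t\delta^2}$ remains bounded on this range). Applying Cauchy–Schwarz to $f_\delta$ gives $|f_\delta(t')|\lesssim 1\wedge (t')^{-(n_i+n_j)/2-d/2+2\varepsilon}$. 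Choosing $\varepsilon$ so small that $n_i+n_j+d-4\varepsilon>2$ makes this dominating function integrable. Dominated convergence then both establishes well-definedness of $\Psi_\theta(\bar A_i^*K,\bar A_j^*K)$ and yields the claimed limit.
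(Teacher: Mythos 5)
Your proposal is correct and follows essentially the same route as the paper: Lemma \ref{lem:covFun}(i) plus the scaling of Lemma \ref{rescaledsemigroup} to obtain $\int_0^\infty f_\delta(t')\,dt'$, pointwise convergence via Lemmas \ref{FeynmanKac}(ii) and \ref{rescaledsemigroup}(ii), and the uniform majorant from Lemma \ref{boundS*u} with $V_{\delta,x_k}=A^*_{i,\delta,x_k}$ followed by Cauchy--Schwarz and dominated convergence. The only cosmetic difference is that you carry the weight $(T-u)$ explicitly after Fubini, whereas the paper keeps it as the indicator integral $\int_0^T\mathbf{1}_{\{0\leq t'\leq t\delta^{-2}\}}\,dt$; these are identical.
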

	
	\begin{lemma}
		\label{ConvouterVar}
		Grant Assumption \ref{assump:mainAssump} and let $X_0=0$. If $n_i+n_j> 2-d$ for $i,j=1,\dots,p$, then $\sup_{x\in\mathcal{J}}\operatorname{Var}(\int_0^T\sc{X(t)}{A^*_iK_{\delta,x}}\sc{X(t)}{A^*_jK_{\delta,x}}dt)=o(\delta^{4-2n_i-2n_j})$.
		\begin{proof}
			Applying the scaling from Lemma \ref{rescaledsemigroup} and using Wicks theorem \cite[Theorem 1.28]{janson_gaussian_1997} we have for $x\in\mathcal{J}$
			\begin{align*}
			  &\delta^{2n_i+2n_j}\operatorname{Var}\Big(\int_0^T\sc{X(t)}{A_i^*K_{\delta,x}}\sc{X(t)}{A_j^* K_{\delta,x}}dt\Big)\\
			    &\quad =\operatorname{Var}\Big(\int_0^T\sc{X(t)}{(A^*_{i}K)_{\delta,x}}\sc{X(t)}{(A^*_{j}K)_{\delta,x}}dt\Big) = V_1 + V_2
			\end{align*}
			with $V_1 = V_{\delta,x}(A^*_{i}K, A^*_{i}K, A^*_{j}K, A^*_{j}K)$, $V_2 = V_{\delta,x}(A^*_{i} K, A^*_{j} K, A^*_{j} K, A^*_{i} K)$, 
			and where for $v,v',z,z'\in L^2(\Lambda_{\delta,x})$ 
			\begin{align*}
			V_{\delta,x}(v,v',z,z') =	\int_0^T \int _0^{T} \E[\sc{X(t)}{v_{\delta,x}}\sc{X(t')}{v'_{\delta,x}}]\E[\sc{X(t)}{z_{\delta,x}}\sc{X(t')}{z'_{\delta,x}}]dt'dt.
			\end{align*}
			We only upper bound $V_1$, the arguments for $V_2$ are similar. Set $f_{i,j}(s,s')=\sc{S^*_{\theta,\delta,x}(s+s')A^*_{i}K}{S^*_{\theta,\delta,x}(s')A^*_{j}K}_{L^2(\Lambda_{\delta,x})}$. Using Lemma \ref{lem:covFun}(i) and the scaling in Lemma \ref{rescaledsemigroup} we have
			\begin{align*}
			    V_1 = 2\delta^6\int_0^{T}\int_0^{t\delta^{-2}}&\Big(\int_0^{t\delta^{-2}-s}f_{i,i}(s,s')ds'\Big)\Big(\int_0^{t\delta^{-2}-s}f_{j,j}(s,s'')ds''\Big)dsdt,
			\end{align*}
			cf. \cite[Proof of Proposition A.9]{altmeyer_nonparametric_2020}. 
			From \eqref{eq:fisher_5} and the Cauchy-Schwarz inequality we infer 
			\begin{align*}
			    \sup_{x\in\mathcal{J}}|f_{i,i}(s,s')f_{j,j}(s,s'')| \lesssim &(1\wedge s^{-(n_i+n_j)/2-d/2+2\varepsilon}) (1\wedge s'^{-n_i/2-d/4+\varepsilon})(1\wedge s''^{-n_j/2-d/4+\varepsilon})
			\end{align*}
			for $\varepsilon>0$, which gives
			\begin{align*}
			     \sup_{x\in\mathcal{J}} \left|V_1\right|
			     &\lesssim \delta^6 \int_0^{T\delta^{-2}} (1\wedge s^{-n_i/2-n_j/2-d/2+2\varepsilon}) ds \int_0^{T\delta^{-2}} (1\wedge s'^{-n_i/2-d/4+\varepsilon})ds'\\
			     &\quad\quad\cdot\int_0^{T\delta^{-2}} (1\wedge s''^{-n_j/2-d/4+\varepsilon})ds''\\ &\lesssim \delta^6(1\vee \delta^{n_i+n_j+d-2-4\varepsilon})(1\vee\delta^{n_i+d/2-2-2\varepsilon})(1\vee\delta^{n_j+d/2-2-2\varepsilon}).
			\end{align*}
			Without loss of generality let $n_i\leq n_j$. For $\varepsilon$ small enough, we can ensure $\delta^{n_i+n_j+d-2-4\varepsilon}\leq  1$, as $n_i+n_j>2-d$. In $d\leq 2$ only the pairs $(n_i,n_j)\in\{(0,0),(0,1)\}$ are excluded, and in every case the claimed bound holds. The same applies to $d\geq 3$ for all pairs $(n_i,n_j)$.
		\end{proof}
	\end{lemma}

    \subsubsection{Proof of Theorem \ref{thm:clt}}
    
    \begin{proof}
	   We begin with the observed Fisher information. Suppose first $X_0=0$. Under Assumption \ref{assump:mainAssump} we find that $n_i+n_j >2-d$ for all $i,j=1,\dots,p$ in all dimensions $d\geq 1$. It follows from Lemmas \ref{ConvFisher} and \ref{ConvouterVar} that
		\begin{align*}
		      (\rho_{\delta}\mathcal{I}_{\delta}\rho_{\delta})_{ij} &=\delta^{-2+n_i+n_j}M^{-1}\sum_{k=1}^M\int_{0}^{T}\sc{X(t)}{A_i^*K_{\delta,x_k}}\sc{X(t)}{A_j^*K_{\delta,x_k}}dt\\
		    &=T\Psi_{\theta}(A^*_iK,A^*_jK) + o_{\P}(1) = (\Sigma_{\theta})_{ij} + o_{\P}(1).
		\end{align*}
        This yields for $X_0=0$ the wanted convergence $\rho_{\delta} \mathcal{I}_{\delta}\rho_{\delta} \stackrel{\P}\rightarrow \Sigma_{\theta}$. In order to extend this to the general $X_0$ from Assumption \ref{assump:mainAssump}, let $\bar{X}$ be defined as $X$, but starting in $\bar{X}(0)=0$ such that for $v\in L^2(\Lambda)$, $\sc{X(t)}{v}=\sc{\bar{X}(t)}{v}+\sc{S_{\vartheta}(t)X_0}{v}$. If $\bar{\mathcal{I}}_{\delta}$ is the observed Fisher information corresponding to $\bar{X}$, then by the Cauchy-Schwarz inequality, with $v_{i}=\sup_{k}\delta^{2n_i-2}\int_{0}^{T}\sc{X_0}{S^*_{\theta}(t)A^*_i K_{\delta,x_k}}^2 dt$,
		\begin{align*}
			& |(\rho_{\delta}\mathcal{I}_{\delta}\rho_{\delta})_{ij}-(\rho_{\delta}\bar{\mathcal{I}}_{\delta}\rho_{\delta})_{ij}|
		\lesssim (\rho_{\delta}\bar{\mathcal{I}}_{\delta}\rho_{\delta})_{ii}^{1/2}
			v_{j}^{1/2}
			+ (\rho_{\delta}\bar{\mathcal{I}}_{\delta}\rho_{\delta})_{jj}^{1/2}
			v_{i}^{1/2}
			+ 
			v_{i}^{1/2}v_{j}^{1/2}.
		\end{align*}
		By the first part, $(\rho_{\delta}\bar{\mathcal{I}}_{\delta}\rho_{\delta})_{ii}$ is bounded in probability and Assumption \ref{assump:mainAssump}(iv) gives $v_{i}=o_{\P}(1)$ for all $i$. From this obtain again the convergence of the observed Fisher information. Regarding the invertibility of $\Sigma_{\theta}$, let  $\lambda\in\R^p$ such that
    	\begin{align*}
    	    0 = \sum_{i,j=1}^p \lambda_i\lambda_j(\Sigma_{\theta})_{ij} & = T \Psi_{\theta}\Big(\sum_{i=1}^p\lambda_i A^*_{i}K,\sum_{i=1}^p\lambda_iA^*_iK\Big).
    	\end{align*}
    	By the definition of $\Psi_{\theta}$ this implies $e^{t\nabla\cdot a_{\theta}\nabla}(\sum_{i=1}^p\lambda_i A^*_iK)=0$ for all $t\geq 0$ and thus $\sum_{i=1}^p\lambda_i A^*_iK=0$. Since the functions $A^*_iK$ are linearly independent by Assumption \ref{assump:mainAssump}(i), conclude that $\Sigma_{\theta}$ is invertible. 
	
	We proceed next to the proof of the CLT. The augmented MLE and the statement of the limit theorem remain unchanged when $K$ is multiplied by a scalar factor. We can therefore assume without loss of generality that $\norm{K}_{L^2(\R^d)}=1$. By the basic error decomposition \eqref{eq:basicError} and because $\Sigma_{\theta}$ is invertible, this means 
			\begin{align}
				(\rho_{\delta} \mathcal{I}_{\delta}\rho_{\delta})^{1/2}\rho_{\delta}^{-1} (\hat{\theta}_{\delta}-\theta) = (\rho_{\delta}\mathcal{I}_{\delta}\rho_{\delta})^{-1/2} \Sigma_{\theta}^{1/2}(\Sigma_{\theta}^{-1/2}\rho_{\delta}\mathcal{M}_{\delta}).\label{eq:CLT}
			\end{align}			
	Note that $\mathcal{M}_{\delta}=\mathcal{M}_{\delta}(T)$ corresponds to a $p$-dimensional continuous and square integrable martingale $(\mathcal{M}_{\delta}(t))_{0\leq t\leq T}$ with respect to the filtration $(\mathcal{F}_t)_{0\leq t\leq T}$ evaluated at $t=T$. In view of Assumption \ref{assump:mainAssump}(iii) let $\delta\leq\delta'$ such that for $s,t\geq0$ and $k,k'$ with the Kronecker delta $\delta_{k,k'}$
		\begin{equation*}
		    \E[W_k(s)W_{k'}(t)] =(s\wedge t)\sc{K_{\delta,x_k}}{K_{\delta,x_{k'}}}=(s\wedge t)\delta_{k,k'}.
		\end{equation*}
	This means that the Brownian motions $W_k$ and $W_{k'}$ are independent for $k\neq k'$ and thus their quadratic co-variation process at $t$ is $[W_k,W_{k'}]_t=t\delta_{k,k'}$. From this infer that the quadratic co-variation process of the martingale $(\mathcal{M}_{\delta}(t))_{0\leq t\leq T}$ at $t=T$ for $\delta\leq\delta'$ is equal to
	\begin{align*}
	    [\mathcal{M}_{\delta}]_T = \sum_{k,k'=1}^M \int_0^T X^A_{\delta,k}(t)X^A_{\delta,k'}(t)^\top d[W_k,W_{k'}]_t=\mathcal{I}_{\delta}.
	\end{align*}
	Theorem \ref{thm:MartingaleCLT} now implies $\Sigma_{\theta}^{-1/2}\rho_{\delta}\mathcal{M}_{\delta}\stackrel{d}\rightarrow \mathcal{N}(0,I_{p\times p})$. Conclude in \eqref{eq:CLT} by $\rho_{\delta} \mathcal{I}_{\delta}\rho_{\delta} \stackrel{\P}\rightarrow \Sigma_{\theta}$ and Slutsky's lemma.
	\end{proof}

	\subsection{RKHS computations}\label{sec:RKHS:computations}
    
    The proofs of the RKHS results from Section \ref{Sec:RKHS:SPDE} are achieved by basic operations on RKHS, in particular under linear transformation (see, e.g.,
    \cite[Chapter 4]{MR3024389} or \cite[Chapter 12]{MR3967104}).
    
    Recall the stationary process $X$ in \eqref{eq:stochConv} and that $Ae_j=-\lambda_je_j$ with eigenvalues $0<\lambda_1\leq \lambda_2\leq \cdots$ and an orthonormal basis $(e_j)_{j\geq 1}$ of $\mathcal{H}$. The cylindrical Wiener process can be realised as $W=\sum_{j\geq 1}e_j\beta_j$ for independent scalar Brownian motions $\beta_j$ and we obtain 
	\begin{align}
	    X(t)=\sum_{j\geq 1}\int_{-\infty}^t e^{-\lambda_j(t-t')}d\beta_j(t')e_j=\sum_{j\geq 1}Y_j(t)e_j,\label{eq:series_decomp}
	\end{align}
    with independent stationary Ornstein-Uhlenbeck processes $Y_j$ satisfying
    \begin{align*}
        dY_{j}(t) = -\lambda_jY_j(t)dt+d\beta_j(t).
    \end{align*}
    For a sequence $(\mu_j)$ of non-decreasing, positive real numbers, take $\mathcal{H}_1$ to be the closure of $\mathcal{H}$ under the norm 
    \begin{align*}
        \norm{z}^2_{\mathcal{H}_1} = \sum_{j\geq 1}\frac{1}{\mu_j^2}\sc{z}{e_j}^2_{\mathcal{H}},
    \end{align*}
    such that $\mathcal{H}$ is continuously embedded in $\mathcal{H}_1$. If for $0\leq t\leq T$
    \begin{align}\label{eq:existence:SPDE}
        \int_{-\infty}^t\norm{S(t')}^2_{\operatorname{HS}(\mathcal{H},\mathcal{H}_1)}dt' 
        & = \sum_{j\geq 1}\int_{-\infty}^t \norm{S(t')e_j}^2_{\mathcal{H}_1}dt'= \sum_{j\geq 1}\frac{1}{\mu_j^2}\int_{-\infty}^t e^{-2\lambda_jt'}dt'<\infty,
    \end{align}
    then we conclude by \cite[Theorem 5.2]{da_prato_stochastic_2014} that the law of $X$ induces a Gaussian measure on the Hilbert space $L^2([0,T];\mathcal{H}_1)$. A first universal choice is given by $\mu_j=j$ for all $j\geq 1$. Moreover, if $A$ is a second order elliptic differential operator, then Weyl’s law \cite[Lemma 2.3]{shimakura_partial_1992} says that the $\lambda_j$ are positive real numbers of the order $j^{2/d}$, meaning that the choice $\mu_j=\lambda_j^{s/2}$ is possible whenever $s\geq 0$ and $s+1>d/2$. In this case, $\mathcal{H}_1$ corresponds to a Sobolev space of negative order $-s$ induced by the eigensequence $(\lambda_j,e_j)_{j\geq 1}$.

    Let us introduce some background on the RKHS of a centred Gaussian random variable $Z$, defined on a separable Hilbert space $\mathcal{Z}$. Its covariance operator $C_Z$ is necessarily positive self-adjoint and trace-class. This means, by the spectral theorem, there exist strictly positive eigenvalues $(\sigma_j^2)_{j\geq 1}$ and an associated orthonormal system of eigenvectors $(u_j)_{j\geq 1}$ such that $C_Z=\sum_{j\geq 1}\sigma_j^2(u_j\otimes u_j)$. Associate with $Z$ (or rather with the induced centred Gaussian measure) the so-called kernel or RKHS $(H_Z,\|\cdot\|_{Z})$, where 
	\begin{align}\label{eq:RKHS:Hilbert:space}
	    H_Z=\{h\in \mathcal{Z}:\|h\|_{Z}<\infty\},\qquad \|h\|_{Z}^2=\sum_{j\geq 1}\frac{\sc{u_j}{h}_{\mathcal{Z}}^2}{\sigma_j^2}
	\end{align}
	(see, e.g., \cite[Example 4.2]{MR3024389} and also \cite[Chapters 4.1 and 4.3]{MR3024389} and \cite[Chapter 3.6]{gine_mathematical_2016} for other characterizations of the RKHS of a Gaussian measure or process).
	Alternatively, we have $H_Z=C_Z^{1/2}\mathcal{Z}$ and $\|h\|_{Z}=\norm{C_Z^{-1/2}h}_{\mathcal{Z}}$ for $h\in H_Z$. A useful tool to compute the RKHS is the fact that the RKHS behaves well under linear transformation. More precisely, if $L:\mathcal{Z}\rightarrow \mathcal{Z}'$ is a bounded linear operator between Hilbert spaces, then the image $L(Z)$ is a centred Gaussian random variable with RKHS $L(H_Z)$ and norm $\norm{h}_{ L(Z)}=\inf\{\norm{f}_{Z}:f\in L^{-1}h\}$ (see Proposition 4.1 in \cite{MR3024389} and also Chapter 3.6 in \cite{gine_mathematical_2016}).

    \subsubsection{RKHS of an Ornstein-Uhlenbeck process}
    We start by computing the RKHS $(H_{Y_j},\norm{\cdot}_{Y_{j}})$ of the processes $Y_j$. We show that the RKHS is equal to the set $H$ from \mbox{Theorem \ref{thm:upper:bound:RKHS:norm:M>1}}, and therefore independent of $j$, while the corresponding norm depends on $\lambda_j$.
    
    \begin{lemma}\label{lemma:RKHS_norm_OU_process}
       For every $j\geq 1$ we have $H_{Y_j}=H$ and
        \begin{align}\label{eq:RKHS_norm_OU_process}
             \norm{h}^2_{Y_j} 
                &= \lambda_j^2 \norm{h}^2_{L^2([0,T])} + \lambda_j(h^2(T)+h^2(0)) + \norm{h'}^2_{L^2([0,T])}.
        \end{align}
    \end{lemma}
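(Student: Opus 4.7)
The plan is to verify directly that $H$ equipped with the bilinear form
\[
\langle f,g\rangle_{j} := \lambda_j^2\int_0^T fg\,ds + \lambda_j\bigl(f(0)g(0)+f(T)g(T)\bigr) + \int_0^T f'g'\,ds
\]
is the reproducing kernel Hilbert space of the Gaussian process $Y_j$ solving \eqref{eq:OU:SDE} in the stationary regime. Because $Y_j$ is a stationary Ornstein--Uhlenbeck process, its mean vanishes and its covariance function is the standard $c_j(s,t) = \tfrac{1}{2\lambda_j}\exp(-\lambda_j|t-s|)$, so the claim reduces to showing that $(H,\langle\cdot,\cdot\rangle_{j})$ is a Hilbert space with reproducing kernel $c_j$; uniqueness of the RKHS of a Gaussian measure then yields $H_{Y_j}=H$ with the norm \eqref{eq:RKHS_norm_OU_process}.

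As a preliminary, $H$ equals the one-dimensional Sobolev space $H^1(0,T)$ as a set, and the trace embedding $H^1(0,T)\hookrightarrow C([0,T])$ bounds $f\mapsto f(0),f(T)$ in the standard $H^1$-norm. Thus $\langle\cdot,\cdot\rangle_{j}$ is equivalent to the standard $H^1$ inner product, so $(H,\langle\cdot,\cdot\rangle_{j})$ is complete and point evaluations $f\mapsto f(t)$ are continuous linear functionals.

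The core step is the reproducing property $\langle f,k_t\rangle_{j}=f(t)$ for every $t\in[0,T]$, where $k_t(s):=c_j(s,t)=\tfrac{1}{2\lambda_j}e^{-\lambda_j|t-s|}$. One checks directly that $k_t\in H$, that $k_t''(s)=\lambda_j^2 k_t(s)$ for $s\neq t$, and that $k_t'$ jumps by $-1$ at $s=t$; equivalently, $-k_t''+\lambda_j^2 k_t = \delta_t$ in the distributional sense on $(0,T)$. Splitting $\int_0^T f'k_t'\,ds$ over $[0,t]$ and $[t,T]$ and integrating by parts in each piece gives
\[
\int_0^T f'(s) k_t'(s)\,ds = f(t) - \lambda_j^2\int_0^T f k_t\,ds + f(T)k_t'(T) - f(0)k_t'(0).
\]
Inserting this into $\langle f,k_t\rangle_{j}$, the $\lambda_j^2\int f k_t$ contributions cancel, and the boundary terms collapse to $f(0)\bigl(\lambda_j k_t(0)-k_t'(0)\bigr)+f(T)\bigl(\lambda_j k_t(T)+k_t'(T)\bigr)$. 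Explicit differentiation of $k_t$ shows $\lambda_j k_t(0)=k_t'(0)=\tfrac12 e^{-\lambda_j t}$ and $\lambda_j k_t(T)=-k_t'(T)=\tfrac12 e^{-\lambda_j(T-t)}$, so both brackets vanish and the identity $\langle f,k_t\rangle_{j}=f(t)$ follows.

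The main obstacle, although elementary, is the bookkeeping of the Dirac singularity of $-k_t''+\lambda_j^2 k_t$ at the diagonal together with the matching Robin-type identities $\lambda_j k_t(0)=k_t'(0)$ and $\lambda_j k_t(T)=-k_t'(T)$ at the endpoints; these are precisely what dictate the appearance of the endpoint terms $\lambda_j(h^2(0)+h^2(T))$ in \eqref{eq:RKHS_norm_OU_process} and, after tensorization over $j\geq 1$ via \eqref{eq:series_decomp}, the two boundary contributions in Theorem \ref{thm:RKHS:SPDE}.
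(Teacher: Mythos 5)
Your proof is correct, but it takes a genuinely different route from the paper. The paper derives the RKHS of $Y_j$ \emph{constructively}: it starts from the known RKHS of Brownian motion, adds an independent Gaussian initial value to obtain $H_{\bar\beta}$ with norm $\int_0^T (h')^2+h^2(0)$, represents the stationary OU process as a time-changed Brownian motion $Y_j=L\bar\beta$ via an explicit bijective bounded map $L$, and then computes $\|L^{-1}h\|_{H_{\bar\beta}}^2$ using the transformation rule for RKHSs under linear maps (Proposition 4.1 in \cite{MR3024389}); the norm formula \eqref{eq:RKHS_norm_OU_process} falls out of that computation. You instead take the norm as given and verify directly that $(H,\langle\cdot,\cdot\rangle_j)$ is a Hilbert space with continuous point evaluations in which the stationary covariance kernel $c_j(s,t)=\tfrac{1}{2\lambda_j}e^{-\lambda_j|t-s|}$ reproduces; your integration-by-parts bookkeeping, the jump $-1$ of $k_t'$ on the diagonal, and the Robin identities $\lambda_jk_t(0)=k_t'(0)$, $\lambda_jk_t(T)=-k_t'(T)$ all check out (and the endpoint cases $t\in\{0,T\}$ work as well, with the diagonal jump absorbed into the boundary bracket). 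Your approach is more elementary and self-contained — no time-change trick, no RKHS-transformation machinery — at the price of having to guess the norm in advance and of two points you should make explicit: (a) density of $\operatorname{span}\{k_t\}$ in $(H,\langle\cdot,\cdot\rangle_j)$, which follows at once since $f\perp k_t$ for all $t$ forces $f(t)=\langle f,k_t\rangle_j=0$, so that Moore--Aronszajn uniqueness applies; and (b) the identification of the Cameron--Martin space of the Gaussian measure induced by $Y_j$ on $L^2([0,T])$ (the paper's definition via $C^{1/2}\mathcal{H}$ in \eqref{eq:RKHS:Hilbert:space}) with the RKHS of the continuous covariance kernel $c_j$ — standard, but it is the bridge between your kernel-based argument and the object the lemma actually refers to. The paper's constructive route generalizes more readily (the same two-step scheme — RKHS of a base process plus transformation — is reused for Theorem \ref{thm:upper:bound:RKHS:norm:M>1}), whereas yours gives a cleaner conceptual explanation of why the boundary terms $\lambda_j(h^2(0)+h^2(T))$ appear.
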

    \begin{proof}
        By Example 4.4 in \cite{MR3024389}, a scalar Brownian motion $(\beta(t))_{0\leq t\leq T}$ starting in zero has RKHS $H_{\beta}=\{h:h(0)=0, h\text{ absolutely continuous, }h,h'\in L^2([0,T])\}$ with norm $\norm{h}_{\beta}^2=\int_0^T(h'(t))^2\,dt$. Moreover, the Brownian motion $(\bar \beta(t))_{0\leq t\leq T}$ with $\bar \beta(t)=X_0+\beta(t)$, where $X_0$ is a standard Gaussian random variable independent of $(\beta(t))_{0\leq t\leq T}$ has RKHS $$H_{\bar \beta}=\{\alpha+h:\alpha\in\R,h\in H_{\beta}\}=H,\quad \norm{h}_{\bar \beta}^2=\int_0^T(h'(t))^2\,dt+h^2(0),$$
        as can be seen from Proposition 4.1 in \cite{MR3024389} or Example 12.28 in \cite{MR3967104}. To compute the RKHS of $Y_j$ we now proceed similarly as in Example 4.8 in \cite{MR3024389}. Define the bounded linear map $L:L^2([0,e^{2\lambda_j T}-1])\rightarrow L^2([0,T])$, $(Lf)(t)=(2\lambda_j)^{-1/2}e^{-\lambda_j t}f(e^{2\lambda_j t}-1)$. Then we have $L\bar \beta=Y_j$ in distribution and $L$ is bijective with inverse $L^{-1}h(s)=\sqrt{2\lambda_j (s+1)}h((2\lambda_j)^{-1}\log (s+1))$, $0\leq s\leq e^{2\lambda_j T}-1$. By Proposition 4.1 in \cite{MR3024389} (see also the discussion after \eqref{eq:RKHS:Hilbert:space}), we conclude that $H_{Y_j}=L(H_{\bar \beta})=L(H)=H$ with
        \begin{align*}
            \|h\|_{Y_j}^2&=\|L^{-1}h\|_{\bar \beta}^2 =\int_{0}^{e^{2\lambda_j T}-1}\Big(\frac{d}{ds}\sqrt{2\lambda_j (s+1)}h\Big(\frac{1}{2\lambda_j}\log (s+1)\Big)\Big)^2\,ds+2\lambda_j h^2(0)\\
            &=\int_0^T(\lambda_j h(t)+h'(t))^2\,dt+2\lambda_j h^2(0)\\           &=\lambda_j^2\int_0^Th^2(t)\,dt+\lambda_j(h^2(T)+h^2(0))+\int_0^T(h'(t))^2\,dt. \hspace*{40mm}\qedhere
        \end{align*}
    \end{proof}

    \subsubsection{RKHS of the SPDE}
    
    We compute next the RKHS of the process $X$. Let us start with the following series representation, which is independent of $\mathcal{H}_1$.
    
    \begin{lemma}\label{lem:RKHS:SPDE}
        The RKHS $(H_{X},\|\cdot\|_{X})$ of the process $X$ in \eqref{eq:stochConv} satisfies $$H_X=\Big\{h=\sum_{j\geq 1}h_je_j:h_j\in H,\|h\|_{X}<\infty\Big\}\quad\text{and}\quad \|h\|_{X}^2=\sum_{j\geq 1}\|h_j\|_{Y_j}^2.$$
    \end{lemma}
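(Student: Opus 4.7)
The plan is to use the orthogonal series decomposition $X(t)=\sum_{j\geq 1} Y_j(t)e_j$ from \eqref{eq:series_decomp} together with the independence of the scalar Ornstein--Uhlenbeck processes $Y_j$, and then appeal to Lemma \ref{lemma:RKHS_norm_OU_process} to reduce everything to the scalar RKHS computation. The guiding principle is that, for a centered Gaussian element $X$ taking values in a separable Hilbert space $\mathcal{K}$, the RKHS is isometrically isomorphic to the Gaussian space $G(X)=\overline{\mathrm{span}}\{\sc{X}{z}_{\mathcal{K}}:z\in\mathcal{K}^*\}\subset L^2(\Omega)$ via $Z\mapsto\mathbb{E}[Z X(\cdot)]$, with matching norms (see, e.g.~Chapter 12 of \cite{MR3967104} or \cite[Chapter 3.6]{gine_mathematical_2016}). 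Once this identification is in place for $X$ regarded as a Gaussian element of $L^2([0,T];\mathcal{H}_1)$, the independence of the $Y_j$ will make $G(X)$ split as an $L^2(\Omega)$-orthogonal direct sum of the $G(Y_j)$, which in turn forces $H_X$ to split as a Hilbert direct sum of copies of $H_{Y_j}$.

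Concretely, I would proceed in the following steps. First, identify $G(Y_j)=\overline{\mathrm{span}}\{Y_j(t):0\leq t\leq T\}$ as a closed subspace of $G(X)$ by noting that $Y_j(t)=\sc{X(t)}{e_j}_{\mathcal{H}}$. Independence of the $Y_j$ gives $G(Y_i)\perp G(Y_j)$ in $L^2(\Omega)$ for $i\neq j$. Completeness of $(e_j)$ in $\mathcal{H}$ together with the continuous dependence $z\mapsto \sc{X(t)}{z}$ yields that the $G(Y_j)$ jointly span $G(X)$, so
\begin{equation*}
    G(X)=\bigoplus_{j\geq 1} G(Y_j)\qquad\text{(orthogonal sum in }L^2(\Omega)\text{)}.
\end{equation*}
Decomposing $Z=\sum_j Z_j$ with $Z_j\in G(Y_j)$ and $\|Z\|_{L^2(\Omega)}^2=\sum_j\|Z_j\|_{L^2(\Omega)}^2$, the corresponding RKHS element is
\begin{equation*}
    h(t)=\mathbb{E}[Z\,X(t)]=\sum_{j\geq 1}\mathbb{E}[Z_j Y_j(t)]\,e_j=\sum_{j\geq 1}h_j(t)\,e_j,
\end{equation*}
where each $h_j=\mathbb{E}[Z_j Y_j(\cdot)]$ lies in $H_{Y_j}$ with $\|h_j\|_{Y_j}=\|Z_j\|_{L^2(\Omega)}$, by the isometry applied to $Y_j$ alone. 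Summing gives $\|h\|_X^2=\|Z\|_{L^2(\Omega)}^2=\sum_j\|h_j\|_{Y_j}^2$. Conversely, given $(h_j)$ with $h_j\in H_{Y_j}$ and $\sum_j\|h_j\|_{Y_j}^2<\infty$, the corresponding $Z_j\in G(Y_j)$ assemble to a well-defined $Z\in G(X)$ and produce such an $h$. Finally, combining with Lemma~\ref{lemma:RKHS_norm_OU_process} replaces each $H_{Y_j}$ by $H$, yielding the claimed characterization.

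The main technical point is justifying the orthogonal decomposition $G(X)=\bigoplus_{j\geq 1}G(Y_j)$ rigorously in the setting where $X$ only lives in the enlarged space $L^2([0,T];\mathcal{H}_1)$ and the series \eqref{eq:series_decomp} need not converge in $L^2([0,T];\mathcal{H})$. This is handled by noting that $G(X)$ is generated by linear functionals $z\in\mathcal{H}_1^*\subset\mathcal{H}$, each of which can be expanded in the orthonormal basis $(e_j)$ of $\mathcal{H}$, so that $\sc{X(t)}{z}=\sum_j\sc{z}{e_j}_{\mathcal{H}}Y_j(t)$ with $L^2(\Omega)$-convergence; this places the generators of $G(X)$ inside $\bigoplus_j G(Y_j)$, and the reverse inclusion is trivial. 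Independence of the embedding space in the statement then follows from the well-known fact that the RKHS of a Gaussian measure depends only on its law.
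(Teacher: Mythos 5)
Your proposal is correct and follows essentially the same route as the paper: both reduce the statement to the scalar Ornstein--Uhlenbeck computation of Lemma~\ref{lemma:RKHS_norm_OU_process} via the coordinate decomposition $X=\sum_{j\geq 1}Y_je_j$ and the independence of the $Y_j$. The only difference is which standard characterisation of the RKHS carries the direct-sum splitting — you use the Lo\`eve isometry between $H_X$ and the Gaussian space $G(X)\subset L^2(\Omega)$, while the paper factorises the covariance operator as $C_X\cong\bigoplus_{j\geq 1}\mu_j^{-2}C_{Y_j}$ on $L^2([0,T];\mathcal{H}_1)$ — and these are equivalent.
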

  
    Note that $h\in L^2([0,T];\mathcal{H})$ if and only if $h=\sum_{j\geq 1}h_je_j$ with $h_j\in L^2([0,T])$ and $\sum_{j\geq 1}\|h_j\|_{L^2([0,T])}^2<\infty$. In this case we have $h_j=\langle h,e_j\rangle$ for all $j\geq 1$. Moreover, since the $\lambda_j$ are bounded from below by a positive constant, we conclude that $H_X$ is indeed a subspace of $L^2([0,T];\mathcal{H})$.
    
    \begin{proof}[Proof of Lemma \ref{lem:RKHS:SPDE}]
        Choose $\mu_j=j$ for all $j\geq 1$. Then $X=\sum_{j\geq 1}j^{-1}Y_j\tilde e_j$ with orthonormal basis $\tilde e_j=je_j$ of $\mathcal{H}_1$ and the covariance operator $C_X$ of $X$ is given by
        \begin{align*}
            &C_X:L^2([0,T];\mathcal{H}_1)\rightarrow L^2([0,T];\mathcal{H}_1),\quad\sum_{j\geq 1}f_j\tilde e_j\mapsto \sum_{j\geq 1}j^{-2}(C_{Y_j}f_j)\tilde e_j.
        \end{align*}        
        with $C_{Y_j}:L^2([0,T])\rightarrow L^2([0,T])$ being the covariance operator of $Y_j$. Hence, using the definition of the RKHS given after \eqref{eq:RKHS:Hilbert:space}, the RKHS of $X$ consists of all elements of the form
        \begin{align*}
            h=C_X^{1/2}f=\sum_{j\geq 1}j^{-1}(C_{Y_j}^{1/2}f_j)\tilde e_j=\sum_{j\geq 1}(C_{Y_j}^{1/2}f_j) e_j
        \end{align*}
        with $f=\sum_{j\geq 1}f_j\tilde e_j\in L^2([0,T];\mathcal{H}_1)$ and we have
        \begin{align*}
            \|h\|_{X}^2&=\|C_X^{-1/2}h\|_{L^2([0,T];\mathcal{H}_1)}^2=\|f\|_{L^2([0,T];\mathcal{H}_1)}^2=\int_0^T\|f(t)\|_{\mathcal{H}_1}^2dt=\sum_{j\geq 1}\|f_j\|_{L^2([0,T])}^2<\infty.
        \end{align*}
        Using Lemma \ref{lemma:RKHS_norm_OU_process}, we can write $h=\sum_{j\geq 1}h_je_j$ with $h_j=C_{Y_j}^{1/2}f_j\in H$ and $$\|h_j\|_{Y_j}^2=\|C_{Y_{j}}^{-1/2}h_j\|_{L^2([0,T])}^2=\|f_j\|_{L^2([0,T])}^2.$$
        Inserting this above, the claim follows.
    \end{proof}
    
    \begin{proof}[Proof of Theorem \ref{thm:RKHS:SPDE}] 
    We first show 
    \begin{align}\label{eq:variation:H:X}
        H_X=\Big\{h=\sum_{j\geq 1}h_je_j:h_j\in H,\sum_{j\geq 1}(\lambda_j^2 \norm{h_j}^2_{L^2([0,T])}+\norm{h'_j}^2_{L^2([0,T])})<\infty\Big\},
    \end{align}
    meaning that the middle term in the squared RKHS norm $\|\cdot\|_{Y_j}^2$ can be dropped. By the calculus rules for Sobolev functions (cf.~\cite[Theorem 4.4]{MR3409135}), we have
    \begin{align*}
        \forall s,u\in[0,T]:\quad h_j^2(s)-h_j^2(u)=2\int_{u}^sh_j'(t)h_j(t)\,dt.
    \end{align*}
    Fix $j\geq 1$ for the moment and choose $t_0\in[0,T]$ such that $h_j^2(t_0)=T^{-1}\norm{h_j}^2_{L^2([0,T])}$. Then 
    \begin{align*}
        \lambda_j(h_j^2(T)+h_j^2(0))&=2\Big(\int_{t_0}^0+\int_{t_0}^T\Big)h_j'(t)h_j(t)\,dt+2\lambda_j\norm{h_j}^2_{L^2([0,T])}\\
        &\leq 2\lambda_j\norm{h_j'}_{L^2([0,T])}\norm{h_j}_{L^2([0,T])}+2\lambda_j\norm{h_j}^2_{L^2([0,T])}\\
        &\leq 2\lambda_j^2\norm{h_j}^2_{L^2([0,T])}+\norm{h_j'}^2_{L^2([0,T])}+\norm{h_j}^2_{L^2([0,T])},
    \end{align*}
    where we also used the Cauchy-Schwarz inequality, the fact that $T\geq 1$, and the inequality $2ab\leq a^2+b^2$, $a,b\in\mathbb{R}$. Summing over $j\geq 1$, we get
    \begin{align}\label{eq:middle:term:upper:bound}
        \sum_{j\geq 1}\lambda_j(h_j^2(T)+h_j^2(0))\leq \sum_{j\geq 1}(2\lambda_j^2 \norm{h_j}^2_{L^2([0,T])}+\norm{h'_j}^2_{L^2([0,T])}+\norm{h_j}^2_{L^2([0,T])}),
    \end{align}
    from which \eqref{eq:variation:H:X} follows, taking into account the discussion after Lemma \ref{lem:RKHS:SPDE}.

    Let us now write
        \begin{align*}
	        \tilde H_X 
	            & = \{h\in L^2([0,T];\mathcal{H}):A h,h'\in L^2([0,T];\mathcal{H})\}
	    \end{align*}
	    and 
	    \begin{align*}
	        \|h\|_{\tilde{H}_X}^2 
                &=\|A h\|_{L^2([0,T];\mathcal{H})}^2
                +\|h'\|_{L^2([0,T];\mathcal{H})}^2+\langle-Ah(0), h(0)\rangle_{\mathcal{H}}+\langle-Ah(T), h(T)\rangle_{\mathcal{H}}.
	    \end{align*}
	By the RKHS computations in Lemma \ref{lem:RKHS:SPDE} it remains to check that $H_X=\tilde{H}_X$ and $\norm{h}_X=\norm{h}_{\tilde H_X}$ for all $h\in H_X$. First, let $h=\sum_{j\geq 1}h_je_j\in H_X$. Then $h$ is absolutely continuous with
	    \begin{align}
	        h'&=\sum_{j\geq 1}h_j'e_j\in L^2([0,T];\mathcal{H}),\quad
	        Ah=-\sum_{j\geq 1}\lambda_jh_je_j\in L^2([0,T];\mathcal{H}).\label{eq:characterization:RKHS:SPDE}
	    \end{align}
	    Hence, $h\in\tilde H_X$ and therefore $H_X\subseteq\tilde{H}_X$. To see the second claim in \eqref{eq:characterization:RKHS:SPDE}, set $h^{(m)}=\sum_{j= 1}^mh_je_j$ and $g^{(m)}=-\sum_{j= 1}^m\lambda_jh_je_j$ for $m\geq 1$. Then, $h^{(m)}(t)$ and $g^{(m)}(t)$ are in $\mathcal{H}$ for all $t\in[0,T]$ and we have $Ah^{(m)}=g^{(m)}$ because $(\lambda_j,e_j)_{j\geq 1}$ is an eigensequence of $-A$. Moreover $h^{(m)}(t)\rightarrow h(t)$ and $Ah^{(m)}(t)=g^{(m)}(t)\rightarrow g(t)=\sum_{j\geq 1}\lambda_jh_j(t)e_j$ for a.e.~$t$. Since $A$ is closed, we conclude that $Ah(t)=g(t)$ for a.e.~$t$.
     
     Next, let $h\in\tilde H_X$. Then we can write $h=\sum_{j\geq 1}h_je_j$ with $h_j=\langle h,e_j\rangle\in L^2([0,T])$. Using also \cite[Proposition A.22]{liu_stochastic_2015}, the $h_j$ are absolutely continuous with $h_j'=\sc{h}{e_j}'=\sc{h'}{e_j}\in L^2([0,T])$. Hence, $h_j\in H$ for all $j\geq 1$. Moreover, the relations in \eqref{eq:characterization:RKHS:SPDE} continue to hold, as can be seen from the identities $\sc{Ah(t)}{e_j}=\lambda_jh_j(t)$ and $\sc{h'}{e_j}=h_j'$, and we have
     \begin{align}
         \|A h\|_{L^2([0,T];\mathcal{H})}^2&=\sum_{j\geq 1}\lambda_j^2 \norm{h_j}^2_{L^2([0,T])},\quad
         \|h'\|_{L^2([0,T];\mathcal{H})}^2=\sum_{j\geq 1}\norm{h'_j}^2_{L^2([0,T])}.\label{eq:norm:series:closed}
     \end{align}
     Hence, $h\in H_X$ and therefore also $\tilde H_X\subseteq H_X$. We conclude that $\tilde H_X= H_X$ and that the norms coincide, where the latter follows from \eqref{eq:norm:series:closed} and \eqref{eq:characterization:RKHS:SPDE}. Moreover, inserting 
     \begin{align*}&
     \langle-Ah(0), h(0)\rangle_{\mathcal{H}}+\langle-Ah(T), h(T)\rangle_{\mathcal{H}}\\
     &\qquad \leq  2\norm{Ah}^2_{L^2([0,T];\mathcal{H})} +
     \norm{h'}^2_{L^2([0,T];\mathcal{H})}
    +\norm{h}^2_{L^2([0,T];\mathcal{H})},
     \end{align*}
     the upper RKHS norm bound follows, as can be seen from \eqref{eq:middle:term:upper:bound}, \eqref{eq:characterization:RKHS:SPDE} and \eqref{eq:norm:series:closed}.
    \end{proof}
	    
     \subsubsection{RKHS of multiple measurements}
    
    In this section we deduce Theorem \ref{thm:upper:bound:RKHS:norm:M>1} from Theorem \ref{thm:RKHS:SPDE}. This requires the $K_1,\dots,K_M$ to lie in the dual space $\mathcal{H}_1'$. When $A=\Delta$ this is a Sobolev space of order $s>d/2-1$ (see the beginning of Section \ref{sec:RKHS:computations}). In Section \ref{app:RKHS:measurements:approximation:argument}, we give a second more technical proof based on an approximation argument, which provides the claim under the weaker assumption $K_1,\dots,K_M\in \mathcal{D}(A)$.

    \begin{proof}[First proof of Theorem~\ref{thm:upper:bound:RKHS:norm:M>1}] 
        For a non-decreasing sequence $(\mu_j)$ of positive real numbers, take
        $$V_\mu=\{f\in \mathcal{H}:\norm{f}_{V_\mu}^2=\sum_{j\geq 1}\mu_j^2\sc{f}{e_j}^2_{\mathcal{H}}<\infty\},$$
        and take $\mathcal{H}_1=V_\mu'$ to be the closure of $\mathcal{H}$ under the norm
        \begin{align*}
        \norm{z}^2_{V_\mu'} = \sum_{j\geq 1}\frac{1}{\mu_j^2}\sc{z}{e_j}^2_{\mathcal{H}}.
        \end{align*}
        Then, $V_\mu$ is continuously embedded in $\mathcal{H}$ and $(V_\mu,\mathcal{H},V_\mu')$ forms a Gelfand triple, i.e., $\mathcal{H}$ is identified with its dual and thus $\mathcal{H}$ is also continuously embedded in $ V_\mu'$. Moreover, we can extend $\sc{f}{g}=\sc{f}{g}_{\mathcal{H}}$ to pairs $f\in V_\mu$ and $g\in V_\mu'$ and we have the (generalised) Cauchy-Schwarz inequality
        \begin{align}\label{eq:generalized:CS}
            |\sc{f}{g}|\leq \norm{f}_{V_\mu}\norm{g}_{V_\mu'}.
        \end{align}
        We choose the sequence $(\mu_j)$ such that \eqref{eq:existence:SPDE} holds, meaning that $X$ can be considered as a Gaussian random variable in $L^2([0,T];V_\mu')$. For $K_1,\dots,K_M\in V_\mu$, consider the linear map
        \begin{align*}
            &L:L^2([0,T];V_\mu')\rightarrow L^2([0,T])^M,\quad Lf(t)= (\sc{K_k}{f(t)})_{k=1}^M, t\in[0,T].
        \end{align*}
        Then, $LX=X_K$ in distribution. Using \eqref{eq:generalized:CS}, it is easy to see that $L$ is a bounded operator with norm bounded by $(\sum_{k=1}^M\norm{K_k}_{V_\mu}^2)^{1/2}$:
        \begin{align*}
            \sum_{k=1}^M\int_0^T\sc{K_k}{f(t)}^2dt&\leq 
            \Big(\sum_{k=1}^M\norm{K_k}_{V_\mu}^2\Big)\norm{f}_{L^2([0,T];V_\mu')}^2.
        \end{align*}
        Next, we show that $L(H_X)=H^M$. First, for $(h_k)_{k=1}^M\in H^M$, the function 
        \begin{align}\label{eq:inverse:image}
        f=\sum_{k,l=1}^M G^{-1}_{k,l} K_kh_l\in H_{X}\quad\text{satisfies}\quad Lf=(h_k)_{k=1}^M.
       \end{align}
       Hence $H^ M \subseteq L(H_X)$. To see the reverse inclusion, let $f\in H_X$. Set $(h_k)_{k=1}^M=Lf$ such that $h_k(t)= \sc{K_k}{f(t)}$. By the definition of $H_X$ and properties of the Bochner integral (see, e.g., \cite[Proposition A.22]{liu_stochastic_2015}), the $h_k$ are absolutely continuous with derivatives $h_k'(t)=\sc{K_k}{f'(t)}$, and we have 
       \begin{align*}
           \int_0^T (h'_k(t))^2dt\leq \norm{K_k}_{\mathcal{H}}^2\int_0^T\norm{f'(t)}_{\mathcal{H}}^2dt=\norm{K_k}_{\mathcal{H}}^2\norm{f'}_{L^2([0,T];\mathcal{H})}^2<\infty.
       \end{align*}
       We get $h_k\in H$ for all $k=1,\dots,M$. Hence, $L(H_X)\subseteq H^M$ and therefore $L(H_{X})=H^M$. It remains to prove the bound for the norm. Using \eqref{eq:inverse:image}, the behavior of the RKHS under linear transformation (see \cite[Proposition 4.1]{MR3024389}) and Theorem \ref{thm:RKHS:SPDE}, we have
       \begin{align*}
           \norm{(h_k)_{k=1}^M}_{X_K}^2\leq \norm{\sum_{k,l=1}^M & G^{-1}_{k,l} K_kh_l}_{X}^2\leq 3\|\sum_{k,l=1}^M G^{-1}_{k,l}AK_kh_l\|_{L^2([0,T];\mathcal{H})}^2\\
           &+\|\sum_{k,l=1}^M G^{-1}_{k,l} K_kh_l\|_{L^2([0,T];\mathcal{H})}^2+2\| \sum_{k,l=1}^M G^{-1}_{k,l}K_kh_l'\|_{L^2([0,T];\mathcal{H})}^2.
       \end{align*}
       Using the definition of $G_A$, the last display becomes
        \begin{align}
            \norm{(h_k)_{k=1}^M}_{X_K}^2 &\leq 3\int_0^T \sum_{k,l=1}^M(G^{-1}G_{A} G^{-1})_{kl}h_k(t)h_l(t)\,dt+\int_0^T  \sum_{k,l=1}^M(G^{-1})_{kl}h_k(t)h_l(t)\,dt\nonumber\\
            &\quad+2\int_0^T  \sum_{k,l=1}^M(G^{-1})_{kl}h_k'(t)h_l'(t)\,dt,\label{eq:upper:bound:RKHS:norm:M>1}
        \end{align}
        and the claim follows from standard results for the operator norm of symmetric matrices.
    \end{proof}

      \begin{proof}[Proof of Corollary \ref{cor:upper:bound:RKHS:norm:M>1_Laplace}]
        Since the Laplace operator $\Delta$ is negative and self-adjoint, the stochastic convolution \eqref{eq:stochConv} is just the weak solution in \eqref{eq:weakSolution} and $\mathcal{H}=L^2(\Lambda)$. If $(K_k)_{k=1}^M=(K_{\delta,x_k})_{k=1}^M$ with $\norm{K}_{L^2(\R^d)}=1$, then $K_1,\dots,K_M$ have disjoint supports and satisfy the assumptions of Theorem \ref{thm:upper:bound:RKHS:norm:M>1} with $G=I_{M\times M}$ and $G_\Delta$ being a diagonal matrix with $(G_\Delta)_{kk}=\norm{\Delta K_{\delta,x_k}}^2_{L^2(\R^d)}$. By construction and the Cauchy-Schwarz inequality, we have $\norm{K_{\delta,x_k}}=1$ and $\norm{\Delta K_{\delta,x_k}}\leq \delta^{-2}\norm{\Delta K}_{L^2(\R^d)}$. From Theorem \ref{thm:upper:bound:RKHS:norm:M>1}, we obtain the RKHS $H_{X_K}=H^M$ of $X_K$ with the claimed upper bound on its norm, where we also used that $\delta^2\leq \norm{\Delta K}_{L^2(\R^d)}$ by assumption. 
    \end{proof}

	\subsection{Proof of the lower bounds}\label{sec:proof:optimality:result}
	
    In this section, we give the main steps of the proof of Theorem \ref{thm:lower:bound:M>1}, which follows a Gaussian route. First, we combine a classical Gaussian lower bound (based on two hypotheses) with arguments from the Feldman-Hajek theorem to formulate a lower bound scheme that is expressed in terms of covariance operators and RKHS norms. Second, we invoke the RKHS computations from Section \ref{Sec:RKHS:SPDE} to further reduce our analysis to $L^2$-distances of the involved (cross-)covariance kernels and their first and second derivatives. Finally, we use semigroup perturbation arguments to compute these distances in the setting of Assumption~\ref{assu:lowerBound}. The proofs of three key lemmas are deferred to the appendix.
 
	\subsubsection{Gaussian minimax lower bounds}\label{sec:general_lower_bound_setting}
	
    Let $(\mathbb{P}_{\vartheta})_{\vartheta\in\Theta}$ be a family of probability measures defined on the same measurable space with a parameter set $\Theta\subseteq \mathbb{R}^p$. For $\theta^0,\theta^1\in\Theta$, the (squared) Hellinger distance between $\mathbb{P}_{\theta^0}$ and $\mathbb{P}_{\theta^1}$ is defined by $H^2(\mathbb{P}_{\vartheta^0},\mathbb{P}_{\theta^1})=\int (\sqrt{\mathbb{P}_{\theta^0}}-\sqrt{\mathbb{P}_{\theta^1}})^2$ (see, e.g. \cite[Definition 2.3]{tsybakov_introduction_2008}). Moreover, if $\theta^0,\vartheta^1\in\Theta$ satisfy
    \begin{align}\label{eq_Hellinger_lower_bound_cond}\
    H^2(\mathbb{P}_{\theta^0},\mathbb{P}_{\vartheta^1})\leq 1,
    \end{align}
    then we have the lower bound
    \begin{align}\label{eq_Hellinger_lower_bound}
    \inf_{\hat\vartheta}\max_{\vartheta\in\{\vartheta^0,\vartheta^1\}}\mathbb{P}_\vartheta\Big(|\hat\vartheta-\vartheta|\geq \frac{|\vartheta^0-\vartheta^1|}{2}\Big)\geq \frac{1}{4}\frac{2-\sqrt{3}}{4}=:c_3,
    \end{align}
    where the infimum is taken over all $\R^p$-valued estimators~$\hat\vartheta$ and $|\cdot|$ denotes the Euclidean norm. For a proof of this lower bound, see~\cite[Theorem~2.2(ii)]{tsybakov_introduction_2008}.
    
    Next, let $\mathbb{P}_{\vartheta^0}$ and $\mathbb{P}_{\vartheta^1}$ be two Gaussian measures defined on a separable Hilbert space $\mathcal{Z}$ with expectation zero and positive self-adjoint trace-class covariance operators $C_{\vartheta^0}$ and $C_{\vartheta^1}$, respectively. By the spectral theorem, there exist strictly positive eigenvalues $(\sigma_j^2)_{j\geq 1}$ and an associated orthonormal system of eigenvectors $(u_j)_{j\geq 1}$ such that $C_{\vartheta^0}=\sum_{j\geq 1}\sigma_j^2(u_j\otimes u_j)$. Given the Gaussian measure $\mathbb{P}_{\vartheta^0}$, we can associate the RKHS $(H_{\theta^0},\|\cdot\|_{H_{\theta^0}})$ of $\mathbb{P}_{\vartheta^0}$ given by $H_{\theta^0}=\{h\in \mathcal{Z}:\|h\|_{H_{\vartheta^0}}<\infty\}$ and $\|h\|_{H_{\vartheta^0}}^2=\sum_{j\geq 1}\sigma_j^{-2}\sc{u_j}{h}_{\mathcal{Z}}^2$ (cf.~the beginning of Section \ref{sec:RKHS:computations}). Combining \eqref{eq_Hellinger_lower_bound_cond} with the RKHS machinery, we get the following lower bound.
	
	\begin{lemma}\label{lem:Gaussian:lower:bound}
	    In the above Gaussian setting, suppose that $(u_j)_{j\geq 1}$ is an orthonormal basis of $\mathcal{Z}$ and that 
	    \begin{align}\label{eq:Gaussian:lower:bound:condition}
	    \sum_{j\geq 1}\sigma_j^{-2}\|(C_{\vartheta^1}-C_{\vartheta^0})u_{j}\|_{H_{\vartheta^0}}^2
	    \leq 1/2.
	    \end{align}
	    Then the lower bound in \eqref{eq_Hellinger_lower_bound} holds, that is
	    \begin{align*}
	        \inf_{\hat\vartheta}\max_{\vartheta\in\{\vartheta^0,\vartheta^1\}}\mathbb{P}_\vartheta\Big(|\hat\vartheta-\vartheta|\geq \frac{|\vartheta^0-\vartheta^1|}{2}\Big)\geq c_3.
	    \end{align*}
	\end{lemma}
	
	Lemma \ref{lem:Gaussian:lower:bound} is a consequence of the proof of the Feldman-Hajek theorem \cite[Theorem 2.25]{da_prato_stochastic_2014} in combination with basic properties of the Hellinger distance and the minimax risk. A proof is given in Section \ref{proof:lem:Gaussian:lower:bound}.
    
    \subsubsection{Proof of Theorem \ref{thm:lower:bound:M>1}}
    
     Our goal is to apply Lemma \ref{lem:Gaussian:lower:bound} and Corollary \ref{cor:upper:bound:RKHS:norm:M>1_Laplace} to the Gaussian process $X_{\delta}$ under Assumption \ref{assu:lowerBound}. We assume without loss of generality that $\norm{K}_{L^2(\R^d)}=1$.
     We choose $\theta^0=(1,0,0)$ and $\theta^1\in\Theta_1\cup\Theta_2\cup\Theta_3$, meaning that the null model is $A_{\vartheta^0}=\Delta$ and the alternatives are  $A_{\theta^1}=\theta^1_1\Delta+\theta^1_2(\nabla \cdot b)+\theta^1_3$ for $\theta^1\in\R^3$, where $\theta^1$ lies in one of the parameter classes $\Theta_1$, $\Theta_2$ or $\Theta_3$. For $\vartheta\in\{\vartheta^0,\vartheta^1\}$, let $\P_{\theta,\delta}$ be the law of $X_{\delta}$ on $\mathcal{Z}=L^2([0,T])^M$, let  $C_{\vartheta,\delta}$ be its covariance operator, and let $(H_{\vartheta,\delta},\norm{\cdot}_{H_{\vartheta,\delta}})$ be the associated RKHS. For $(f_k)_{k=1}^M\in L^2([0,T])^M$, we have $C_{\theta,\delta}(f_k)_{k=1}^M=(\sum_{l=1}^MC_{\theta,\delta,k,l}f_l)_{k=1}^M$ with (cross-)covariance operators defined by
	\begin{align*}
	    &C_{\theta,\delta,k,l}:L^2([0,T])\rightarrow L^2([0,T]),\\ &C_{\theta,\delta,k,l}f_l(t)=\E_{\theta} [\sc{X_{\delta,l}}{f_l}_{L^2([0,T])}X_{\delta,k}(t)],\qquad 0\leq t\leq T
	\end{align*}
	(see also Section \ref{proof:lem:seriesBound:M>1} for more details). By stationarity of $X_\delta$ under Assumption~\ref{assu:lowerBound} 
	\begin{align*}
        C_{\theta,\delta,k,l}f_l(t)
        &=\int_0^t c_{\theta,\delta,k,l}(t-t')f_l(t')\,dt'+\int_t^T c_{\theta,\delta,l,k}(t'-t)f_l(t')\,dt',\quad 0\leq t\leq T
    \end{align*}
    with covariance kernels $c_{\theta,\delta,k,l}(t)= \E_{\theta} [X_{\delta,k}(t)X_{\delta,l}(0)]$, $0\leq t\leq T$.
    Following the notation of Section \ref{sec:general_lower_bound_setting}, let $(\sigma_j^2)_{j\geq 1}$ be the strictly positive eigenvalues of $C_{\vartheta^0,\delta}$ and let $(u_j)_{j\geq 1}$ with $u_j=(u_{j,k})_{k=1}^m\in L^2([0,T])^M$ be a corresponding orthonormal system of eigenvectors. By Corollary \ref{cor:upper:bound:RKHS:norm:M>1_Laplace}, we have $H_{\vartheta^0,\delta}=H^M$ as sets. Since $H^M$ is dense in $L^2([0,T])^M$, $(u_j)_{j\geq 1}$ forms an orthonormal basis of $L^2([0,T])^M$. This means that the first assumption of Lemma \ref{lem:Gaussian:lower:bound} is satisfied. To verify the second assumption in \eqref{eq:Gaussian:lower:bound:condition}, we will use the bound for the RKHS norm in Corollary \ref{cor:upper:bound:RKHS:norm:M>1_Laplace}.
	
	\begin{lemma}\label{lem:seriesBound:M>1}
        In the above setting, we have
        \begin{align*}
            &\sum_{j=1}^\infty \sigma_j^{-2}\norm{(C_{\theta^0,\delta}-C_{\theta^1,\delta}) u_{j}}^2_{H_{\vartheta^0,\delta}}\\
            &\leq cT  \sum_{k,l=1}^M\Big(\frac{\norm{\Delta K}^4_{L^2(\R^d)}}{\delta^{8}}\norm{c_{\theta^0,\delta,k,l}-c_{\theta^1,\delta,k,l}}^2_{L^2([0,T])}+\norm{c_{\theta^0,\delta,k,l}''-c_{\theta^1,\delta,k,l}''}^2_{L^2([0,T])}\Big)
        \end{align*}
        for all $\delta^2\leq \norm{\Delta K}_{L^2(\R^d)}$ and all $T\geq 1$, where $c>0$ is an absolute constant.
    \end{lemma}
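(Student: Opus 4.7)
The plan is to recognize the series as the squared Hilbert--Schmidt norm of $T := C_{\theta^0,\delta}^{-1/2}\,D\,C_{\theta^0,\delta}^{-1/2}$, where $D = C_{\theta^0,\delta} - C_{\theta^1,\delta}$, and then apply the RKHS upper bound of Corollary \ref{cor:upper:bound:RKHS:norm:M>1_Laplace} twice to reduce everything to Hilbert--Schmidt norms of $D$ and of its derivatives, which can be evaluated directly from the explicit covariance kernels $c_{\theta,\delta,k,l}$.

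Since $C_{\theta^0,\delta}^{-1/2}u_j=\sigma_j^{-1}u_j$ and $C_{\theta^0,\delta}^{-1/2}:H_{\theta^0,\delta}\to\mathcal H$ is an isometry,
\[
\sum_{j\ge 1}\sigma_j^{-2}\|Du_j\|^2_{H_{\theta^0,\delta}}=\|T\|^2_{HS(\mathcal H,\mathcal H)}.
\]
Applying Corollary \ref{cor:upper:bound:RKHS:norm:M>1_Laplace} once to each $\|Du_j\|^2_{H_{\theta^0,\delta}}$ with $A_1=4\|\Delta K\|^2_{L^2(\R^d)}/\delta^4$ and $A_2=2$ and then summing in $j$ with weight $\sigma_j^{-2}$ reduces matters to controlling $S_A:=\sum_j\sigma_j^{-2}\|Du_j\|^2_{\mathcal H}$ and $S_B:=\sum_j\sigma_j^{-2}\|(Du_j)'\|^2_{\mathcal H}$. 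The key observation is that the self-adjointness of $D$ together with the RKHS identity $\|h\|^2_{H_{\theta^0,\delta}}=\sum_l\sigma_l^{-2}\langle h,u_l\rangle^2$ allows for an index swap,
\[
S_A=\sum_k\|Du_k\|^2_{H_{\theta^0,\delta}},\qquad S_B=\sum_k\|\dot D^*u_k\|^2_{H_{\theta^0,\delta}},
\]
where $\dot D$ is the integral operator with matrix kernel $\partial_t\delta_{kl}(t,s)$, $\delta_{kl}:=\kappa_{\theta^0,\delta,k,l}-\kappa_{\theta^1,\delta,k,l}$. A second application of Corollary \ref{cor:upper:bound:RKHS:norm:M>1_Laplace}, together with $\|\dot D^*\|_{HS}=\|\dot D\|_{HS}$ and the fact that $(\dot D^*)'$ has matrix kernel $\partial_t\partial_s\delta_{kl}$, then gives
\[
\|T\|^2_{HS}\le A_1^2\|D\|^2_{HS}+2A_1A_2\|\dot D\|^2_{HS}+A_2^2\|\ddot D\|^2_{HS},
\]
with $\ddot D$ the integral operator whose matrix kernel is $\partial_t\partial_s\delta_{kl}$.

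To finish, the explicit formula $\kappa_{\theta,\delta,k,l}(t,s)=\mathbf 1_{t\ge s}\,c_{\theta,\delta,k,l}(t-s)+\mathbf 1_{t<s}\,c_{\theta,\delta,l,k}(s-t)$ together with Fubini evaluates each Hilbert--Schmidt norm as an integral $\int_0^T(T-u)(\ldots)\,du$ of a squared derivative of $f_{kl}:=c^0_{kl}-c^1_{kl}$ or $g_{kl}:=c^0_{lk}-c^1_{lk}$. This yields $\|D\|^2_{HS}\le 2TX$ and $\|\ddot D\|^2_{HS}\le 2TY$ with $X:=\sum_{k,l}\|f_{kl}\|^2_{L^2}$ and $Y:=\sum_{k,l}\|f''_{kl}\|^2_{L^2}$, and combining the interpolation $\|f'\|_{L^2}^2\le\|f\|_{L^2}\|f''\|_{L^2}$ with Cauchy--Schwarz in $(k,l)$ yields $\|\dot D\|^2_{HS}\le 2T\sqrt{XY}$. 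AM--GM absorbs the cross term via $2A_1A_2\|\dot D\|^2_{HS}\le A_1^2TX+A_2^2TY$, giving the claimed estimate with an absolute constant $c$.

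The delicate point---which I expect to be the main obstacle---is justifying the interpolation $\|f'\|_{L^2([0,T])}^2\le\|f\|_{L^2}\|f''\|_{L^2}$: on a bounded interval this is not automatic, since integration by parts produces a boundary contribution $[ff']_0^T$ that need not vanish. Handling this will require exploiting the structural identity $\partial_t\delta_{kl}=-\partial_s\delta_{kl}$ (which follows directly from the explicit form of $\kappa_{\theta,\delta,k,l}$) together with the one-dimensional Sobolev estimate $|f(0)|^2+|f(T)|^2\lesssim\|f\|_{L^2}^2+\|f'\|_{L^2}^2$ valid for $T\ge 1$, so that any residual boundary terms can be absorbed back into $X$ and $Y$ without degrading the coefficients $\|\Delta K\|^4/\delta^8$ in front of $X$ and $1$ in front of $Y$ on the right-hand side of the lemma.
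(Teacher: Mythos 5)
Your proposal follows essentially the same route as the paper's proof: a double application of Corollary \ref{cor:upper:bound:RKHS:norm:M>1_Laplace}, an index swap exploiting the symmetry of $D$ and anti-symmetry of its first-derivative operator via Parseval, reduction to Hilbert--Schmidt norms of $D$, $\dot D$, $\ddot D$, and an interpolation inequality (with boundary terms controlled through $\kappa_{k,l}(0)=\kappa_{l,k}(0)$, $\kappa'_{k,l}(0)=-\kappa'_{l,k}(0)$ and $T\geq 1$) to absorb the first-derivative term. The delicate point you flag is exactly the one the paper addresses, and it resolves it the way you anticipate.
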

    
    The proof of Lemma \ref{lem:seriesBound:M>1} can be found in Section \ref{proof:lem:seriesBound:M>1}. Moreover, combining Lemma \ref{lem:covFun}(ii) with perturbation arguments for semigroups, we prove the following upper bound in Section \ref{proof:lem:concrete_lower_bound}.
    
    \begin{lemma}\label{lem:concrete_lower_bound}
        In the above setting let $\theta^1=(\theta_1,\theta_2,\theta_3)\in\Theta_1\cup\Theta_2\cup\Theta_3$ with $M\geq 1$. Then there exists a constant $c>0$, depending only on $K$ such that
    	\begin{align*}
    	    &\sum_{k, l=1}^M\left(\delta^{-8}\norm{c_{\theta^0,\delta,k,l}-c_{\theta^1,\delta,k,l}}^2_{L^2([0,T])}+\norm{c_{\theta^0,\delta,k,l}''-c_{\theta^1,\delta,k,l}''}^2_{L^2([0,T])}\right)\\
    	    &\qquad\leq cM(\delta^{-2}(1-\theta_1)^2 + \theta_2^2 + \delta^2 \theta_3^2).
    	\end{align*}
    \end{lemma}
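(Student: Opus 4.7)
The strategy combines a Duhamel expansion of the covariance differences with a Bessel-type argument that exploits orthonormality of the localised kernels to reduce the $M^2$-fold sum to order $M$. Under Assumption \ref{assu:lowerBound} both laws are stationary, so Lemma \ref{lem:covFun}(ii) applies and yields
\[
c_{\theta,\delta,k,l}(t)=\int_0^\infty\sc{S^*_\theta(t+s)K_{\delta,x_k}}{S^*_\theta(s)K_{\delta,x_l}}\,ds.
\]
Since $K=\Delta^2\tilde K\in C_c^\infty$ by Assumption \ref{assu:lowerBound}(i), one may differentiate twice in $t$ under the integral and move two copies of $A^*_\theta$ onto the first semigroup factor, producing an integrand that remains in $L^2$.

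The second step is to apply Duhamel's formula
\[
S^*_{\theta^1}(u)-S^*_{\theta^0}(u)=\int_0^u S^*_{\theta^1}(u-r)\,R^*\,S^*_{\theta^0}(r)\,dr,\quad R^*=(\theta_1-1)\Delta-\theta_2(b\cdot\nabla)+\theta_3,
\]
and to rescale via Lemma \ref{rescaledsemigroup}. In the rescaled time variables the perturbation turns into $(\theta_1-1)\Delta+\delta\theta_2(b\cdot\nabla)^*+\delta^2\theta_3$, and together with the Jacobians from the $s$- and $t$-substitutions and the $\delta^{-8}$-weighting on the left-hand side, bookkeeping of the scaling factors produces precisely the three weights $\delta^{-2}(1-\theta_1)^2$, $\theta_2^2$, $\delta^2\theta_3^2$ per diagonal pair.

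The reduction from $M^2$ to $M$ relies on the observation that by Assumption \ref{assu:lowerBound}(iii) the kernels $\{K_{\delta,x_k}\}_{k=1}^M$ have pairwise disjoint supports and satisfy $\norm{K_{\delta,x_k}}_{L^2(\Lambda)}=1$, so they form an orthonormal family in $L^2(\Lambda)$. For every bounded linear operator $T$ on $L^2(\Lambda)$, Bessel's inequality then gives
\[
\sum_{k,l=1}^M|\sc{TK_{\delta,x_k}}{K_{\delta,x_l}}|^2\leq \sum_{k=1}^M\norm{TK_{\delta,x_k}}_{L^2(\Lambda)}^2\leq M\norm{T}_{\operatorname{op}}^2.
\]
Applying this with $T$ equal to the time-integrated Duhamel operators from the previous step produces the linear-in-$M$ factor. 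The necessary uniform operator-norm bounds on these Duhamel operators come from Lemma \ref{lem:semigroupProp} (for $\norm{A^*_{\theta,\delta,x}S^*_{\theta,\delta,x}(s)}\lesssim s^{-1}$), Lemma \ref{boundS*u} (exchanging negative-index norms of the kernel for decay in $s$), and the Feynman--Kac comparison of Lemma \ref{FeynmanKac} to pass uniformly to the free $\R^d$-semigroup where heat-kernel bounds are explicit and independent of $\delta$ and $x$.

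The hardest part is the $c''$-contribution: the two extra copies of $A^*_\theta$ combined with Duhamel produce three interlocking time scales (external $t$, covariance $s$, Duhamel $r$) and $s^{-\alpha}$-type singularities that must be shown to be jointly integrable after rescaling. The identity $K=\Delta^2\tilde K$ from Assumption \ref{assu:lowerBound}(i) is tailor-made to absorb these singular generators: the extra powers of $\Delta$ can be shifted onto the smooth compactly supported $\tilde K$, rendering the resulting integrals convergent. Collecting the bounds for the three perturbation terms from the Duhamel expansion and for both the $c$- and $c''$-contributions then yields the claimed inequality.
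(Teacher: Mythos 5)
Your overall architecture (the stationary covariance formula from Lemma \ref{lem:covFun}(ii), a perturbation expansion of $S^*_{\theta^1}-S^*_{\theta^0}$, rescaling via Lemma \ref{rescaledsemigroup}, and exploiting disjointness of the supports to reduce the double sum to order $M$) matches the paper's, and your first Bessel inequality $\sum_{k,l}|\sc{T_tK_{\delta,x_k}}{K_{\delta,x_l}}|^2\le\sum_k\norm{T_tK_{\delta,x_k}}^2_{L^2(\Lambda)}$ is a legitimate, arguably cleaner, alternative to the paper's treatment of the off-diagonal terms, which instead uses Gaussian heat-kernel decay $e^{-c'|x_k-x_l|^2/(\delta^2t)}$ together with the packing bound of Lemma \ref{lem:sum:inverse:packing} for $\delta$-separated points. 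The paper also avoids your single Duhamel expansion with $R^*=(\theta_1-1)\Delta-\theta_2(b\cdot\nabla)+\theta_3$: it works with the exact diagonalisation \eqref{eq:diagonalizable}, $S^*_{\theta^1}(t)=U_{\theta^1}^{-1}e^{t\theta_1\Delta}U_{\theta^1}e^{t\tilde c_{\theta^1}}$, telescoping into six terms, and reserves a Duhamel-type identity only for the diffusivity difference $e^{t\Delta}-e^{t\theta_1\Delta}$.

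The genuine gap is your second inequality $\sum_k\norm{T_tK_{\delta,x_k}}^2\le M\norm{T_t}_{\operatorname{op}}^2$ together with the claim that the cited lemmas supply the needed operator-norm bounds. The target is $\sum_{k,l}\norm{\kappa_{k,l}}_{L^2([0,T])}^2\lesssim M(\delta^6(1-\theta_1)^2+\delta^8\theta_2^2+\delta^{10}\theta_3^2)$, and the positive powers of $\delta$ do not come from any operator norm: they come from the decay of the semigroup applied to the specific oscillating kernel $K=\Delta^2\tilde K$ (Lemma \ref{boundS*u} gives $\norm{S^*_{\theta,\delta,x}(t)K}\lesssim 1\wedge t^{-2-d/4+\varepsilon}$ only because $K$ has four derivatives to spare). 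The time-integrated operators themselves have operator norms of order one uniformly in $\delta$ — already in the null model $\int_0^\infty e^{s\Delta}e^{(t+s)\Delta}ds=\tfrac12(-\Delta)^{-1}e^{t\Delta}$ — so $M\norm{T_t}_{\operatorname{op}}^2$ cannot produce any $\delta$-gain. Worse, Lemma \ref{lem:semigroupProp} gives only $\norm{A^*_\theta S^*_\theta(r)}_{\operatorname{op}}\lesssim r^{-1}$, which is not integrable at $r=0$, so the Duhamel integral $\int_0^uS^*_{\theta^1}(u-r)R^*S^*_{\theta^0}(r)\,dr$ with the second-order perturbation $(\theta_1-1)\Delta$ cannot be controlled in operator norm at all; the extra Laplacian must be put onto $K=\Delta^2\tilde K$ or split between the two semigroup factors by Cauchy-Schwarz, exactly as the paper does through the identity $e^{t\theta_1\Delta}-e^{t\Delta}=(1-\theta_1)\int_0^te^{s(\theta_1-1)\Delta}ds\,e^{t\Delta}(-\Delta)$ and the estimate \eqref{eq:kappa_1}. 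To repair the argument you should stop after the first Bessel step and bound $\int_0^T\sup_k\norm{T_tK_{\delta,x_k}}^2_{L^2(\Lambda)}dt$ directly with the kernel-specific estimates of Lemmas \ref{boundS*u} and \ref{FeynmanKac}; as written, the operator-norm step discards precisely the smallness the lemma asserts.
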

    Choosing consecutively 
	\begin{align*}
	    \vartheta^1&=(\theta_1,0,0)\in\Theta_1, \qquad \theta_1=1+c_2\frac{\delta}{\sqrt{TM}},\nonumber\\
	    \vartheta^1&=(1,\theta_2,0)\in\Theta_2, \qquad \theta_2=c_2\frac{1}{\sqrt{TM}},\nonumber\\
	    \vartheta^1&=(1,0,\theta_3)\in\Theta_3, \qquad \theta_3=c_2\min\Big(1,\frac{\delta^{-1}}{\sqrt{TM}}\Big),
	\end{align*}
        Theorem \ref{thm:lower:bound:M>1} follow from Lemma \ref{lem:Gaussian:lower:bound} in combination with Lemmas \ref{lem:seriesBound:M>1} and \ref{lem:concrete_lower_bound}.
	\qed

 \appendix
	
	\section{Additional proofs}\label{app:additional:proofs}
	
	\subsection{Additional proofs from Section \ref{sec:main}}\label{sec:remainingProofs}
	
	The proof of invertibility of the observed Fisher information is classical when the solution process is a multivariate Ornstein-Uhlenbeck process \cite{kuchler2006exponential}, but requires a different proof for the Itô processes $X_{\delta,k}^A$.
	
	 \begin{proof}[Proof of Lemma \ref{lem:FisherInvertible}]
	 It is enough to show that the first summand with $k=1$ in the definition of the observed Fisher information is $\P$-almost surely positive definite. By a density argument we can assume without loss of generality that $K\in C^{\infty}_{c}(\R^d)$. Define a symmetric matrix $\beta\in\R^{p\times p}$, $\beta_{ij}=\sc{A^*_i K_{\delta,x_1}}{ A^*_{j} K_{\delta,x_1}}$ and suppose for $\lambda\in\R^p$ that
	 \begin{equation*}
	     0=\sum_{i,j=1}^p \lambda_i \lambda_{j}\beta_{ij}=\norm{\sum_{i=1}^p\lambda_i A_i^* K_{\delta,x_1}}^2.
	 \end{equation*} 
	 By linear independence this yields $\lambda=0$, and so $\beta$ is invertible. It follows that 
	 \begin{equation*}
	     dX_{\delta,1}^A(t) = (\sc{X(t)}{A^*_{\theta}A^*_iK_{\delta,x_1}})_{i=1}^p dt + \beta^{1/2} d\bar{W}(t)
	 \end{equation*}
	 with a $p$-dimensional Brownian motion $\bar{W}(t)=\beta^{-1/2}(\sc{W(t)}{A_i^* K_{\delta,x_1}})_{i=1}^p$. Then  $Y=\beta^{-1/2}X^A_{\delta,1}$ satisfies $dY(t) = \alpha(t)dt + d\bar{W}(t)$ for some $p$-dimensional Gaussian process $\alpha$. Invertibility of $\int_0^T X_{\delta,1}^A(t)X_{\delta,1}^A(t)^\top dt$ is equivalent to the invertibility of $\int_0^T Y(t)Y(t)^{\top}dt$. Applying first the innovation theorem, cf. \cite[Theorem 7.18]{liptser_statistics_2001}, componentwise and then the Girsanov theorem for multivariate diffusions, this is further equivalent to the $\P$-almost sure invertibility of $\int_0^T \bar{W}(t)\bar{W}(t)^{\top}dt$. The result is now obtained from noting that the determinant of the $p\times p$ dimensional random matrix $(\bar{W}(t_1),\dots,\bar{W}(t_p))$ is $\P$-almost surely not zero for any pairwise different time points $t_1,\dots,t_p$, because $\bar{W}$ has independent increments.
	\end{proof}
	
	\begin{proof}[Proof of Lemma \ref{lem:X0}]
 		 	 It is enough to prove the claim for $A^*_i\in\{1,D_j,D_{jk}\}$ with $n_i\in\{0,1,2\}$.   Let $u_i=\delta^{-n_i}(v_i)_{\delta,x}$ for $v_i=A^*_{i}K$. Suppose first $X_0\in L^p(\Lambda)$. The scaling in Lemma \ref{rescaledsemigroup},  the Hölder inequality and Lemma  \ref{boundS*u} applied to $\delta=1$, $s=0$, yield for $1/p+1/q=1$
 		\begin{align*}
 		   & \sup_{x\in\mathcal{J}} \sc{X_0}{S^*_{\theta}(t)u_i}^2
 		     \lesssim \norm{S_{\theta}(t)X_0}_{L^{p}(\Lambda)}^2\sup_{x\in\mathcal{J}} \norm{u_i}^2_{L^q(\Lambda)}
 		     \lesssim \delta^{d(1-2/p)-2n_i}\norm{X_0}_{L^{p}(\Lambda)}^2,
 		\end{align*}
 		The same Lemmas applied to $s=n_i$ also show for $\varepsilon,\varepsilon'>0$
 		\begin{align*}
 		   & \sup_{x\in\mathcal{J}}\int^T_{\varepsilon'}\sc{X_0}{S^*_{\theta}(t)u_i}^2dt 
 		    \leq \norm{X_0}^2\sup_{x\in\mathcal{J}}\int^T_{\varepsilon'}\norm{S^*_{\theta}(t)u_i}^2 dt \\
 		 & \quad \lesssim \delta^{-2n_i}\sup_{x\in\mathcal{J}}\int^T_{\varepsilon'}\norm{S^*_{\theta,
 		    \delta,x}(t\delta^{-2})v_i}^2_{L^2(\Lambda_{\delta,x})} dt
 		    \lesssim \delta^{-2n_i}\int_{\varepsilon'}^T (t\delta^{-2})^{-n_i-d/2+\varepsilon}dt.
 		\end{align*}	
 	    Assumption \ref{assump:mainAssump}(ii) implies $1-n_k-d/2<0$, and so the last line is of order $O((\varepsilon')^{1-n_i-d/2+\varepsilon}\delta^{d-2\varepsilon})$. After splitting up the integral we conclude
 	    \begin{align*}
 	        \sup_{x\in\mathcal{J}}\int^T_{0}\sc{X_0}{S^*_{\theta}(t)u_i}^2 dt \lesssim \varepsilon' \delta^{d(1-2/p)-2n_i} + (\varepsilon')^{1-n_i-d/2+\varepsilon}\delta^{d-2\varepsilon}.
 	    \end{align*} 
 		Choosing $\varepsilon'=\delta^{\frac{2n_i+2d/p-2\varepsilon}{n_i+d/2-\varepsilon}}$ yields the order $O(\delta^{h(p)-\varepsilon''})$ with the function $h(p)=d(1-2/p)+2(n_i+d/p)/(n_i+d/2)-2n_i$ for any $\varepsilon''>0$.  We get $h(2)=2-2n_i$ and $h'(p)>0$.  From this obtain the claim when $X_0\in L^p(\Lambda)$.  
 		
 		Let now $X_0 = \int_{-\infty}^0 S_{\theta}(-t')dW(t')$ and $c_{\theta} \leq 0$. By Itô's isometry, the $\delta$-scaling and changing variables we get 
 		\begin{align*}
 			\E[\sc{X_0}{S^*_{\theta}(t)u_i}^2] & = \int_0^\infty \norm{S^*_{\theta}(t'+t)u_i}^2dt'= \delta^{2-2n_i}\int_0^\infty \norm{S_{\theta,\delta,x}^*(t'+t\delta^{-2})v_i}^2_{L^2(\Lambda_{\delta,x})} dt'.
 		\end{align*}
 		By Lemma  \ref{boundS*u} and $\tilde{c}_{\theta}\leq 0$  the integral is uniformly bounded in $x\in\mathcal{J}$ and $0\leq t\leq T$ and converges to zero by dominated convergence, because the integrand does so as $\delta\rightarrow 0$. From this obtain the claim in the stationary case.
 		\end{proof}
	
	\begin{theorem}
		\label{thm:MartingaleCLT}
		Let $M_{\delta}=(M_{\delta}(t))_{t\geq 0}$ be a family of continuous $p$-dimensional square integrable martingales with respect to the filtered probability space $(\Omega,\mathcal{F},(\mathcal{F}_t),\mathbb{P})$, with $M_{\delta}(0)=0$ and with quadratic covariation processes $([M_{\delta}]_{t})_{t\geq 0}$. If $T>0$ is such that 
	\[
		[M_{\delta}]_T \stackrel{\P}{\rightarrow} I_{p\times p}, \quad \delta \rightarrow 0,
	\]		
	then we have the convergence in distribution
	\[
		M_{\delta}(T)\stackrel{d}{\rightarrow}\mathcal{N}(0,I_{p\times p}),\quad \delta \rightarrow 0.
	\]
	\end{theorem}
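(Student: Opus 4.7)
The plan is to reduce the multivariate CLT to a one-dimensional statement via the Cram\'er--Wold device and then establish the scalar CLT for continuous square integrable martingales with an exponential martingale argument. Fix $u\in\R^p$ and set $N_\delta:=u\cdot M_\delta$. Then $N_\delta$ is a continuous scalar square integrable martingale with $N_\delta(0)=0$ and quadratic variation $[N_\delta]_t=u^\top[M_\delta]_t u$, so by the hypothesis $[N_\delta]_T\stackrel{\P}{\rightarrow}u^\top u=|u|^2$. By L\'evy's continuity theorem, Cram\'er--Wold reduces the claim to showing $\E[e^{iN_\delta(T)}]\rightarrow e^{-|u|^2/2}$ as $\delta\to 0$ for every $u\in\R^p$.

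For this one-dimensional convergence I would use the complex exponential local martingale $Z_\delta(t)=\exp(iN_\delta(t)+\tfrac12[N_\delta]_t)$, whose local-martingale property follows from It\^o's formula applied to $(x,y)\mapsto e^{ix+y/2}$. Introduce the stopping times $\tau_L=\inf\{t\geq 0:[N_\delta]_t\geq L\}\wedge T$. Because $|Z_\delta^{\tau_L}|\leq e^{L/2}$, the stopped process $Z_\delta^{\tau_L}$ is a bounded martingale of constant expectation one, so $\E[\exp(iN_\delta(T\wedge\tau_L)+\tfrac12[N_\delta]_{T\wedge\tau_L})]=1$. Multiplying through by $e^{-|u|^2/2}$ and rearranging gives the identity
\[
\E[e^{iN_\delta(T\wedge\tau_L)}]-e^{-|u|^2/2}=\E\Bigl[e^{iN_\delta(T\wedge\tau_L)}\bigl(1-e^{([N_\delta]_{T\wedge\tau_L}-|u|^2)/2}\bigr)\Bigr].
\]
On $\{[N_\delta]_T\leq L\}$ the integrand tends to zero in probability (since $[N_\delta]_{T\wedge\tau_L}=[N_\delta]_T\to|u|^2$) and is dominated by $1+e^{(L-|u|^2)/2}$, so dominated convergence controls it; the complementary event contributes at most $(1+e^{(L-|u|^2)/2})\P([N_\delta]_T>L)$. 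Finally, removing the stopping is cheap: $|\E[e^{iN_\delta(T)}-e^{iN_\delta(T\wedge\tau_L)}]|\leq 2\P(\tau_L<T)=2\P([N_\delta]_T>L)$. Sending first $\delta\to 0$ for a fixed large $L$ and then $L\to\infty$ yields the desired $\E[e^{iN_\delta(T)}]\to e^{-|u|^2/2}$.

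The only delicate point is the order of limits in the localization, which requires that the family $\{[N_\delta]_T\}_\delta$ is tight; but this is immediate from the hypothesis $[N_\delta]_T\stackrel{\P}{\rightarrow}|u|^2$, so there is no genuine obstruction and the argument is entirely standard. An alternative route avoiding exponential martingales is to apply Dambis--Dubins--Schwarz, writing $N_\delta(T)=B_\delta([N_\delta]_T)$ for a (possibly enlarged-space) Brownian motion $B_\delta$ and using continuity of Brownian paths together with $[N_\delta]_T\to|u|^2$; however, since $B_\delta$ itself depends on $\delta$, one still has to invoke a characteristic-function or Skorokhod coupling argument to conclude, making the exponential-martingale route the cleaner of the two.
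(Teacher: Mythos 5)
Your proof is correct, and its key step differs from the paper's. Both arguments begin with the same Cram\'er--Wold/projection reduction to a scalar continuous square integrable martingale $N_\delta$ with $[N_\delta]_T\stackrel{\P}{\rightarrow}|u|^2$. From there the paper invokes the Dambis--Dubins--Schwarz theorem to write $N_\delta(t)=w_\delta([N_\delta]_t)$ for Brownian motions $w_\delta$ on an extended space, and concludes via a Slutsky-type joint convergence $(w_\delta,[N_\delta]_T)\stackrel{d}{\rightarrow}(w_0,|u|^2)$ together with the continuous mapping theorem applied to the evaluation map $(f,t)\mapsto f(t)$. You instead work with the complex exponential local martingale $Z_\delta=\exp(iN_\delta+\tfrac12[N_\delta])$, localise at $\tau_L$, and pass to the limit in the resulting characteristic-function identity; all the estimates you give (boundedness of the stopped exponential by $e^{L/2}$, the identity $[N_\delta]_{T\wedge\tau_L}=\min([N_\delta]_T,L)$ by continuity of the bracket, the tail bound $\P(\tau_L<T)\leq\P([N_\delta]_T\geq L)\rightarrow0$ for $L>|u|^2$) are valid, and in fact for any fixed $L>|u|^2$ all error terms already vanish as $\delta\rightarrow0$, so the final limit $L\rightarrow\infty$ is not even needed. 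What each route buys: yours is self-contained, stays on the original probability space, and produces the characteristic-function limit directly; the paper's is shorter modulo DDS but requires the space extension and the joint-convergence step. Your closing concern about the DDS route --- that $w_\delta$ depends on $\delta$ --- is exactly what the paper's Slutsky argument resolves: since every $w_\delta$ has Wiener measure as its law, $w_\delta\stackrel{d}{\rightarrow}w_0$ is trivial, and joint convergence with a quantity converging in probability to a constant needs no coupling.
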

		\begin{proof}
			For $x\in\mathbb{R}^p$ the process $Y_{\delta}(t)=x^{\top}M_{\delta}(t)x$ defines a one dimensional continuous martingale with respect to $(\mathcal{F}_t)$ with $Y_{\delta}(0)=0$ and with quadratic variation $$[Y_{\delta}]_T \stackrel{\mathbb{P}}{\rightarrow} x^{\top}x,\quad \delta \rightarrow 0.$$An application of the Dambis-Dubins-Schwarz theorem (\cite[Theorem 3.4.6]{karatzas_brownian_1998}) shows $Y_{\delta}(t)=w_{\delta}([Y_{\delta}]_t)$ with scalar Brownian motions $(w_{\delta}(t))_{t \geq 0}$, which are possibly defined on an extension of the underlying probability space. From the last display Slutsky's lemma implies the joint weak convergence $(w_{\delta},[Y_{\delta}]_T)\stackrel{d}{\rightarrow}(w_0,x^{\top}x)$ on the product Borel sigma algebra of $C([0,\infty))\times \R$, where $C([0,\infty))$ is endowed with the uniform topology on compact subsets of $[0,\infty)$, and where $w_0$ is another scalar Brownian motion. The continuous mapping theorem with respect to $(f,t)\mapsto\phi(f,t)=f(t)$ yields then the result, noting that $w_0(x^{\top}x)$ has distribution $\mathcal{N}(0,x^{\top}x)$.
		\end{proof}
		
	\subsection{Proof of Proposition \ref{prop: reactiond2}}
	\label{sec: reaction}
	
	\begin{proof}
	Note first that $A_{\theta}=\Delta+\theta$ corresponds to $A_1=1$, $A_0=\Delta$ and with observed Fisher information $\mathcal{I}_{\delta}=\sum_{k=1}^M\int_0^T \sc{X(t)}{K_{\delta,x_k}}^2dt$. In particular, Assumptions \ref{assump:mainAssump}(i), (iii) and (iv) hold. 
	
     As in the proof of Theorem \ref{thm:clt}, we can suppose that $\norm{K}_{L^2(\R^d)}=1$. Recall from the basic decomposition \eqref{eq:basicError}, $	    \hat{\theta}_{\delta} = \theta + \mathcal{I}_{\delta}^{-1} \mathcal{M}_{\delta}$ and from the proof of Theorem \ref{thm:clt} that $\mathcal{M}_{\delta}=\mathcal{M}_{\delta}(T)$ for a square integrable martingale $(\mathcal{M}_{\delta}(t))_{0\leq t\leq T}$, whose quadratic variation at $t=T$ coincides with $\mathcal{I}_{\delta}$. We show below 
	    \begin{align}
	        \log(\delta^{-1})^{-1}\mathcal{I}_{\delta} = O_{\P}(1),\quad (\log(\delta^{-1})^{-1}\mathcal{I}_{\delta})^{-1}=O_{\P}(1).\label{eq:I_log}
	    \end{align} 
	    A well-known result about tail properties of square integrable martingales (e.g., \cite[3.8]{whitt_martingale_FCLT_2007}) therefore implies $\mathcal{M}_\delta=O_{\P}(\log(\delta^{-1})^{1/2})$, and we conclude from the basic decomposition that $\hat{\theta}_{\delta}=\theta + O_{\P}(\log(\delta^{-1})^{-1/2})$ as claimed. 
	    
	    For \eqref{eq:I_log} it is enough to show that $\mathcal{I}_\delta/\E[\mathcal{I}_\delta]\stackrel{\P}{\rightarrow}1$ and $\log(\delta^{-1})\E[\mathcal{I}_\delta]\asymp1$, which in turn holds if for some $c,C>0$, independent of $\delta$,
	    \begin{equation}
	    \label{eq:logbound}
	        c\leq \log(\delta^{-1})^{-1}\E[\mathcal{I}_\delta]\leq C,\quad \log(\delta^{-1})^{-2}\operatorname{Var}(\mathcal{I}_\delta)=o(1).
	    \end{equation}
	    As in the proofs of Lemmas \ref{ConvFisher}, \ref{ConvouterVar} and using their notation we compute
	    \begin{align*}
	       \E[\mathcal{I}_\delta]&\leq M\delta^{2}\int_0^T\int_0^{t\delta^{-2}}\sup_{x\in\mathcal{J}}\norm{S^*_{\theta,\delta,x}(t')K}_{L^2(\Lambda_{\delta,x})}^2dt'dt,\\
	       \operatorname{Var}(\mathcal{I}_\delta)&\lesssim M^2\sup_{x\in\mathcal{J}}\operatorname{Var}\left(\int_0^T\sc{X(t)}{K_{\delta,x}}^2dt\right)\\
	       &=M^2\sup_{x\in\mathcal{J}}4\delta^6\int_0^{T}\int_0^{t\delta^{-2}}\left(\int_0^{t\delta^{-2}-s}f_{1,1}(s,s')ds'\right)^2dsdt.
	    \end{align*}
	    By the supplement in Lemma \ref{boundS*u} we find in $d=2$ that \begin{align}
	       \sup_{x\in\mathcal{J}}\norm{S^*_{\theta,\delta,x}(t)K}_{L^2(\Lambda_{\delta,x})}\lesssim 1\wedge t^{-1/2},\quad t\geq 0, \delta\geq 0.\label{eq:semgroupBound_d2} 
	    \end{align}
	    Plugging this into the last display and using $M\delta^{2}\lesssim 1$ provides us with
	    \begin{align*}
	        \E[\mathcal{I}_\delta]
	        &\lesssim\int_0^T\int_0^{t\delta^{-2}}(1\wedge s^{-1})dsdt
	        \lesssim \int_0^T\log(t\delta^{-2})dt\lesssim \log(\delta^{-1}),\\
	        \operatorname{Var}(\mathcal{I}_\delta)
	        &\lesssim M^2\delta^6\left(\int_0^{T\delta^{-2}}(1\wedge (t')^{-1})dt'\right)\left(\int_0^{T\delta^{-2}}(1\wedge t^{-1/2})dt\right)^2\lesssim\log(\delta^{-1}).
	    \end{align*}
	    We are thus left with showing $ \E[\mathcal{I}_\delta]\gtrsim \log(\delta^{-1})$. First, note that $\norm{S^*_{\theta,\delta,x}(t)K}_{L^2(\R^2)}\geq e^{-T|\theta|}\norm{\tilde{S}_{\theta,\delta,x}(t)K}_{L^2(\R^2)}$ and decompose
	    \begin{align*}
	        &\norm{\tilde{S}_{\theta,\delta,x}(t)K}^2_{L^2(\R^2)} = \sc{\tilde{S}_{\theta,\delta,x}(t)K}{\tilde{S}_{\theta,\delta,x}(t)K}_{L^2(\R^2)}\\
	        &\quad = \norm{e^{t\Delta_0}K}^2_{L^2(\R^2)} + \sc{e^{t\Delta_0}K+\tilde{S}_{\theta,\delta,x}(t)K}{\tilde{S}_{\theta,\delta,x}(t)K-e^{t\Delta_0}K}_{L^2(\R^2)}.
	    \end{align*}
	    Recalling $K\geq 0$, the inner product here is uniformly in $x\in\mathcal{J}$ up to a universal constant upper bounded by
	    \begin{align*}
	        \sc{e^{t\Delta_0}K}{(e^{t\Delta_0}K^2)^{1/2}}_{L^2(\R^2)}(\delta t^{1/2} e^{-\delta^{-2}t^{-1}})^{1/2}\leq \norm{e^{t\Delta_0}K}_{L^2(\R^2)}\norm{e^{t\Delta_0}K^2}_{L^1(\R^2)}\delta t^{1/2},
	    \end{align*}
	    concluding by the Cauchy-Schwarz inequality and $\sup_{x\geq 0}x e^{-x}\lesssim 1$ in the last inequality. Since $\norm{e^{t\Delta_0}K^2}_{L^1(\R^2)}=\norm{K}^2_{L^2(\R^2)}$ and using \eqref{eq:semgroupBound_d2}, it thus follows for some $C>0$ that
	    \begin{align*}
	     &\norm{\tilde{S}_{\theta,\delta,x}(t)K}^2_{L^2(\R^2)} \geq  \norm{e^{t\Delta_0}K}^2_{L^2(\R^2)} - C(1\wedge t^{-1/2})\delta t^{1/2}.
	    \end{align*}
	    Hence, using $M\delta^2\gtrsim 1$,
	    \begin{align*}
	        \E[\mathcal{I}_\delta]
	        &\quad \geq  \int_0^{\delta^{-1}}\norm{e^{t\Delta_0}K}_{L^2(\R^2)}^2dt-\int_0^{\delta^{-1}}C\delta dt=\int_0^{\delta^{-1}}\norm{e^{t\Delta_0}K}_{L^2(\R^2)}^2dt-C.
	    \end{align*}
	    Suppose without loss of generality that the support of $K$ is contained in the unit ball $B_1(0)$. Writing $e^{t\Delta_0}K=q_t*K$ as convolution with the heat kernel $q_t(x)=(4\pi t)^{-1}\exp(-|x|^2/(4t))$ we have
	    \begin{align*}	                    \norm{e^{t\Delta_0}K}_{L^2(\R^2)}^2=\int_{\R^2}\left(\int_{B_1(0)}q_t(y-x)K(x)dx\right)^2dy.
	    \end{align*}
	    The heat kernel $q_t(x)$ is decreasing as $|x|\rightarrow\infty$. Hence, for $x\in B_1(0),$ we bound $q_t(y-x)\geq q_t(y+y/|y|)$ for any $y\in\R^2\setminus\{0\}$. Plugging this into the preceding display yields by $K\geq 0$
	    \begin{align*}
	        \norm{e^{t\Delta_0}K}_{L^2(\R^2)}^2&\geq\int_{\R^2}\left(\int_{B_1(0)}q_t(y+y/|y|)K(x)dx\right)^2dy\\
	        &=\norm{K}_{L^1(\R^2)}^2\int_{\R^2}q_t(y+y/|y|)^2dy
	        \gtrsim t^{-1}.
	    \end{align*}
	    In all, we conclude that $\E[\mathcal{I}_{\delta}]\gtrsim C'\int_0^{\delta^{-1}}t^{-1}dt-C$ for $C'>0$, implying the wanted lower bound in \eqref{eq:logbound}.
	\end{proof}

	\subsection{Proof of Lemma \ref{lem:Gaussian:lower:bound}}\label{proof:lem:Gaussian:lower:bound}
	By definition, we have
	\begin{align*}
	    \sum_{j\geq 1}\sigma_j^{-2}\|(C_{\vartheta^1}-C_{\vartheta^0})u_{j}\|_{H_{\vartheta^0}}^2=\sum_{j,k\geq 1} \sigma^{-2}_j\sigma^{-2}_{k}\sc{u_j}{(C_{\vartheta^1}-C_{\vartheta^0})u_{k}}_{\mathcal{Z}}^2.
	\end{align*}
	Combining this with \eqref{eq:Gaussian:lower:bound:condition} and the fact that $(u_j)_{j\geq 1}$ is an orthonormal basis of $\mathcal{Z}$, the infinite matrix $(\sc{u_j}{(C_{\vartheta^1}-C_{\vartheta^0})u_{k}}_{\mathcal{Z}}/(\sigma_j\sigma_k))_{j,k=1}^\infty$ defines an Hilbert-Schmidt operator $S$ on $\mathcal{Z}$. Let $(v_j,\mu_j)_{j\geq 1}$ be an eigensequence of $S$ with $(v_j)_{j\geq 1}$ being an orthonormal basis of $\mathcal{Z}$. Since $\|S\|_{\operatorname{HS}(\mathcal{Z})}^2\leq 1/2$ by \eqref{eq:Gaussian:lower:bound:condition}, we have $\tau_j:=\mu_j+1\in[1-2^{-1/2},1+2^{-1/2}]$ for all $j\geq 1$. Now, let $\{\xi_j\}_{j\geq 1}$ be a sequence of independent standard Gaussian random variables. Then the series 
	\begin{align*}
	    \sum_{j\geq 1}\xi_jC_{\vartheta^0}^{1/2}v_j\qquad\text{and}\qquad\sum_{j\geq 1}\sqrt{\tau_j}\xi_jC_{\vartheta^0}^{1/2}v_j
	\end{align*}
	converge a.s.~and their laws coincide with those of $\mathbb{P}_{\vartheta^0}$ and $\mathbb{P}_{\vartheta^1}$, respectively (see, e.g., \cite[Proof of Theorem 2.25]{da_prato_stochastic_2014} or \cite[Pages 166-167]{KukushAlexander2020GMiH}). By standard properties of the Hellinger distance (see, e.g., Equation (A.4) in \cite{reis_asymptotic_2011}), we have
	\begin{align}
	    H^2\Big(\bigotimes_{j\geq 1}\mathcal{N}(0,1),\bigotimes_{j\geq 1}\mathcal{N}(0,\tau_j)\Big)&\leq \sum_{j\geq 1}H^2(\mathcal{N}(0,1),\mathcal{N}(0,\tau_j))\nonumber\\
	    &\leq 2\sum_{j\geq 1}(\tau_j-1)^2=2\|S\|_{\operatorname{HS}(\mathcal{Z})}^2\leq 1.\label{eq:bound:HD:product:measure}
	\end{align}
	Moreover, defining $Q_{\vartheta^0}=\bigotimes_{j\geq 1}\mathcal{N}(0,1)$, $Q_{\vartheta^1}=\bigotimes_{j\geq 1}\mathcal{N}(0,\tau_j)$ and the measurable map $\mathcal{T}:\R^\infty\rightarrow \mathcal{Z}$ by $\mathcal{T}(\{\alpha_j\})=\sum_{j\geq 1}\alpha_jC_{\vartheta^0}^{1/2}v_j$ if the limit exists and $T(\{\alpha_j\})=0$ otherwise, the image measures satisfy $Q_{\vartheta^0}\circ \mathcal{T}^{-1}=\mathbb{P}_{\vartheta^0}$ and $Q_{\vartheta_1}\circ \mathcal{T}^{-1}=\mathbb{P}_{\vartheta_1}$. Finally, by the transformation formula, the minimax risk in \eqref{eq_Hellinger_lower_bound} can be written as $\inf_{\hat\theta}\max_{\theta\in\{\vartheta^0,\vartheta^1\}}Q_\theta(|\hat\vartheta\circ \mathcal{T}-\vartheta|\geq|\vartheta^0-\vartheta^1|/2)$, where the infimum is taken over all measurable functions from $\mathcal{Z}$ to $\R^p$. Allowing for general estimators depending on the whole coefficent vector in $\R^\infty$, the claim follows from \eqref{eq:bound:HD:product:measure} and \eqref{eq_Hellinger_lower_bound} applied to the product measures $Q_{\vartheta^0}$ and $Q_{\vartheta^1}$.\qed
	
	\subsection{Proof of Lemma \ref{lem:seriesBound:M>1}}\label{proof:lem:seriesBound:M>1}
    Let us recall some simple facts on the space $\mathcal{Z}=L^2([0,T])^M$ and a bounded linear operator $I:\mathcal{Z}\rightarrow \mathcal{Z}$. First, $\mathcal{Z}$ is a Hilbert space equipped with the inner product $\langle (f_k)_{k=1}^M,(g_k)_{k=1}^M\rangle=\sum_{j=1}^M\langle f_j,g_j\rangle_{L^2([0,T])}$. Second, $I$ can be represented by linear operators $I_{k,l}:L^2([0,T])\rightarrow L^2([0,T])$ such that $I(f_k)_{k=1}^M=(\sum_{l=1}^MI_{k,l}f_l)_{k=1}^M$. Finally, $I$ is a Hilbert-Schmidt operator if and only if all $I_{jk}$ are Hilbert-Schmidt operators and we have
    \begin{align*}
    \|I\|_{\operatorname{HS}(\mathcal{Z})}^2=\sum_{k,l=1}^M\|I_{k,l}\|_{\operatorname{HS}(L^2([0,T]))}^2,
    \end{align*}
    where $\norm{\cdot}_{\operatorname{HS}(L^2([0,T]))}$ denotes the Hilbert-Schmidt norm on $L^2([0,T])$. Recall also that $(\sigma_j^2)_{j\geq 1}$ are the strictly positive eigenvalues of $C_{\vartheta^0,\delta}$ and that $(u_j)_{j\geq 1}$ with $u_j=(u_{j,k})_{k=1}^m\in \mathcal{Z}$ is a corresponding orthonormal basis of eigenvectors. We first prove a more general version of Lemma \ref{lem:seriesBound:M>1}.

	\begin{lemma}\label{lem:seriesBound:M>1:version}
        Grant Assumption \ref{assu:lowerBound}. Consider an integral operator $I=(I_{k,l})_{k,l=1}^M:\mathcal{Z}\rightarrow \mathcal{Z}$, $I_{k,l}f(t) = \int_0^t \kappa_{k,l}(t-t')f(t')dt'+\int_t^T \kappa_{l,k}(t'-t)f(t')dt'$ with square integrable and twice continuously differentiable functions $\kappa_{k,l}$ satisfying $\kappa_{k,l}(0)=\kappa_{l,k}(0)$ and $\kappa'_{k,l}(0)=-\kappa'_{l,k}(0)$ for all $1\leq k,l\leq M$. Then we have
        \begin{align*}
            \sum_{j=1}^\infty \sigma_j^{-2}\norm{I u_{j}}^2_{H_{\vartheta^0,\delta}}&\leq  \sum_{k,l=1}^M\left(240T\frac{\norm{\Delta K}^4_{L^2(\R^d)}}{\delta^{8}}\norm{\kappa_{k,l}}^2_{L^2([0,T])}+200T\norm{\kappa_{k,l}''}^2_{L^2([0,T])}\right)
        \end{align*}
        for all $\delta^2\leq \norm{\Delta K}_{L^2(\R^d)}$ and all $T\geq 1$.
    \end{lemma}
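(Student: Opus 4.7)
The plan is to apply Corollary \ref{cor:upper:bound:RKHS:norm:M>1_Laplace} twice -- first at the level of individual vectors $Iu_j$ and then, in operator form, to bound $C_{\vartheta^0,\delta}^{-1}$ -- so that the desired series reduces to Hilbert-Schmidt norms of $I$, $DI$ and $D^2I$, all of which are directly computable from the kernels. Throughout let $\gamma=\norm{\Delta K}^2_{L^2(\R^d)}\delta^{-4}$ and let $D$ denote the componentwise time derivative on $\mathcal{H}=L^2([0,T])^M$.

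First, applying the corollary to each $Iu_j\in H^M$ and summing with weights $\sigma_j^{-2}$ yields
\[
\sum_{j\geq 1}\sigma_j^{-2}\norm{Iu_j}^2_{H_{\vartheta^0,\delta}}\leq 4\gamma A+2B,\qquad A=\norm{IC_{\vartheta^0,\delta}^{-1/2}}^2_{\operatorname{HS}(\mathcal{H})},\quad B=\norm{DIC_{\vartheta^0,\delta}^{-1/2}}^2_{\operatorname{HS}(\mathcal{H})}.
\]
The same corollary, rewritten as a quadratic-form inequality on $H^M$, gives $C_{\vartheta^0,\delta}^{-1}\leq 4\gamma\operatorname{Id}+2D^*D$, and hence $\operatorname{tr}(PC_{\vartheta^0,\delta}^{-1})\leq 4\gamma\operatorname{tr}(P)+2\operatorname{tr}(PD^*D)$ for every positive trace-class operator $P$ on $\mathcal{H}$.

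The next step extracts three symmetry properties of $I$ by direct kernel calculation: (i) $I^*=I$, because the kernel of $I$ pairs $\kappa_{k,l}$ on $\{s<t\}$ with $\kappa_{l,k}$ on $\{s>t\}$; (ii) $(DI)^*=-DI$, i.e.\ $DI$ is skew-adjoint, the assumption $\kappa_{k,l}(0)=\kappa_{l,k}(0)$ cancelling the boundary terms that appear upon differentiating $I$ in $t$; and (iii) $D^2I=I''$ as operators on $\mathcal{H}$, where $I''$ has the same form as $I$ with $\kappa_{k,l}$ replaced by $\kappa''_{k,l}$, this last identity requiring additionally $\kappa'_{k,l}(0)+\kappa'_{l,k}(0)=0$. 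Using (i) and cyclicity, $A=\operatorname{tr}(IC_{\vartheta^0,\delta}^{-1}I)\leq 4\gamma\norm{I}^2_{\operatorname{HS}}+2\norm{DI}^2_{\operatorname{HS}}$. For $B$, set $J=DI$; by (ii), $J^*J=-J^2\geq 0$, so $B=\operatorname{tr}(J^*JC_{\vartheta^0,\delta}^{-1})\leq 4\gamma\norm{DI}^2_{\operatorname{HS}}+2\operatorname{tr}(J^*JD^*D)$. Cyclicity identifies the last trace with $\norm{DJ^*}^2_{\operatorname{HS}}$, and by (ii)-(iii) one has $DJ^*=-D^2I=-I''$, so that $\operatorname{tr}(J^*JD^*D)=\norm{I''}^2_{\operatorname{HS}}$ and $B\leq 4\gamma\norm{DI}^2_{\operatorname{HS}}+2\norm{I''}^2_{\operatorname{HS}}$.

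Computing the three Hilbert-Schmidt norms by integrating the squared kernels on $[0,T]^2$ gives $\norm{I}^2_{\operatorname{HS}}\leq 2T\sum_{k,l}\norm{\kappa_{k,l}}^2_{L^2([0,T])}$ and analogously for $DI$, $I''$ with $\kappa'_{k,l}$, $\kappa''_{k,l}$. Substituting leaves an intermediate $\gamma\sum_{k,l}\norm{\kappa'_{k,l}}^2$ term, which I would absorb using Young's inequality $\norm{\kappa'_{k,l}}^2\leq (\alpha/2)\norm{\kappa_{k,l}}^2+(1/(2\alpha))\norm{\kappa''_{k,l}}^2$ with $\alpha\asymp\gamma$; the boundary contributions from the underlying integration by parts are controlled pairwise by the symmetries $\kappa_{k,l}(0)=\kappa_{l,k}(0)$ and $\kappa'_{k,l}(0)=-\kappa'_{l,k}(0)$ when summed over $(k,l)$. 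The main obstacle is the operator-theoretic identity $\norm{DJ^*}^2_{\operatorname{HS}}=\norm{I''}^2_{\operatorname{HS}}$ for the skew-adjoint $J=DI$: this is exactly what allows the bound to close with only $\norm{\kappa_{k,l}}^2$ and $\norm{\kappa''_{k,l}}^2$ on the right-hand side, whereas a naive iteration of the operator inequality would instead require an $L^2$-bound on $\kappa'''_{k,l}$, which is not provided by the twice-continuously-differentiable hypothesis.
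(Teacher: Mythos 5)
Your argument is essentially the paper's proof of Lemma \ref{lem:seriesBound:M>1:version} in operator/trace notation: the double application of Corollary \ref{cor:upper:bound:RKHS:norm:M>1_Laplace}, enabled by the self-adjointness of $I$ and skew-adjointness of $I'=DI$ (which the paper implements via Parseval in the eigenbasis $(u_j)$ rather than via cyclicity of the trace), the reduction to Hilbert--Schmidt norms of $I$, $I'$, $I''$, and the final interpolation eliminating $\|\kappa'_{k,l}\|_{L^2}$ all match. The only imprecision is minor: in the interpolation step the hypotheses $\kappa_{k,l}(0)=\kappa_{l,k}(0)$, $\kappa'_{k,l}(0)=-\kappa'_{l,k}(0)$ only cancel the boundary terms at $t=0$ pairwise, while the terms at $t=T$ are handled in the paper by separate Cauchy--Schwarz estimates using $T\geq 1$ (and pinning down the explicit constants $240$ and $200$ requires that careful bookkeeping rather than a generic $\alpha\asymp\gamma$ Young step).
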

    
    \begin{proof}[Proof of Lemma \ref{lem:seriesBound:M>1:version}]
    We divide the proof into the cases of single and multiple measurements.
    
    \paragraph*{Case $M=1$} If $M=1$, then we consider an integral operator $I:L^2([0,T])\rightarrow L^2([0,T])$, $If(t) = \int_0^T \kappa(|t-t'|)f(t')dt'$ with some square integrable and twice continuously differentiable function $\kappa$ satisfying $\kappa'(0)=0$.
        Define the operators 
        \begin{align*}
            I'f(t) &= \int_0^T \operatorname{sign}(t-t')\kappa'(|t-t'|)f(t')dt',\\
            I''f(t) &= \int_0^T \kappa''(|t-t'|)f(t')dt'.
        \end{align*}
        We show first 
        \begin{align}\label{eq:properties:i':I''}
            (If)'(t) = I'f(t),\quad (If)''(t) = I''f(t).
        \end{align}
        Indeed, after splitting up the integral defining $If(t)$ it follows from the chain rule that
        \begin{align*}
            (If)'(t) 
                &= \left(\int_0^t \kappa(t-t')f(t')dt' + \int_t^T \kappa(t'-t)f(t')dt'\right)'\\
                &= \kappa(0)f(t) + \int_0^t\kappa'(t-t')f(t')dt' - \kappa(0)f(t) - \int_t^T \kappa'(t'-t)f(t')dt',\\
            (If)''(t) 
                &= \kappa'(0)f(t) + \int_0^t\kappa''(t-t')f(t')dt' + \kappa'(0)f(t) + \int_t^T \kappa''(t'-t)f(t')dt',
        \end{align*}
        from which \eqref{eq:properties:i':I''} follows by inserting the assumption $\kappa'(0)=0$. Thus, Corollary~\ref{cor:upper:bound:RKHS:norm:M>1_Laplace} (applied with $M=1$) and \eqref{eq:properties:i':I''} yield for all $j\geq 1$
        \begin{align*}
            \norm{I u_{j}}^2_{H_{\vartheta^0,\delta}}
             & \quad \leq  4\delta^{-4} \norm{\Delta K}^2_{L^2(\R^d)} \norm{I u_{j}}^2_{L^2([0,T])} + 2\norm{(I u_{j})'}^2_{L^2([0,T])}\\
              &\quad =  4\delta^{-4} \norm{\Delta K}^2_{L^2(\R^d)} \norm{I u_{j}}^2_{L^2([0,T])} + 2\norm{I' u_{j}}^2_{L^2([0,T])}
        \end{align*}
        for all $\delta^2\leq \norm{\Delta K}_{L^2(\R^d)}$ and all $T\geq 1$. By construction, $I$ is symmetric, while $I'$ is anti-symmetric, implying that 
         $\sc{Iu_j}{u_{j'}}_{L^2([0,T])}^2=\sc{u_j}{Iu_{j'}}_{L^2([0,T])}^2$ and $\sc{I'u_{j}}{u_{j'}}_{L^2([0,T])}^2=\sc{u_{j}}{I'u_{j'}}_{L^2([0,T])}^2$ for all $j,j'\geq 1$. Combining this with Parseval's identity, we get
        \begin{align*}
                \norm{I u_{j}}^2_{H_{\vartheta^0,\delta}}&\leq   \quad\sum_{j'=1}^\infty\big(4\delta^{-4} \norm{\Delta K}^2_{L^2(\R^d)} \sc{u_{j}}{Iu_{j'}}^2_{L^2([0,T])} + 2\sc{u_{j}}{I'u_{j'}}^2_{L^2([0,T])}\big).
        \end{align*}
        Multiplying the right-hand side with $\sigma_{j}^{-2}$ and summing over $j\geq 1$ yields
        \begin{align*}
            \sum_{j'=1}^\infty\big(4\delta^{-4} \norm{\Delta K}^2_{L^2(\R^d)} \norm{Iu_{j'}}^2_{H_{\vartheta^0,\delta}} + 2\norm{I'u_{j'}}^2_{H_{\vartheta^0,\delta}}\big),
        \end{align*}
        as can be seen from \eqref{eq:RKHS:Hilbert:space}.
        Applying again Corollary~\ref{cor:upper:bound:RKHS:norm:M>1_Laplace} and the definition of the Hilbert-Schmidt norm, we arrive at
        \begin{align*}
            \sum_{j=1}^\infty \sigma_j^{-2}\norm{Iu_j}^2_{H_{\vartheta^0,\delta}} 
            &\leq 16\frac{\norm{\Delta K}^4_{L^2(\R^d)}}{\delta^{8}} \norm{I}^2_{\operatorname{HS}(L^2([0,T]))}\\
            &+ 16\frac{\norm{\Delta K}^2_{L^2(\R^d)}}{\delta^{4}} \norm{I'}^2_{\operatorname{HS}(L^2([0,T]))} + 4\norm{I''}^2_{\operatorname{HS}(L^2([0,T]))}
        \end{align*}
        for all $\delta^2\leq \norm{\Delta K}_{L^2(\R^d)}$ and all $T\geq 1$. Inserting
	    \begin{align}
	    &\norm{I}^2_{\operatorname{HS}(L^2([0,T]))} 
	        = \int_0^T\int_0^T \kappa^2(|t-t'|)dtdt' \leq 2T\norm{\kappa}^2_{L^2([0,T])},\nonumber\\
	        &\norm{I'}^2_{\operatorname{HS}(L^2([0,T]))}\leq 2T \norm{\kappa'}^2_{L^2([0,T])},\quad  \norm{I''}^2_{\operatorname{HS}(L^2([0,T]))}\leq 2T \norm{\kappa''}^2_{L^2([0,T])},\label{eq:HS:l2:kernel}
	\end{align}
	we get
        \begin{align}
            \sum_{j}^\infty \sigma_j^{-2}\norm{Iu_j}^2_{H_{\vartheta^0,\delta}} 
            &\leq 32T\frac{\norm{\Delta K}^4_{L^2(\R^d)}}{\delta^{8}} \norm{\kappa}^2_{L^2([0,T])}\nonumber\\
            &+ 32T\frac{\norm{\Delta K}^2_{L^2(\R^d)}}{\delta^{4}} \norm{\kappa'}^2_{L^2([0,T])} + 8T\norm{\kappa''}^2_{L^2([0,T])}\label{eq:lem:seriesBound}
        \end{align}
	for all $\delta^2\leq \norm{\Delta K}_{L^2(\R^d)}$ and all $T\geq 1$.
	The claim now follows from an interpolation inequality (see, e.g., \cite{MR2759829}). To get precise constants with respect to $T$, we give a self-contained argument. By partial integration and the fact that $\kappa'(0)=0$, we have
	\begin{align*}
	    \int_0^T(\kappa'(t))^2\,dt=-\int_0^T\kappa''(t)\kappa(t)\,dt+\kappa'(T)\kappa(T).
	\end{align*}
	Let $t_0\in[0,T]$ such that $\kappa(t_0)=T^{-1}\int_0^T\kappa(t)\,dt$. Then, by the Cauchy-Schwarz inequality, we have
	\begin{align*}
	    \kappa^2(T)&=2\int_{t_0}^T\kappa'(t)\kappa(t)\,dt+\Big(T^{-1}\int_0^T\kappa(t)\,dt\Big)^2\\
	    &\leq 2\|\kappa\|_{L^2([0,T])}\|\kappa'\|_{L^2([0,T])}+T^{-1}\|\kappa\|_{L^2([0,T])}^2
	\end{align*}
	and similarly 
	\begin{align*}
	    (\kappa'(T))^2&=(\kappa'(T))^2-(\kappa'(0))^2\\
	    &=\int_0^T 2\kappa''(t)\kappa'(t)\,dt\leq 2\|\kappa'\|_{L^2([0,T])}\|\kappa''\|_{L^2([0,T])}.
	\end{align*}
	Combining these estimates, using also the Cauchy-Schwarz inequality, the fact that $T\geq 1$ and the inequality $2ab\leq \epsilon a^2+\epsilon^{-1}b^2$, $\epsilon>0$, $a,b\in\R$ consecutively with $\epsilon\in\{1/4, 1/\sqrt{2},1\}$, we get
	\begin{align*}
	    \|\kappa'\|_{L^2([0,T])}^2&\leq \|\kappa\|_{L^2([0,T])}\|\kappa''\|_{L^2([0,T])}\\
	    &+2\sqrt{\|\kappa\|_{L^2([0,T])}\|\kappa''\|_{L^2([0,T])}}\|\kappa'\|_{L^2([0,T])}\\
	    &+\sqrt{2}\sqrt{\|\kappa'\|_{L^2([0,T])}\|\kappa''\|_{L^2([0,T])}}\|\kappa\|_{L^2([0,T])}\\
	    &\leq  \|\kappa\|_{L^2([0,T])}\|\kappa''\|_{L^2([0,T])}\\
	    &+4\|\kappa\|_{L^2([0,T])}\|\kappa''\|_{L^2([0,T])}+\|\kappa'\|_{L^2([0,T])}^2/4\\
	    &+\|\kappa'\|_{L^2([0,T])}\|\kappa\|_{L^2([0,T])}/2+\|\kappa\|_{L^2([0,T])}\|\kappa''\|_{L^2([0,T])}\\
	    &\leq 6\|\kappa\|_{L^2([0,T])}\|\kappa''\|_{L^2([0,T])}+\|\kappa'\|_{L^2([0,T])}^2/2+\|\kappa\|_{L^2([0,T])}^2/4
	\end{align*}
	and thus
	\begin{align*}
	    \|\kappa'\|_{L^2([0,T])}^2\leq 12\|\kappa\|_{L^2([0,T])}\|\kappa''\|_{L^2([0,T])}+\|\kappa\|_{L^2([0,T])}^2/2.
	\end{align*}
	Using again the inequality $2ab\leq a^2+b^2$, $a,b\in\R$, we conclude that 
	\begin{align*}
	    &32T\frac{\norm{\Delta K}^2_{L^2(\R^d)}}{\delta^{4}} \norm{\kappa'}^2_{L^2([0,T])}\\
	    &\leq 192T\frac{\norm{\Delta K}^4_{L^2(\R^d)}}{\delta^{8}}\|\kappa\|_{L^2([0,T])}^2+192T \norm{\kappa''}^2_{L^2([0,T])}+16T\frac{\norm{\Delta K}^2_{L^2(\R^d)}}{\delta^{4}} \norm{\kappa}^2_{L^2([0,T])}\\
	    &\leq 208T\frac{\norm{\Delta K}^4_{L^2(\R^d)}}{\delta^{8}}\|\kappa\|_{L^2([0,T])}^2+192T \norm{\kappa''}^2_{L^2([0,T])}
	\end{align*}
	where we used the inequality $\delta^2\leq \norm{\Delta K}_{L^2(\R^d)}$ in the last step.
	Inserting this into \eqref{eq:lem:seriesBound}, the claim follows.
    
    \paragraph*{Case $M>1$} We now extend the result to the general case $M>1$. Define the operators $I'=(I'_{k,l})_{k,l=1}^M$ and $I''=(I''_{k,l})_{k,l=1}^M$ by 
        \begin{align*}
            I_{k,l}'f(t) &= \int_0^t\kappa_{k,l}'(t-t')f(t')dt'-\int_t^T\kappa_{l,k}'(t'-t)f(t')dt',\\
            I_{k,l}''f(t) &=\int_0^t\kappa_{k,l}''(t-t')f(t')dt'+\int_t^T\kappa_{l,k}''(t'-t)f(t')dt'.
        \end{align*}
        Using that $\kappa_{k,l}(0)=\kappa_{l,k}(0)$ and $\kappa'_{k,l}(0)=-\kappa'_{l,k}(0)$, we have
        \begin{align*}
            (I_{k,l}f_l)'(t) = I_{k,l}'f_l(t),\quad (I_{k,l}f_l)''(t) = I_{k,l}''f_l(t),
        \end{align*}
        as can be seen by proceeding similarly as in the case $M=1$. Hence, we get 
        \begin{align*}
            (I(f_k)_{k=1}^M)'=I'(f_k)_{k=1}^M\quad\text{ and }\quad(I(f_k)_{k=1}^M)''=I''(f_k)_{k=1}^M.
        \end{align*}
        Thus, Corollary~\ref{cor:upper:bound:RKHS:norm:M>1_Laplace} again yields for all $j\geq 1$
        \begin{align*}
           \norm{I (u_{j,k})_{k=1}^M}^2_{H_{\vartheta^0,\delta}} 
             &  \leq 4\delta^{-4} \norm{\Delta K}^2_{L^2(\R^d)} \norm{I (u_{j,k})_{k=1}^M}^2_{L^2([0,T])^M} + 2\norm{I' (u_{j,k})_{k=1}^M}^2_{L^2([0,T])^M}.
        \end{align*}
        Next, by construction, we have $(I_{k,l})^*=I_{l,k}$ and $(I'_{k,l})^*=-I'_{l,k}$, implying that $I$ is symmetric, while $I'$ is anti-symmetric. Combining this with Parseval's identity, we get
        \begin{align*}
            \norm{I (u_{j,k})_{k=1}^M}^2_{H_{\vartheta^0,\delta}}&\leq 4\delta^{-4} \norm{\Delta K}^2_{L^2(\R^d)}\sum_{j'=1}^\infty \sc{(u_{j,k})_{k=1}^M}{I(u_{j',k})_{k=1}^M}^2_{L^2([0,T])^M}\\
             & +2\sum_{j'=1}^\infty \sc{(u_{j,k})_{k=1}^M}{I'(u_{j',k})_{k=1}^M}^2_{L^2([0,T])^M}.
        \end{align*}
        Multiplying this with $\sigma_{j}^{-2}$ and summing over $j$ yields
        \begin{align*}
            &\sum_{j=1}^\infty \sigma_j^{-2}\norm{I (u_{j,k})_{k=1}^M}^2_{H_{\vartheta^0,\delta}}\\
            &\leq \sum_{j'=1}^\infty\big(4\delta^{-4} \norm{\Delta K}^2_{L^2(\R^d)} \norm{I(u_{j',k})_{k=1}^M}^2_{H_{\vartheta^0,\delta}} + 2\norm{I'(u_{j',k})_{k=1}^M}^2_{H_{\vartheta^0,\delta}}\big).
        \end{align*}
        Applying again Theorem \ref{thm:upper:bound:RKHS:norm:M>1}, we arrive at
         \begin{align*}
            \sum_{j=1}^\infty \sigma_j^{-2}\norm{I (u_{j,k})_{k=1}^M}^2_{H_{\vartheta^0,\delta}}&\leq 16\frac{\norm{\Delta K}^4_{L^2(\R^d)}}{\delta^{8}} \norm{I}^2_{\operatorname{HS}(L^2([0,T])^M)}\\
            &+ 16\frac{\norm{\Delta K}^2_{L^2(\R^d)}}{\delta^{4}} \norm{I'}^2_{\operatorname{HS}(L^2([0,T])^M)} + 4\norm{I''}^2_{\operatorname{HS}(L^2([0,T])^M)}.
        \end{align*}
        Inserting
        \begin{align*}
	    \norm{I^{(j)}}^2_{\operatorname{HS}(L^2([0,T])^M)}=\sum_{k,l=1}^M\norm{I_{k,l}^{(j)}}_{\operatorname{HS}(L^2([0,T]))}^2,\qquad j=0,1,2,
	    \end{align*}
	    with
	    \begin{align*}
	    \norm{I_{k,k}^{(j)}}_{\operatorname{HS}(L^2([0,T]))}^2=\int_0^T\int_0^T (\kappa_{k,k}^{(j)}(|t-t'|))^2dtdt' \leq 2T\norm{\kappa_{k,k}^{(j)}}^2_{L^2([0,T])}
	    \end{align*}
        for all $k=1,\dots,M$ and
	    \begin{align*}
	    &\norm{I_{k,l}^{(j)}}_{\operatorname{HS}(L^2([0,T]))}^2+\norm{I_{l,k}^{(j)}}_{\operatorname{HS}(L^2([0,T]))}^2\\
	    &=\int_{0}^T\int_0^t (\kappa_{k,l}^{(j)}(t-t'))^2\,dt'dt+\int_{0}^T\int_t^T (\kappa_{l,k}^{(j)}(t'-t))^2\,dt'dt\\
	    &+\int_{0}^T\int_0^t (\kappa_{l,k}^{(j)}(t-t'))^2\,dt'dt+\int_{0}^T\int_t^T (\kappa_{k,l}^{(j)}(t'-t))^2\,dt'dt\\
	    &=\int_{0}^T\int_0^T (\kappa_{k,l}^{(j)}(|t-t'|))^2\,dt'dt+\int_{0}^T\int_0^T (\kappa_{l,k}^{(j)}(|t'-t|))^2\,dt'dt\\
	    &\leq 2T\norm{\kappa_{k,l}^{(j)}}^2_{L^2([0,T])}+2T\norm{\kappa_{l,k}^{(j)}}^2_{L^2([0,T])}
	    \end{align*}
	    for all $1\leq k\neq l\leq M$ (here, we used $I_{l,k}^{(0)}=I_{l,k}$, $I_{l,k}^{(1)}=I_{l,k}'$ and $I_{l,k}^{(2)}=I_{l,k}''$ and similar notation for the derivatives of $\kappa$), we arrive at
        \begin{align*}
            \sum_{j=1}^\infty \frac{\norm{I u_{j}}^2_{H_{\vartheta^0,\delta}}}{\sigma_j^{2}}&\leq  32T\frac{\norm{\Delta K}^4_{L^2(\R^d)}}{\delta^{8}}\sum_{k,l=1}^M\norm{\kappa_{k,l}}^2_{L^2([0,T])}\\
            &+32T\frac{\norm{\Delta K}^2_{L^2(\R^d)}}{\delta^{4}}\sum_{k,l=1}^M\norm{\kappa_{k,l}'}^2_{L^2([0,T])}+8T\sum_{k,l=1}^M\norm{\kappa_{k,l}''}^2_{L^2([0,T])}
        \end{align*}
        for all $\delta^2\leq \norm{\Delta K}_{L^2(\R^d)}$ and all $T\geq 1$. The claim now follows as in the case $M=1$ by an interpolation argument.
        \end{proof}
        
        \begin{proof}[Proof of Lemma \ref{lem:seriesBound:M>1}]
        We apply Lemma \ref{lem:seriesBound:M>1:version} to $I=C_{\vartheta^0,\delta}-C_{\vartheta^1,\delta}:\mathcal{Z}\rightarrow \mathcal{Z}$. This means that  $I=(I_{k,l})_{k,l=1}^M$ with $I_{k,l}f(t) = \int_0^t \kappa_{k,l}(t'-t)f(t')dt'+\int_t^T \kappa_{l,k}(t-t')f(t')dt'$ for the integral kernels $\kappa_{k,l}(t)=c_{\vartheta^0,\delta,k,l}(t)-c_{\vartheta^1,\delta,k,l}(t)$, $0\leq t\leq T$,  with 
        \begin{align}
    	c_{\theta,\delta,k,l}(t)= \E_{\theta} [X_{\delta,k}(t)X_{\delta,l}(0)] = \int_{0}^\infty \sc{S_{\theta}^*(t+t')K_{\delta,x_k}}{S_{\theta}^*(t')K_{\delta,x_l}}dt',\label{eq:cov_kernel}
        \end{align}
        where the last equality follows from Lemma \ref{lem:covFun}(ii). From \eqref{eq:cov_kernel}, we immediately infer $\kappa_{k,l}(0)=\kappa_{l,k}(0)$. The first two derivatives of the cross-covariance integral kernels for $\theta\in\{\vartheta^0,\vartheta^1\}$ are
    \begin{align}
        c'_{\theta,\delta,k,l}(t)&=\int_0^\infty\sc{A^*_\theta S^*_\theta(t+t')K_{\delta,x_k}}{S^*_\theta(t')K_{\delta,x_l}}dt',\label{eq:c_prime}\\
        c''_{\theta,\delta,k,l}(t)&=\int_0^\infty\sc{(A^*_\theta)^2S^*_\theta(t+t') K_{\delta,x_k}}{S^*_\theta(t')K_{\delta,x_l}}dt'.\label{eq:c_double_prime}
    \end{align}
    We note that
     \begin{align*}
         &c'_{\theta,\delta,k,l}(0)+c'_{\theta,\delta,l,k}(0)\\
         &\quad=\int_0^\infty(d/dt')\sc{S^*_\theta(t') K_{\delta,x_k}}{S^*_\theta(t')K_{\delta,x_l}}dt'=-\sc{K_{\delta,x_k}}{K_{\delta,x_l}}
     \end{align*}
    is independent of $\theta$, and hence, $\kappa'_{k,l}(0)+\kappa'_{l,k}(0)=0$.
    \end{proof}
    
    \subsection{Proof of Lemma \ref{lem:concrete_lower_bound}}\label{proof:lem:concrete_lower_bound}
    It is sufficient to upper bound the $L^2$-norms of the $\kappa_{k,l}$. Indeed, the proof below for this remains valid if $K_{\delta,x_k}$ is replaced by $\delta^{-4}(A_{\theta,\delta,x_k}^2K)_{\delta,x_k}$ and so it yields also the wanted bound on the $L^2$-norm of $\kappa_{k,l}''$, cf.~\eqref{eq:c_double_prime}. As in \eqref{eq:diagonalizable} we have $$S^*_{\vartheta^0}(t)=e^{t\Delta},\quad S^*_{\vartheta^1}(t)= U_{\vartheta^1}^{-1}e^{t\theta_1\Delta}U_{\vartheta^1}e^{t\tilde{c}_{\vartheta^1}}$$ with $U_{\vartheta^1}(x)= e^{-(2\theta_1)^{-1}\theta_2b\cdot x}$ and  $\tilde{c}_{\vartheta^1} = \theta_3 - (4\theta_1)^{-1}\theta^2_2\leq 0$. From Lemma \ref{rescaledsemigroup}, we also have  $$S_{\vartheta^0,\delta,x_k}(t)=e^{t\Delta_{\delta,x_k}},\quad S^*_{\vartheta^1,\delta,x_k}(t)=U_{\vartheta^1,\delta,x_k}^{-1}e^{t\theta_1\Delta_{\delta,x_k}}U_{\vartheta^1,\delta,x_k}e^{t\delta^2\tilde{c}_{\vartheta^1}}$$ with $U_{\vartheta^1,\delta,x_k}(x)=U_{\vartheta^1}(x_k+\delta x)$. Note that $$e^{t\theta_1\Delta}=U_{\vartheta^1}(x_k)^{-1}e^{t\theta_1\Delta}U_{\vartheta^1}(x_k).$$ 
    
    To get started, let $1\leq k,l\leq M$ and decompose $\kappa_{k,l}=\sum_{j=1}^6\kappa_{k,l}^{(j)}$ with
    \begin{align*}
        \kappa_{k,l}^{(1)}(t) &= \int_0^\infty\sc{(e^{(t+t')\Delta}-e^{(t+t')\theta_1\Delta}) K_{\delta,x_k}}{e^{t'\Delta}K_{\delta,x_l}}dt',\\ 
        \kappa_{k,l}^{(2)}(t) &= \int_0^\infty\sc{e^{(t+t')\theta_1\Delta} K_{\delta,x_k}}{(e^{t'\Delta}-e^{t'\theta_1\Delta})K_{\delta,x_l}}dt',\\ 
        \kappa_{k,l}^{(3)}(t) &= \int_0^\infty\sc{U_{\vartheta^1}(x_k)^{-1}e^{(t+t')\theta_1\Delta}(U_{\vartheta^1}(x_k)-U_{\vartheta^1}e^{\tilde{c}_{\vartheta^1}(t+t')})K_{\delta,x_k}}{e^{t'\theta_1\Delta}K_{\delta,x_l}}dt',\\ 
        \kappa_{k,l}^{(4)}(t) &= \int_0^\infty\sc{(U_{\vartheta^1}(x_k)^{-1}-U_{\vartheta^1}^{-1})U_{\theta^1}S^*_{\theta^1}(t+t')K_{\delta,x_k}}{e^{t'\theta_1\Delta}K_{\delta,x_l}}dt',\\
        \kappa_{k,l}^{(5)}(t) &= \int_0^\infty\sc{S^*_{\vartheta^1}(t+t')K_{\delta,x_k}}{U_{\vartheta^1}(x_k)^{-1}e^{t'\theta_1\Delta}(U_{\vartheta^1}(x_k)-U_{\vartheta^1}e^{\tilde{c}_{\vartheta^1}t'})K_{\delta,x_l}}dt',\\
         \kappa_{k,l}^{(6)}(t) &= \int_0^\infty\sc{S^*_{\vartheta^1}(t+t')K_{\delta,x_k}}{(U_{\vartheta^1}(x_k)^{-1}-U_{\vartheta^1}^{-1})e^{t'\theta_1\Delta}U_{\vartheta^1}e^{\tilde{c}_{\vartheta^1}t'}K_{\delta,x_l}}dt'.
    \end{align*}
    We only show $\sum_{1\leq k, l\leq M}\norm{\kappa_{k,l}^{(j)}}^2_{L^2([0,T])}\leq c\delta^{8}M(\delta^{-2}(1-\theta_1)^2 + \theta_2^2 + \delta^2 \theta_3^2)$ for $j=1,3,4$. The proof that the same bound holds for $j=2,5,6$ follows from similar arguments and is therefore skipped. Diagonal (i.e., $k=l$) and off-diagonal (i.e., $k\neq l$) terms are treated separately. Set $K_{k,l}=K(\cdot+\delta^{-1}(x_k-x_l))$.
    \paragraph*{Case $j=1$}
     The scaling in Lemma \ref{rescaledsemigroup} and changing variables yield
    \begin{align*}
	    \kappa_{k,l}^{(1)}(t\delta^2)
	        &= \delta^2 \int_0^\infty\sc{(e^{(t+t')\Delta_{\delta,x_k}}-e^{(t+t')\theta_1\Delta_{\delta,x_k}})K}{e^{t'\Delta_{\delta,x_k}}K_{k,l}}_{L^2(\Lambda_{\delta,x_k})}dt'\\
	        &= \delta^2 \sc{\int_0^\infty(e^{(t+2t')\Delta_{\delta,x_k}}-e^{(t\theta_1+(t'(1+\theta_1))\Delta_{\delta,x_k}})dt'\,K}{K_{k,l}}_{L^2(\Lambda_{\delta,x_k})}\\
	        &= \frac{\delta^2}{2}\sc{(e^{t\Delta_{\delta,x_k}}-2(1+\theta_1)^{-1}e^{t\theta_1\Delta_{\delta,x_k}})(-\Delta_{\delta,x_k})^{-1}K}{K_{k,l}}_{L^2(\Lambda_{\delta,x_k})}\\
	        &= \frac{\delta^2}{2}\sc{(e^{t\Delta_{\delta,x_k}}-e^{t\theta_1\Delta_{\delta,x_k}})(-\Delta_{\delta,x_k})^{-1}K}{K_{k,l}}_{L^2(\Lambda_{\delta,x_k})}\\
	        &\quad - \frac{\delta^2(1-\theta_1)}{2(1+\theta_1)}\sc{e^{t\theta_1\Delta_{\delta,x_k}}(-\Delta_{\delta,x_k})^{-1}K}{K_{k,l}}_{L^2(\Lambda_{\delta,x_k})}.
	\end{align*}
	With $$e^{t\theta_1\Delta_{\delta,x_k}}-e^{t\Delta_{\delta,x_k}}=(1-\theta_1)\int_0^t e^{s(\theta_1-1)\Delta_{\delta,x_k}}ds\, e^{t\Delta_{\delta,x_k}}(-\Delta_{\delta,x_k}),$$ as follows from the variation of parameters formula, see \cite[p.~162]{EngNag00}, the identity $e^{t\Delta_{\delta,x_k}}=e^{(t/2)\Delta_{\delta,x_k}}e^{(t/2)\Delta_{\delta,x_k}}$ and the Cauchy-Schwarz inequality, we get
	\begin{align}
	    & \left|\sc{(e^{t\Delta_{\delta,x_k}}-e^{t\theta_1\Delta_{\delta,x_k}})(-\Delta_{\delta,x_k})^{-1}K}{K_{k,l}}_{L^2(\Lambda_{\delta,x_k})}\right|\nonumber\\
	    &= |1-\theta_1| \left|\int_0^t\sc{ e^{s(\theta_1-1)\Delta_{\delta,x_k}}e^{(t/2)\Delta_{\delta,x_k}}K}{e^{(t/2)\Delta_{\delta,x_k}}K_{k,l}}_{L^2(\Lambda_{\delta,x_k})}ds\right|\label{eq:kappa_1}\\
	    &\lesssim |1-\theta_1|t \norm{e^{(t/2)\Delta_{\delta,x_k}}K}_{L^2(\Lambda_{\delta,x_k})}\norm{e^{(t/2)\Delta_{\delta,x_k}}K_{k,l}}_{L^2(\Lambda_{\delta,x_k})}.\nonumber
	\end{align}
	In the same way, and using $K=\Delta^2 \tilde{K}$, \begin{align}
	   &\left|\sc{e^{t\theta_1\Delta_{\delta,x_k}}(-\Delta_{\delta,x_k})^{-1}K}{K_{k,l}}_{L^2(\Lambda_{\delta,x_k})}\right|\label{eq:kappa_2} \\
	    &\quad \leq  \norm{e^{(t/2)\theta_1\Delta_{\delta,x_k}}\Delta\tilde{K}}_{L^2(\Lambda_{\delta,x_k})}\norm{e^{(t/2)\theta_1\Delta_{\delta,x_k}}K_{k,l}}_{L^2(\Lambda_{\delta,x_k})}.\nonumber
	\end{align}
	Lemma \ref{boundS*u} therefore gives $\kappa_{k,l}^{(1)}(t\delta^2)\lesssim \delta^2 |1-\theta_1| (1\wedge t^{-1-d/2+\varepsilon})$ for any $\varepsilon>0$. Changing variables one more time and recalling that $\theta_1\geq 1$ already proves for the sum of diagonal terms  that $\sum_{1\leq k\leq M}\norm{\kappa_{k,k}^{(1)}}^2_{L^2([0,T])}\lesssim \delta^{6}(1-\theta_1)^2M$, and the implied constant depends only on $K$. 
	
	With respect to the off-diagonal terms, by exploring the different supports of $K$ and $K(\cdot+\delta^{-1}(x_k-x_l))$, we can obtain a second bound for $\kappa_{k,l}^{(1)}$. First, Lemma \ref{lem:semigroupProp} gives 
    \begin{align}
        &\sup_{y\in \operatorname{supp}K}\left|(e^{t\Delta_{\delta,x_k}}\Delta^2 \tilde{K}_{k,l})(y)\right| \leq \norm{e^{t\Delta_{\delta,x_k}}\Delta^2 \tilde{K}_{k,l}}_{L^\infty(\Lambda_{\delta,x_k})}\nonumber\\
        &\quad \lesssim \norm{\Delta^2 \tilde{K}_{k,l}}_{L^\infty(\R^d)}\wedge (t^{-2}\norm{\tilde{K}_{k,l}}_{L^\infty(\R^d)})\lesssim 1\wedge t^{-2},\label{eq:kappa_7}
    \end{align}
    while on the other hand Lemma \ref{FeynmanKac}(i) shows
    \begin{align}
	        &\sup_{y\in \operatorname{supp}K}|(e^{t\Delta_{\delta,x_k}} K_{k,l})(y)|\lesssim \sup_{y\in \operatorname{supp}K}|(e^{t\Delta_{0}} |K_{k,l}|)(y)|\nonumber\\
	        &= \sup_{y\in \operatorname{supp}K}\int_{\R^d}(4\pi t)^{-d/2}\exp(-|x-y|^2/(4t))| K_{k,l}(x)|\,dx\nonumber\\
	    &\leq (4\pi t)^{-d/2}e^{-c'\frac{|x_k-x_l|^2}{\delta^2t}}\| K\|_{L^1(\R^d)}
	    \lesssim t^{-d/2}e^{-c'\frac{|x_k-x_l|^2}{\delta^2 t}},\label{eq:kappa_3}
	 \end{align}
	for some $c'>0$. By applying the Hölder inequality and using the results from the last two displays we thus obtain for \eqref{eq:kappa_1} the upper bound
	\begin{align*}
	        &(\theta_1-1)t \norm{K}_{L^1(\R)}  \sup_{0\leq s\leq t,y\in \operatorname{supp}K}|(e^{(t+s(\theta_1-1))\Delta_{\delta,x_k}} K_{k,l})(y)|^{1/2+1/2}\\
	        &\lesssim (\theta_1-1)  \sup_{y\in \operatorname{supp}K}|(e^{t\Delta_{0}} |K_{k,l}|)(y)|^{1/2}\lesssim (\theta_1-1) t^{-d/4}e^{-(c'/2)\frac{|x_k-x_l|^2}{\delta^2 t}}.
	 \end{align*}
	The same upper bound (up to the factor $\theta_1-1$ and with $c'$ instead of $c'/2$) holds in \eqref{eq:kappa_2}. Hence, together with the bound $\kappa_{k,l}^{(1)}(t\delta^2)\lesssim \delta^2 (\theta_1-1) t^{-(1+d/2-\epsilon d/4)(1-\epsilon)^{-1}})$ from above (for sufficiently small $\epsilon$) we get
	\begin{align}
	    |\kappa^{(1)}_{k,l}(t\delta^2)|&\lesssim \delta^2 (\theta_1-1)\min\Big(t^{-(1+d/2-\epsilon d/4)(1-\epsilon)^{-1}}, t^{-d/4}e^{-(c'/2)\frac{|x_k-x_l|^2}{\delta^2 t}}\Big)\nonumber\\
	    &\leq \delta^2 (\theta_1-1)t^{-1-d/2}e^{-\epsilon'\frac{|x_k-x_l|^2}{\delta^2 t}}\label{eq:kappa_8}
	\end{align}
	for $\epsilon'=c'\epsilon/2$,
	where we have used the inequality $\min(a,b)\leq a^{1-\epsilon}b^\epsilon$ valid for $a,b\geq 0$. Applying the bound $$\int_0^{\infty}t^{-p-1}e^{-a/t}dt=a^{-p}\int_0^{\infty}t^{-p-1}e^{-1/t}dt\lesssim a^{-p}$$ to $p=1+d>0$ and $a=2\epsilon'\delta^{-2}|x_k-x_l|^2$ this means
	\begin{align}
	    \int_0^T \kappa_{k,l}^{(1)}(t)^2dt &\lesssim \delta^6(1-\theta_1)^2 \int_0^{\infty} t^{-2-d}e^{-2\epsilon'\frac{|x_k-x_l|^2}{\delta^2 t}}dt\nonumber\\
	    &\lesssim \delta^6(1-\theta_1)^2\frac{\delta^{2+2d}}{|x_k-x_l|^{2+2d}}.\label{eq:kappa_9}
	\end{align}
	Recalling that the $x_k$ are $\delta$-separated we obtain from Lemma \ref{lem:sum:inverse:packing} below that
	\begin{align*} 
	    &\sum_{1\leq k\neq l\leq M}\norm{\kappa_{k,l}^{(1)}}^2_{L^2([0,T])}
	    \lesssim \delta^{6}(1-\theta_1)^2\sum_{k=1}^M\sum_{l=1,l\neq k}^M \frac{\delta^{2+2d}}{|x_k-x_l|^{2+2d}}\lesssim \delta^{6} (1-\theta_1)^2 M.
	\end{align*}
 Together with the bounds for the diagonal terms this yields in all $\sum_{1\leq k, l\leq M}\norm{\kappa_{k,l}^{(1)}}^2_{L^2([0,T])}\leq c\delta^{6}M(1-\theta_1)^2$ for a constant $c$ depending only on~$K$. 
 
 \paragraph*{Case $j=3$}
 We begin again with the scaling from Lemma \ref{rescaledsemigroup} and changing variables such that with the multiplication operators $V_{t,t',\delta}(x)=1-e^{\tilde{c}_{\vartheta^1}\delta^2(t+t')-(2\theta_1)^{-1}\theta_2\delta b\cdot   x}$
    \begin{align}
        &\kappa_{k,l}^{(3)}(t\delta^2) = \delta^2\int_0^\infty\sc{e^{(t+t')\theta_1\Delta_{\delta,x_k}}V_{t,t',\delta}K}{e^{t'\theta_1\Delta_{\delta,x_k}}K_{k,l}}_{L^2(\Lambda_{\delta,x_k})}dt'\nonumber\\
        &\quad= \delta^2\int_0^\infty\sc{e^{(t/2+t')\theta_1\Delta_{\delta,x_k}}V_{t,t',\delta}K}{e^{(t/2+t')\theta_1\Delta_{\delta,x_k}}K_{k,l}}_{L^2(\Lambda_{\delta,x_k})}dt'.\label{eq:kappa_4}
    \end{align}
    Since $K$ is compactly supported and $\tilde{c}_{\theta^1}\leq 0$, $V_{t,t',\delta}$ can be extended to smooth multiplication operators with operator norms bounded by $v_{t,t',\delta}=-\tilde{c}_{\vartheta^1}\delta^2(t+t')+(2\theta_1)^{-1}\delta \theta_2$. Recalling $K=\Delta^2\tilde{K}$,  Lemma \ref{boundS*u} gives for any $\epsilon>0$
    \begin{align}
        & |\kappa_{k,l}^{(3)}(t\delta^2)|\leq  \delta^2 \int_0^\infty \norm{e^{(t/2+t')\theta_1\Delta_{\delta,x_k}}V_{t,t',\delta}K}_{L^2(\Lambda_{\delta,x_k})}\norm{e^{(t/2+t')\theta_1\Delta_{\delta,x_k}}K_{k,l}}_{L^2(\Lambda_{\delta,x_k})}dt'\nonumber\\
        & \quad \leq  \delta^2  \int_0^\infty v_{t,t',\delta}(1\wedge (t+t')^{-4-d/2+\epsilon})dt'\lesssim \delta^3 (-\tilde{c}_{\vartheta^1}\delta+ (2\theta_1)^{-1}\theta_2)(1\wedge t^{-1-d/2})\nonumber\\
        & \quad  \leq \delta^3 (\delta |\theta_3| + \theta_2)(1\wedge t^{-1-d/2}),\label{eq:kappa_5}
    \end{align}
    recalling in the last line that $\theta_1\geq 1$ and $\theta_2\leq 1$.     Changing variables therefore proves for the sum of diagonal terms $\sum_{1\leq k\leq M}\norm{\kappa_{k,k}^{(3)}}^2_{L^2([0,T])}\lesssim \delta^8 (\delta |\theta_3| + \theta_2)^2 M$.
    
    With respect to the off-diagonal terms we have 
    \begin{align*}
        &\kappa_{k,l}^{(3)}(t\delta^2) = \delta^2\int_0^\infty\sc{V_{t,t',\delta}K}{e^{(t+2t')\theta_1\Delta_{\delta,x_k}}K_{k,l}}_{L^2(\Lambda_{\delta,x_k})}dt'.
    \end{align*}
    Write $K=\Delta\bar{K}$ for some compactly supported $\bar{K}$ and note that
    \begin{align*}
        V_{t,t',\delta}K = V_{t,t',\delta}\Delta\bar{K} = \Delta (V_{t,t',\delta}\bar{K})-(\Delta V_{t,t',\delta})\bar{K} - 2\nabla V_{t,t',\delta}\cdot \nabla \bar{K}.
    \end{align*}
    Similar to \eqref{eq:kappa_7} we find from Lemma \ref{lem:semigroupProp} 
    \begin{align*}
        &\sup_{y\in \operatorname{supp}\bar{K}}\left|(\Delta e^{t\Delta_{\delta,x_k}} K_{k,l})(y)\right| \lesssim \norm{\Delta K_{k,l}}_{L^\infty(\R^d)}\wedge (t^{-3}\norm{\tilde{K}_{k,l}}_{L^\infty(\R^d)})\lesssim 1\wedge t^{-3}.
    \end{align*}
    Together with the Hölder inequality and \eqref{eq:kappa_3} this provides us for sufficiently small $\epsilon>0$ with
    \begin{align*}
        &\delta^2\left|\int_0^\infty\sc{\Delta(V_{t,t',\delta}\bar{K})}{e^{(t+2t')\theta_1\Delta_{\delta,x_k}}K_{k,l}}_{L^2(\Lambda_{\delta,x_k})}dt'\right|\\
        &=\delta^2\left|\int_0^\infty\sc{V_{t,t',\delta}\bar{K}}{\Delta e^{(t+2t')\theta_1\Delta_{\delta,x_k}}K_{k,l}}_{L^2(\Lambda_{\delta,x_k})}dt'\right|\\
        & \lesssim \delta^2 \int_0^\infty v_{t,t',\delta}\norm{\bar{K}}_{L^1(\R^d)}\sup_{y\in \operatorname{supp}K}\left|(\Delta e^{(t+2t')\theta_1\Delta_{\delta,x_k}} K_{k,l})(y)\right|dt'\\
        &\lesssim \delta^2 \int_0^\infty v_{t,t',\delta}(1\wedge(t+2t'))^{-3(1-\varepsilon)}\sup_{y\in \operatorname{supp}K}\left|(e^{(t+2t')\theta_1\Delta_{\delta,x_k}}\Delta K_{k,l})(y)\right|^{\epsilon}dt'\\
        & \lesssim \delta^3 (\delta |\theta_3| +\theta_2) e^{-\epsilon c'\frac{|x_k-x_l|^2}{\delta^2 t}} \int_0^{\infty} (1\wedge (t')^{-1-\epsilon d/2})dt'
        \lesssim \delta^3 (\delta |\theta_3| + \theta_2) e^{-\epsilon c'\frac{|x_k-x_l|^2}{\delta^2 t}}.
    \end{align*}
    Next, using $\theta_2\leq 1$, we have $\norm{\Delta V_{t,t',\delta}}_{L^{\infty}(\Lambda_{\delta,x_k})}+\norm{|\nabla V_{t,t',\delta}|}_{L^{\infty}(\Lambda_{\delta,x_k})}\lesssim\delta\theta_2$, and so analogously to the computations in the last display
    \begin{align*}
        &\delta^2\left|\int_0^\infty\sc{(\Delta V_{t,t',\delta})\bar{K}+2\nabla V_{t,t',\delta}\cdot\nabla \bar{K}}{e^{(t+2t')\theta_1\Delta_{\delta,x_k}}K_{k,l}}_{L^2(\Lambda_{\delta,x_k})}dt'\right|\\
        & \lesssim \delta^3 \theta_2 \int_0^\infty (\norm{\bar{K}}_{L^1(\R^d)}+\norm{|\nabla\bar{K}|}_{L^1(\R^d)})\sup_{y\in \operatorname{supp}K}\left|( e^{(t+2t')\theta_1\Delta_{\delta,x_k}} K_{k,l})(y)\right|dt'\\
        &\lesssim \delta^3 \theta_2 e^{-\epsilon c'\frac{|x_k-x_l|^2}{\delta^2 t}}.
    \end{align*}
    In all, this means $|\kappa_{k,l}^{(3)}(t\delta^2)| \lesssim \delta^3 (\delta |\theta_3| + \theta_2) e^{-\epsilon c'\frac{|x_k-x_l|^2}{\delta^2 t}}$. Arguing as for \eqref{eq:kappa_8} and \eqref{eq:kappa_9}, as well as using  \eqref{eq:kappa_5} we conclude that $|\kappa^{(3)}_{k,l}(t\delta^2)|\lesssim \delta^3 (\delta |\theta_3| + \theta_2) t^{-1/2-d/2}e^{-\epsilon' \frac{|x_k-x_l|^2}{\delta^2 t}}$ for some $\epsilon'>0$ and $$\int_0^T \kappa_{k,l}^{(3)}(t)^2dt \lesssim \frac{\delta^{8+2d}(\delta |\theta_3| + \theta_2)^2}{|x_k-x_l|^{2d}}.$$ 
    We thus get for diagonal and off-diagonal terms that  $\sum_{1\leq k, l\leq M}\norm{\kappa_{k,l}^{(3)}}^2_{L^2([0,T])}\leq c\delta^8 (\delta |\theta_3| +  \theta_2)^2 M$ for a constant $c$ depending only on $K$.
	
	\paragraph*{Case $j=4$}
	As in the previous cases we have
	\begin{align*}
	    \kappa_{k,l}^{(4)}(t\delta^2) = \delta^2 \int_0^\infty \sc{(e^{-\delta (\theta_2/\theta_1)b\cdot x}-1)S^*_{\vartheta^1,\delta,x_k}(t+t')K}{e^{t'\theta_1\Delta_{\delta,x_k}}K_{k,l}}_{L^2(\Lambda_{\delta,x_k})}dt'.
	\end{align*}
	Using the Cauchy-Schwarz inequality, Lemma \ref{FeynmanKac}(i) and Lemma \ref{boundS*u} with $K=\Delta^2\tilde{K}$ we get for any $\epsilon>0$, and recalling that $\theta_1\geq 1$,
	\begin{align}
	    \sc{(e^{-\delta (\theta_2/\theta_1)b\cdot x}-1)S^*_{\vartheta^1,\delta,x_k}(t+t')K}{e^{t'\theta_1\Delta_{\delta,x_k}}K_{k,l}}_{L^2(\Lambda_{\delta,x_k})}\nonumber\\
	    \lesssim \delta \theta_2(1\wedge (t+t')^{-2-d/4+\epsilon})\norm{|x|e^{t'\theta_1\Delta_{0}}|K_{k,l}|}_{L^2(\R^d)}.\label{eq:kappa_11}
	\end{align}
    Note that $K_{k,l}\in C^1_c(\R^d)$ such that $|K_{k,l}|\in H^{1,\infty}(\R^d)$ and $\nabla |K_{k,l}|\in L^\infty(\R^d)$ with compact support. Using now \cite[Lemma A.2(ii)]{altmeyer_nonparametric_2020} to the extent that 
    \begin{align}
        x(e^{t'\theta_1\Delta_0}|K_{k,l}|)(x) =  (e^{t'\theta_1\Delta_0}(-2t'\theta_1\nabla|K_{k,l}|+x|K_{k,l}|))(x),\label{eq:kappa_10}
    \end{align}
    we find that the $L^2(\R^d)$-norm in \eqref{eq:kappa_11} is uniformly bounded in $t'>0$. Hence, $|\kappa_{k,l}^{(4)}(t\delta^2)|\lesssim \delta^3 \theta_2(1\wedge t^{-1/2-d/4-\epsilon})$ and changing variables shows for the sum of diagonal terms $\sum_{1\leq k\leq M}\norm{\kappa_{k,k}^{(4)}}^2_{L^2([0,T])}\lesssim \delta^{8} \theta_2^2 M$. Regarding the off-diagonal terms we have similarly for some $\bar{K}\in L^\infty(\R^d)$ having compact support
    \begin{align*}
	    &\left|\sc{(e^{-\delta (\theta_2/\theta_1)b\cdot x}-1)S^*_{\vartheta^1,\delta,x_k}(t+t')K}{e^{t'\theta_1\Delta_{\delta,x_k}}K_{k,l}}_{L^2(\Lambda_{\delta,x_k})}\right|\\
	    &=\left|\sc{K}{S_{\vartheta^1,\delta,x_k}(t+t')(e^{-\delta (\theta_2/\theta_1)b\cdot x}-1)e^{t'\theta_1\Delta_{\delta,x_k}}K_{k,l}}_{L^2(\Lambda_{\delta,x_k})}\right|\\
	    &\lesssim \delta \theta_2 \norm{K}_{L^1(\R^d)} \sup_{y\in\operatorname{supp}K} \left|\left(e^{(t+t')\theta_1\Delta_0}|x|e^{t'\theta_1\Delta_0}|K_{k,l}|\right)(y)\right|\\
	     &\lesssim \delta \theta_2 (1\vee t') \sup_{y\in\operatorname{supp}K} \left|\left(e^{(t+2t')\theta_1\Delta_0}|\bar{K}_{k,l}|\right)(y)\right|\\
	     &\lesssim \delta \theta_2 (1\vee t')t^{-d/2}e^{-c'\frac{|x_k-x_l|^2}{\delta^2 t}},
	\end{align*}
	using \eqref{eq:kappa_3}. Arguing as for \eqref{eq:kappa_8} and \eqref{eq:kappa_9} we then find from combining the last display with \eqref{eq:kappa_11} that $|\kappa^{(4)}_{k,l}(t\delta^2)|\lesssim \delta^3 \theta_2 t^{-1/2-d/4-\epsilon'}e^{-\epsilon' \frac{|x_k-x_l|^2}{\delta^2 t}}$ for some $\epsilon'>0$ and $$\int_0^T \kappa_{k,l}^{(4)}(t)^2dt \lesssim \frac{\delta^{8+d+4\epsilon'}\theta_2^2}{|x_k-x_l|^{4\epsilon'+d}},$$ and so in all, for diagonal and off-diagonal terms,  $\sum_{1\leq k, l\leq M}\norm{\kappa_{k,l}^{(4)}}^2_{L^2([0,T])}\leq c\delta^8 \theta_2^2 M$ for a constant $c$ depending only on $K$. \qed
	
	\begin{lemma}
	\label{lem:sum:inverse:packing}
	    Let $x_1,\dots,x_M$ be $\delta$-separated points in $\R^d$, and let $p>d$. Then we have
	    \begin{align*}
	        \sum_{k=2}^M\frac{1}{|x_1-x_k|^p}\leq C\delta^{-p},
	    \end{align*}
        where $C$ is a constant depending only on $d$ and $p$.
	\end{lemma}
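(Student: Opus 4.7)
The plan is to carry out a standard dyadic shell decomposition around $x_1$, combined with a volume-based packing bound that exploits the $\delta$-separation.

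First I would observe that since the points are $\delta$-separated, the open balls $B(x_k,\delta/2)$ are pairwise disjoint. For $j\geq 0$, let $N_j$ denote the number of indices $k\in\{2,\dots,M\}$ with $2^j\delta\leq |x_1-x_k|<2^{j+1}\delta$. If $x_k$ lies in that shell, then $B(x_k,\delta/2)\subset B(x_1,2^{j+1}\delta+\delta/2)\setminus B(x_1,2^j\delta-\delta/2)$, and disjointness together with a comparison of Lebesgue volumes yields $N_j\leq C_d\,2^{jd}$ for a constant $C_d$ depending only on $d$. Moreover, because the $x_k$ are $\delta$-separated, there are no indices with $|x_1-x_k|<\delta$, so only $j\geq 0$ contribute.

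Next I would decompose the sum dyadically:
\begin{align*}
\sum_{k=2}^M \frac{1}{|x_1-x_k|^p}
&=\sum_{j=0}^\infty \sum_{\substack{k:\,2^j\delta\leq |x_1-x_k|<2^{j+1}\delta}} \frac{1}{|x_1-x_k|^p}\\
&\leq \sum_{j=0}^\infty \frac{N_j}{(2^j\delta)^p}
\leq C_d\,\delta^{-p}\sum_{j=0}^\infty 2^{j(d-p)}.
\end{align*}
Since $p>d$, the geometric series converges, giving a constant $C=C(d,p)$ as claimed.

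There is essentially no obstacle here; the only thing to be careful about is the packing count in the shell, which is why one enlarges the outer ball by $\delta/2$ and shrinks the inner one by $\delta/2$ so that every $B(x_k,\delta/2)$ indeed fits inside the enlarged shell. The constant $C$ then only depends on $d$ through the volume ratio and on $p$ through the value of the geometric series $\sum_j 2^{j(d-p)}=(1-2^{d-p})^{-1}$.
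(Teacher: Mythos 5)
Your argument is correct and complete, but it takes a different (equally standard) route from the paper. You decompose the sum over dyadic annuli $2^j\delta\leq|x_1-x_k|<2^{j+1}\delta$, bound the number $N_j$ of points in each annulus by a volume-packing argument using the disjointness of the balls $B(x_k,\delta/2)$, and then sum the resulting geometric series $\sum_{j\geq0}2^{j(d-p)}$, which converges because $p>d$. The paper instead uses an integral-comparison argument: it replaces each summand by its average over $B(x_k,\delta/2)$, using the triangle-inequality bound $|x_1-x_k|^{-1}\leq\tfrac{3}{2}|y-x_1|^{-1}$ for $y\in B(x_k,\delta/2)$, so that disjointness of the balls bounds the whole sum by a constant times $\delta^{-d}\int_{B(x_1,\delta/2)^c}|y-x_1|^{-p}\,dy$, which is then evaluated in polar coordinates. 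Both proofs hinge on the same two ingredients (disjointness of the $\delta/2$-balls and the convergence condition $p>d$); the paper's version is slightly more compact and gives a cleaner explicit constant, while yours yields the reusable intermediate count $N_j\lesssim 2^{jd}$ for points in a dyadic shell. Your handling of the edge cases is also fine: $\delta$-separation excludes all indices with $|x_1-x_k|\leq\delta$ from shells with $j<0$, and enlarging the outer radius and shrinking the inner radius by $\delta/2$ guarantees that each small ball genuinely sits inside the fattened annulus.
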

	
	\begin{proof}
	    Since  $x_1,\dots,x_M$ are $\delta$-separated, the Euclidean balls $B(x_k,\delta/2)=\{y\in\R^d:|y-x_k|\leq \delta/2\}$ around the $x_k$ of radius $\delta/2$ are disjoint. Moreover, for $y\in B(x_k,\delta/2)$ and $k>1$, we have
	    \begin{align*}
	        |y-x_1|\leq |y-x_k|+|x_k-x_1|\leq \frac{\delta}{2}+|x_k-x_1|\leq \frac{3}{2}|x_k-x_1|,
	    \end{align*}
	    implying that
	    \begin{align*}
	        \frac{1}{|x_k-x_1|}\leq \frac{3}{2}\frac{1}{|y-x_1|}.
	    \end{align*}
	    We conclude that
	    \begin{align*}
	        \sum_{k=2}^M\frac{1}{|x_1-x_k|^p}&=\sum_{k=2}^M\frac{1}{\operatorname{vol}(B(x_k,\delta/2))}\int_{B(x_k,\delta/2)}\limits\frac{1}{|x_1-x_k|^p}dy\\
	        &\leq \frac{(3/2)^p}{(\delta/2)^d\operatorname{vol}(B(0,1))}\sum_{k=2}^M\int_{B(x_k,\delta/2)}\limits\frac{1}{|y-x_1|^p}dy\\
	        &\leq  \frac{(3/2)^p}{(\delta/2)^d\operatorname{vol}(B(0,1))}\int_{B(x_1,\delta/2)^c}\limits\frac{1}{|y-x_1|^p}dy\\
	        &=\frac{(3/2)^p}{(\delta/2)^d\operatorname{vol}(B(0,1))}\int_{B(0,\delta/2)^c}\limits\frac{1}{|y|^p}dy.
	    \end{align*}
	    Changing to polar coordinates, we arrive at
	    \begin{align*}
	        \sum_{k=2}^M\frac{1}{|x_1-x_k|^p}&\leq \frac{d(3/2)^p}{(\delta/2)^d}\int_{\delta/2}^\infty\frac{1}{t^p}t^{d-1}dt=\frac{d(3/2)^p}{(\delta/2)^p}\int_1^\infty s^{d-p-1}ds.
	    \end{align*}
	    Since $p>d$, the latter integral is finite, and the claim follows. 
	\end{proof}

   \subsection{Proof of Theorem \ref{thm:lower:bound:M>1:add:measurements}}
   Argue as in the proof of Theorem \ref{thm:lower:bound:M>1}, using slight modifications of Lemmas \ref{lem:seriesBound:M>1} and \ref{lem:concrete_lower_bound}. The key additional ingredient is an appropriate extension of Corollary~\ref{cor:upper:bound:RKHS:norm:M>1_Laplace}. For this, let $A_1,\dots,A_p$ be as in Section \ref{sec:setup}. We assume that 
    \begin{align}\label{eq:independence:ass}
        (A_i^*K)_{i=1}^{p}\text{ are linearly independent in }L^2(\Lambda).
    \end{align}
    Define 
    \begin{align*}
        H=\Big(\Big\langle\frac{A_i^*K}{\norm{A_i^*K}_{L^2(\Lambda)}},\frac{A_j^*K}{\norm{A_j^*K}_{L^2(\Lambda)}}\Big\rangle\Big)_{i,j=1}^p,
    \end{align*}
    and let $\lambda_{\min}=\lambda_{\min}(H)$ be the smallest eigenvalue of $H$. By \eqref{eq:independence:ass}, $H$ is non-singular, meaning that $\lambda_{\min}(H)>0$. Finally, let 
    \begin{align*}
        v=\sum_{i=1}^p\frac{\norm{\Delta A_i^*K}_{L^2(\Lambda)}^2}{\norm{A_i^*K}_{L^2(\Lambda)}^2}.
    \end{align*}
    
    \begin{corollary}\label{cor:lower:bound:RKHS:norm:M>1:mult:measurement}
        Let $(H_ {X_\delta},\|\cdot\|_{X_\delta})$ be the RKHS of the measurements $X_\delta$ with differential operator $A_\theta=\Delta$, where $X_\delta(t)=(\sc{X(t)}{K_{1k}},\dots,\sc{X(t)}{K_{pk}})_{k=1}^M$ and $K_{ik}=A_i^*K_{\delta,x_k}/\norm{A_i^*K_{\delta,x_k}}_{L^2(\Lambda)}$. Then we have $H_ {X_\delta}=(H^{p})^M$ and 
          \begin{align*}
              \|((h_{ip})_{i=1}^p)_{k=1}^{M}\|_{X_\delta}^2&\leq \frac{4vp}{\delta^4\lambda^2_{\min}}\sum_{k=1}^M\sum_{i=1}^p\|h_{ik}\|_{L^2([0,T])}^2+\frac{2}{\lambda^2_{\min}}\sum_{k=1}^M\sum_{i=1}^p\|h_{ik}'\|_{L^2([0,T])}^2
          \end{align*}
         for all $((h_{ip})_{i=1}^p)_{k=1}^{M}\in (H^{p})^M$, $\delta^2\leq \sqrt{v}$ and $T\geq 1$. 
    \end{corollary}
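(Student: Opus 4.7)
The plan is to apply Theorem \ref{thm:upper:bound:RKHS:norm:M>1} to the collection of $pM$ test functions $\{K_{ik}: 1\leq i\leq p,\ 1\leq k\leq M\}$, treated as a single list of measurements for the stochastic convolution with $\mathcal{H}=L^2(\Lambda)$ and $A_\theta=\Delta$. The argument reduces to identifying the two Gram matrices $G$ and $G_{A_\theta}$ appearing in that theorem and exploiting the product structure of the index set.

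For the Gram matrix $G=(\langle K_{ik},K_{jl}\rangle)$, the disjoint-support condition of Assumption \ref{assu:lowerBound}(iii) forces every inner product with $k\neq l$ to vanish, so $G$ is block-diagonal with $M$ identical $p\times p$ blocks. Using the scaling identity $A_i^*K_{\delta,x_k}=\delta^{-n_i}(A_i^*K)_{\delta,x_k}$ from Lemma \ref{rescaledsemigroup}(i), which is clean precisely because $\bar A_i=A_i$, together with the isometry $\|z_{\delta,x}\|_{L^2(\Lambda)}=\|z\|_{L^2(\R^d)}$, each diagonal block equals the normalised Gram matrix $H$ in the statement. Hence $\|G^{-1}\|_{\mathrm{op}}=1/\lambda_{\min}$, and invertibility of $G$ follows from the independence hypothesis \eqref{eq:independence:ass}. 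In particular $H_{X_\delta}=(H^p)^M$ by Theorem \ref{thm:upper:bound:RKHS:norm:M>1}.

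For $G_\Delta=(\langle \Delta K_{ik},\Delta K_{jl}\rangle)$, the same disjoint-support reasoning gives a block-diagonal structure with diagonal blocks $B=(\langle \Delta K_{ik},\Delta K_{jk}\rangle)_{i,j}$. Since each $B$ is a positive semidefinite Gram matrix, its operator norm is bounded by its trace, i.e.\ $\|B\|_{\mathrm{op}}\leq \sum_{i=1}^p\|\Delta K_{ik}\|_{L^2(\Lambda)}^2$. Applying the scaling identity once more, $\|\Delta K_{ik}\|_{L^2(\Lambda)}^2=\delta^{-4}\|\Delta A_i^*K\|_{L^2(\R^d)}^2/\|A_i^*K\|_{L^2(\R^d)}^2$, so $\sum_i\|\Delta K_{ik}\|^2=\delta^{-4}v$, yielding $\|G_\Delta\|_{\mathrm{op}}\leq \delta^{-4}v$.

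Substituting these two bounds into the norm inequality of Theorem \ref{thm:upper:bound:RKHS:norm:M>1} produces the coefficients $3v/(\delta^4\lambda_{\min}^2)+1/\lambda_{\min}$ in front of $\sum_{i,k}\|h_{ik}\|^2$ and $2/\lambda_{\min}$ in front of $\sum_{i,k}\|h_{ik}'\|^2$. Two elementary simplifications finish the proof: since $\operatorname{tr}(H)=p$ the average eigenvalue of $H$ equals $1$, hence $\lambda_{\min}\leq 1$ and $1/\lambda_{\min}\leq 1/\lambda_{\min}^2$; and the hypothesis $\delta^2\leq \sqrt v$ gives $v/\delta^4\geq 1$. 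Combining these yields $1/\lambda_{\min}\leq v/(\delta^4\lambda_{\min}^2)$, so the first coefficient is bounded by $4v/(\delta^4\lambda_{\min}^2)\leq 4vp/(\delta^4\lambda_{\min}^2)$, while the second is bounded by $2/\lambda_{\min}^2$. No substantive obstacle arises; the only care needed is the scaling bookkeeping for $A_i^*K_{\delta,x_k}$, which is straightforward under $\bar A_i=A_i$.
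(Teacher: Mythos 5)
Your proposal is correct and follows essentially the same route as the paper: apply Theorem \ref{thm:upper:bound:RKHS:norm:M>1} to the $pM$ normalised test functions, observe that the disjoint supports make $G$ and $G_\Delta$ block-diagonal with blocks $H$ and $\delta^{-4}H_\Delta$ (via the scaling identity, clean because $\bar A_i=A_i$), bound $\|H_\Delta\|_{\operatorname{op}}$ by its trace $v$, and absorb the extra terms using $\lambda_{\min}\leq 1$ and $v/\delta^4\geq 1$. Your explicit trace argument for $\lambda_{\min}\leq 1$ and for $\|H_\Delta\|_{\operatorname{op}}\leq v$ is a slightly more detailed rendering of what the paper leaves implicit, but the proof is the same.
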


    \begin{proof}[Proof of Corollary \ref{cor:lower:bound:RKHS:norm:M>1:mult:measurement}]
        First, let $M=1$. Additionally to $H$, define 
        \begin{align*}
        H_{\Delta}&=\Big(\Big\langle\frac{\Delta A_i^*K}{\norm{A_i^*K}_{L^2(\Lambda)}},\frac{\Delta A_j^*K}{\norm{A_j^*K}_{L^2(\Lambda)}}\Big\rangle\Big)_{i,j=1}^p.
        \end{align*}
        By the Cauchy-Schwarz inequality, we have $\norm{H_\Delta}_{\operatorname{op}}\leq v$.
        Moreover, we have
        $G=H$ and $G_\Delta=\delta^{-4}H_\Delta$. Inserting these bounds into Theorem \ref{thm:upper:bound:RKHS:norm:M>1}, we obtain that 
        \begin{align*}
              \|(h_i)_{i=1}^p\|_{X_K}^2
              &\leq  \Big(\frac{3v}{\delta^4\lambda_{\min}^2}+\frac{1}{\lambda_{\min}}\Big)\sum_{i=1}^p\norm{h_i}^2_{L^2([0,T])}+\frac{2}{\lambda_{\min}}\sum_{i=1}^p\norm{h_i'}^2_{L^2([0,T])}.
          \end{align*}
          Next, let $M>1$. Then $G$ and $G_\Delta$ are block-diagonal with $M$ equal $p\times p$-blocks all of the above form and we get
          \begin{align*}
              \|((h_i)_{i=1}^p)_{k=1}^M\|_{X_K}^2
              &\leq  \frac{4v}{\delta^4\lambda_{\min}^2}\sum_{i=1}^p\sum_{k=1}^M\norm{h_{ik}}^2_{L^2([0,T])}+\frac{2}{\lambda_{\min}^2}\sum_{i=1}^p\sum_{k=1}^M\norm{h_{ik}'}^2_{L^2([0,T])},
          \end{align*}
          where we also used that $\lambda_{\min}\in(0,1]$ and $\delta^2\leq \sqrt{v}$. 
    \end{proof}

        \subsection{Second proof of Theorem~\ref{thm:upper:bound:RKHS:norm:M>1}}\label{app:RKHS:measurements:approximation:argument}
        
        In this Appendix, we prove Theorem~\ref{thm:upper:bound:RKHS:norm:M>1} under the weaker assumption $K_1,\dots,K_M\in \mathcal{D}(A)$. This is achieved by an additional approximation argument.
        Let $X_m(t)=\sum_{j\leq m}Y_j(t)e_j$, $0\leq t\leq T$, be the projection of $X(t)$ onto $V_m=\operatorname{span}\{e_1,\dots,e_m\}$, and taking values in $L^2([0,T];V_m)$. We start with the following consequence of Lemma \ref{lemma:RKHS_norm_OU_process}.
    
    \begin{lemma}\label{lem:RKHS:projected:SPDE}
        For every $m\geq 1$, the RKHS $(H_{X_m},\|\cdot\|_{X_m})$ of $X_m$ satisfies $$H_{X_m}=\Big\{h=\sum_{j=1}^mh_je_j:h_j\in H, 1\leq j\leq m\Big\}\quad\text{and}\quad \|h\|_{X_m}^2=\sum_{j=1}^m\norm{h_j}_{Y_j}^2.$$ 
        Moreover, we have $\|h\|_{X_m}=\|h\|_{X}$ with the latter norm defined in Theorem \ref{thm:RKHS:SPDE}.
    \end{lemma}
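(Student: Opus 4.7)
The plan is to deduce the lemma directly from the two preceding RKHS results, namely Lemma~\ref{lemma:RKHS_norm_OU_process} for the scalar Ornstein--Uhlenbeck RKHS and Lemma~\ref{lem:RKHS:SPDE} for the series representation of $\|\cdot\|_X$. Since $X_m$ takes values in the finite-dimensional Hilbert space $L^2([0,T];V_m)$, there is no need to introduce the negative Sobolev embedding space $\mathcal{H}_1$, which considerably simplifies the argument compared to Theorem~\ref{thm:RKHS:SPDE}.

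First, I would exploit that the scalar Ornstein--Uhlenbeck processes $Y_1,\dots,Y_m$ from \eqref{eq:series_decomp} are independent, so that $(Y_1,\dots,Y_m)$ is a centred Gaussian random element of $L^2([0,T])^m$ whose covariance operator is block-diagonal, equal to $\bigoplus_{j=1}^m C_{Y_j}$. A standard direct-sum formula for RKHS of independent Gaussian processes (e.g.\ Proposition~4.1 in \cite{MR3024389}), together with Lemma~\ref{lemma:RKHS_norm_OU_process}, then shows that the RKHS of $(Y_1,\dots,Y_m)$ equals $H^m$ with squared norm $\sum_{j=1}^m\|h_j\|_{Y_j}^2$.

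Next, I would consider the bounded linear map $L:L^2([0,T])^m\to L^2([0,T];V_m)$ defined by $L(f_1,\dots,f_m)(t)=\sum_{j=1}^m f_j(t)e_j$. Since $(e_j)_{j=1}^m$ is orthonormal, $L$ is an isometric isomorphism, and $LX_{(m)}=X_m$ in distribution, writing $X_{(m)}=(Y_1,\dots,Y_m)$. Applying the transformation rule for RKHS under bounded linear operators (Proposition~4.1 in \cite{MR3024389}), one obtains $H_{X_m}=L(H^m)=\{\sum_{j=1}^m h_j e_j:h_j\in H\}$, with
\[
\|h\|_{X_m}^2=\inf\Bigl\{\sum_{j=1}^m\|f_j\|_{Y_j}^2: L(f_1,\dots,f_m)=h\Bigr\}=\sum_{j=1}^m\|h_j\|_{Y_j}^2,
\]
the infimum being attained uniquely at $f_j=h_j$ because $L$ is injective. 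This yields both the set-theoretic identity and the norm formula.

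Finally, for the equality $\|h\|_{X_m}=\|h\|_X$ when $h\in H_{X_m}$, I would observe that the series representation of Lemma~\ref{lem:RKHS:SPDE} expands $h$ in the orthonormal basis $(e_j)_{j\geq 1}$ of $\mathcal{H}$. For $h=\sum_{j=1}^m h_j e_j\in H_{X_m}$ the coefficients of index $j>m$ vanish, so $\|h\|_X^2=\sum_{j\geq 1}\|h_j\|_{Y_j}^2=\sum_{j=1}^m\|h_j\|_{Y_j}^2=\|h\|_{X_m}^2$. I do not anticipate any real obstacle here; the only point requiring care is the application of the image-measure RKHS formula, but this is entirely standard because $L$ is an isometric isomorphism between separable Hilbert spaces, and the argument remains at a purely finite-dimensional (in the $j$-index) level.
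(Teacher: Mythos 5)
Your proposal is correct and follows essentially the same route as the paper: reduce to the coefficient vector $(Y_1,\dots,Y_m)$ via the isometric isomorphism $L^2([0,T];V_m)\cong L^2([0,T])^m$, use independence and the block-diagonal covariance operator together with Lemma~\ref{lemma:RKHS_norm_OU_process} to identify the RKHS as $H^m$ with squared norm $\sum_{j=1}^m\|h_j\|_{Y_j}^2$, and then transfer back. The only cosmetic difference is in the final identity $\|h\|_{X_m}=\|h\|_X$, where you invoke the series representation of Lemma~\ref{lem:RKHS:SPDE} while the paper plugs the eigensystem $(\lambda_j,e_j)$ directly into the explicit norm of Theorem~\ref{thm:RKHS:SPDE}; these are equivalent.
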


    \begin{proof}
        Since $L^2([0,T];V_m)$ is isomorphic to $L^2([0,T])^m$, it suffices to compute the RKHS of the coefficient vector $Y=(Y_1,\dots,Y_m)$. Using that $Y_1,\dots,Y_m$ are independent stationary Ornstein-Uhlenbeck processes, the vector $Y$ is a Gaussian process in $L^2([0,T])^m$ with expectation zero and covariance operator $\bigoplus_{j=1}^m C_{Y_j}$ with $C_{Y_j}:L^2([0,T])\rightarrow L^2([0,T])$ being the covariance operator of $Y_j$. Combining this with \eqref{eq:RKHS:Hilbert:space} and Lemma \ref{lemma:RKHS_norm_OU_process}, we conclude that the RKHS of $Y$ is equal to $H^m$ with norm $\sum_{j=1}^m\norm{h_j}_{Y_j}^2$. Translating this back to $X_m$, the first claim follows. The second one follows from $(\lambda_j,e_j)_{j=1}^\infty$ being an eigensystem of $-A$.
    \end{proof}
    
    \begin{proof}[Proof of Theorem \ref{thm:upper:bound:RKHS:norm:M>1}]
        The first step will be to compute the RKHS $(H_{X_{K,m}},\|\cdot\|_{X_{K,m}})$ of $X_{K,m}=(\sc{X_m}{K_k}_{\mathcal{H}})_{k=1}^M$. To this end define the bounded linear map
        \begin{align*}
            L:L^2([0,T];V_m)\rightarrow L^2([0,T])^M,f\mapsto (\sc{f}{K_k}_{\mathcal{H}})_{k=1}^M.
        \end{align*}
        Combining the fact that $LX_m=X_{K,m}$ in distribution with Proposition 4.1 in \cite{MR3024389} and Lemma \ref{lem:RKHS:projected:SPDE}, we obtain that $H_{X_{K,m}}=L(\{h:h=\sum_{j=1}^mh_je_j:h_j\in H\})$. This implies $H_{X_{K,m}}\subseteq H^M$. To see the reverse inclusion, let $P_m$ be the orthogonal projection of $L^2(\Lambda)$ onto $V_m=\operatorname{span}\{e_1,\dots,e_m\}$, and let $G_m=(\sc{P_mK_k}{P_mK_l}_{\mathcal{H}})_{k,l=1}^M$. Since $(e_j)_{j\geq 1}$ is an orthonormal basis of $\mathcal{H}$, $G_m$ tends (e.g., ~in operator norm) to $G$ as $m\rightarrow \infty$. Since $G$ is non-singular, we deduce that $G_m$ is non-singular for all $m$ large enough (which we assume from now on). Hence, for $(h_k)_{k=1}^M\in H^M$, we have that
        \begin{align}\label{eq:inverse:image:approximate}
        f=\sum_{k,l=1}^M (G_m)^{-1}_{k,l} P_mK_kh_l\in H_{X_m}\quad\text{satisfies}\quad Lf=(h_k)_{k=1}^M,
       \end{align}
       where we also used that $P_mK_k\in V_m$ for all $1\leq k\leq M$.
       Hence, $H^M\subseteq H_{X_{K,m}}$ and therefore $H_{X_{K,m}}=H^M$. Moreover, combining \eqref{eq:inverse:image:approximate} with Proposition 4.1 in \cite{MR3024389} and the fact that $A P_m=P_mA$, we get from Lemma \ref{lem:RKHS:projected:SPDE}
       \begin{align*}
           &\norm{(h_k)_{k=1}^M}_{X_{K,m}}^2\leq \|\sum_{k,l=1}^M (G_m)^{-1}_{k,l} P_mK_kh_l\|_{X}^2\\
           &\leq 3\| P_m\sum_{k,l=1}^M (G_m)^{-1}_{k,l}A K_kh_l\|_{L^2([0,T];\mathcal{H})}^2+\| P_m\sum_{k,l=1}^M (G_m)^{-1}_{k,l}K_kh_l\|_{L^2([0,T];\mathcal{H})}^2\\
           &+2\| P_m\sum_{k,l=1}^M (G_m)^{-1}_{k,l}K_kh_l'\|_{L^2([0,T];\mathcal{H})}^2.
        \end{align*}
        Letting $m$ go to infinity, in which case $(G_m)^{-1}$ converges to $G^{-1}$, and so by definition of $G_{A}$, the last display becomes
        \begin{align*}
            &\limsup_{m\rightarrow\infty}\norm{(h_k)_{k=1}^M}_{X_{K,m}}^2\\
            &\leq 3\int_0^T \sum_{k,l=1}^M(G^{-1}G_{A} G^{-1})_{kl}h_k(t)h_l(t)\,dt+\int_0^T  \sum_{k,l=1}^M(G^{-1})_{kl}h_k(t)h_l(t)\,dt\\
            &+2\int_0^T  \sum_{k,l=1}^M(G^{-1})_{kl}h_k'(t)h_l'(t)\,dt.
        \end{align*}
        Using standard results for the operator norm of symmetric matrices yields thus for $\limsup_{m\rightarrow\infty}\norm{(h_k)_{k=1}^M}_{X_{K,m}}^2$ the upper bound claimed in the statement of the theorem to hold for $\norm{h}_{X_K}^2$. 
        
        Next, we use the above results to compute the RKHS of $X_K=(\sc{X}{K_k}_{\mathcal{H}})_{k=1}^M$. First, let us argue that the RKHS of a single measurement $\sc{X}{K_k}_{\mathcal{H}}$ (as a set) equals $H$. Combining Girsanov's theorem for the Itô process $\sc{X}{K_k}_{\mathcal{H}}$ in \eqref{eq:Ito} with Feldman-Hajek's theorem, the RKHS of $\sc{X}{K_k}_{\mathcal{H}}$ starting in zero is $H_\beta$. Adding an independent Gaussian random variable with variance greater zero, we obtain that in the stationary case $\sc{X}{K_k}_{\mathcal{H}}$ has RKHS $H=H_{\bar \beta}$ (see also the proof of Lemma \ref{lemma:RKHS_norm_OU_process}). Now, consider the case $M>1$. By Proposition 4.1 in \cite{MR3024389}, each coordinate projection maps the RKHS of $X_K$ to the RKHS of a single measurement, thus to $H$ by the first step.
        Hence, we have $H_{X_K}\subseteq H^M$. It remains to show the reverse inclusion $H^M=H_{X_{K,m}}\subseteq H_{X_K}$. To see this, note that $$\sc{X_m}{K_k}_{\mathcal{H}}=\sum_{j=1}^m \sc{K_k}{e_j}_{\mathcal{H}} Y_{j}\quad\text{and}\quad\sc{X}{K_k}_{\mathcal{H}}=\sum_{j=1}^\infty \sc{K_k}{e_j}_{\mathcal{H}} Y_{j},$$ so that $X_K=X_{K,m}+(X_K-X_{K,m})$ can be written as a sum of two independent processes taking values in the Hilbert space $L^2([0,T])^M$. Letting $C_{K}$ and $C_{K,m}$ be the covariance operators of $X_{K}$ and $X_{K,m}$, respectively, this implies that $C_K=C_{K,m}+\tilde C$ with $\tilde C$ self-adjoint and positive. Combining this with the characterisation of the RKHS norm in Proposition 2.6.8 of \cite{gine_mathematical_2016}, we get $\norm{(h_k)_{k=1}^M}^2_{X_{K}}\leq \norm{(h_k)_{k=1}^M}^2_{X_{K,m}}$ and $H_{X_{K,m}}\subseteq H_{X_K}$ for all $m\geq 1$. Finally, inserting the upper bound on $\norm{(h_k)_{k=1}^M}^2_{X_{K,m}}$ derived above, the proof is complete.
        \end{proof}

\paragraph*{Acknowledgements}
We are grateful for the helpful comments from three anonymous
referees. The research of AT and MW has been partially funded by the Deutsche Forschungsgemeinschaft (DFG)- Project-ID 318763901 - SFB1294. AT further acknowledges financial support of Carlsberg Foundation Young Researcher Fellowship grant CF20-0604. RA gratefully acknowledges support by the European Research Council, ERC grant agreement 647812 (UQMSI).

	\bibliographystyle{imsart-number} 
	\bibliography{refs}

\end{document}